\documentclass[11pt, reqno]{amsart}
\usepackage{enumitem}
\makeatletter
\newcommand{\mylabel}[2]{#2\def\@currentlabel{#2}\label{#1}}
\makeatother
\usepackage{array}
\usepackage[english]{babel}
\usepackage[top=1in, bottom=1in, left=1in, right=1in]{geometry}
\expandafter\let\csname ver@amsthm.sty\endcsname\relax

\usepackage{tikz}

\usepackage{amsmath}
\usepackage{amssymb}
\usepackage{mathdots}
\usepackage{mathtools}

\usepackage{dsfont}
\usepackage{chngpage}

\usepackage{hyperref}
\usepackage{amsthm}
\usepackage[capitalize,noabbrev]{cleveref}

\allowdisplaybreaks

\numberwithin{equation}{section}
\numberwithin{figure}{section}

\newtheorem{thm}{Theorem}[section]
\newtheorem{lemma}[thm]{Lemma}
\newtheorem{cor}[thm]{Corollary}

\newtheorem{Example}[thm]{Example}

\newtheorem{Remark}[thm]{Remark}
\newenvironment{remark}
  {\begin{Remark}\rm}{\hfill$\lozenge$\end{Remark}}
  
\crefname{thm}{Theorem}{Theorems}
\crefname{lemma}{Lemma}{Lemmas}
\crefname{cor}{Corollary}{Corollaries}
\crefname{prop}{Proposition}{Propositions}

\crefname{example}{Example}{Examples}
\crefname{remark}{Remark}{Remarks}

\newcommand{\emailhref}[1]{\email{\href{#1}{#1}}}
\newcommand{\M}{\operatorname{M}}
\newcommand{\wt}{\operatorname{wt}}

\title[Lozenge Tilings of Hexagons with Intrusions II: Shuffling Phenomenon]{Lozenge Tilings of Hexagons with Intrusions II:\\ Shuffling Phenomenon}

\author[Seok Hyun Byun]{Seok Hyun Byun}\emailhref{sbyun@amherst.edu}
\address{Department of Mathematics, Amherst College, Amherst, MA 01002, U.S.A.}

\author[Tri Lai]{Tri Lai}\emailhref{tlai3@unl.edu}
\address{Department of Mathematics, University of Nebraska – Lincoln, Lincoln, NE 68588, U.S.A.}
\thanks{T.L. was supported in part by Simons Collaboration Grant (\#585923).}

\begin{document}

\begin{abstract}
The enumeration of lozenge tilings of hexagons with holes has been studied intensively in recent years. Researchers tried to find shapes and positions of holes in hexagonal regions so that the number of lozenge tilings of the resulting regions is given by a simple product formula. In the present work, we consider new regions that are hybrids of regions studied by the first author (\textit{hexagons with intrusions}) and Ciucu (\textit{F-cored hexagons}). Then, we show that the tiling generating functions of these new regions under a certain weight are given by simple product formulas. To give a proof, we present \textit{shuffling theorems for lozenge tilings of hexagons with intrusions}, which give simple relations between the tiling generating functions of two related hexagonal regions with intrusions.
\end{abstract}

\maketitle

\section{Introduction}\label{sec:1}

A plane partition is one of the central objects in algebraic combinatorics, due to its connections with other important objects in combinatorics and other research fields like probability and statistical mechanics. There are many enumeration results on plane partitions that pose certain properties. Among them, one of the most significant results is MacMahon's Theorem on boxed plane partitions. There are many equivalent ways to state MacMahon's theorem, and here we recall a lozenge tiling version\footnote{A \textit{lozenge} is a union of two adjacent unit triangles that share an edge. A \textit{lozenge tiling} of a region is a collection of lozenges that covers the region without gaps and overlaps.} of it (this is due to the bijection of David and Tomei \cite{david1989problem}). The number of lozenge tilings of a hexagon with sides of length $a,b,c,a,b,$ and $c$ in cyclic order is given by
\begin{equation}\label{eaa}
    \prod_{i=1}^{a}\prod_{j=1}^{b}\prod_{k=1}^{c}\frac{i+j+k-1}{i+j+k-2}=\frac{H(a)H(b)H(c)H(a+b+c)}{H(a+b)H(b+c)H(c+a)},
\end{equation}
where\footnote{Throughout this paper, empty product is understood as $1$. Thus, $H(0)=1$.} $H(n)\coloneqq\prod_{i=0}^{n-1}i!$ for nonnegative integers $n$.

Motivated by this beautiful product formula, researchers have tried to generalize MacMahon's theorem to hexagonal regions with holes whose number of lozenge tilings is also given by a simple product formula. Some generalizations in this direction can be found in \cite{byun2022lozenge,byunlai2025lozenge,Ciucu2005PP1,ciucu2017other,CEKZ2001Core,CK2013dual,CL2019doubly,fulmek2023damaged,Lai2017boundaryfern,Lai2017threedents,Lai2020centralholedent,Lai2022offcentral,lai2019shamrockaxis,rosengren2016selberg}. Among them, we recall two generalizations: one is the result of the first author, and the other is the result of Ciucu. Consider a triangular lattice oriented so that one family of lattice lines is vertical. In \cite{byun2022lozenge}, the first author considered a centrally symmetric hexagon of sides of length $a,b,c,a,b,$ and $c$ clockwise from the left. Then, on the perpendicular bisector of the left side of length $a$, the first author removed an even number of left-aligned unit triangles (the collection of these deleted unit triangles was called \textit{an intrusion}) from the hexagon and showed that the number of lozenge tilings of the resulting regions is given by a simple product formula (see the left picture in Figure \ref{faa}). On the other hand, in \cite{ciucu2017other}, Ciucu considered a structure called \textit{a fern}, which is a string of triangles of arbitrary sizes that alternate orientations, touch at corners, and are lined up along an axis. Then he removed a fern from a hexagonal region (he called the region \textit{F-cored hexagon}) and showed that if we place a fern hole at a specific position, then the number of lozenge tilings of the resulting region is given by a simple product formula (see the right picture in Figure \ref{faa}). The motivation of this paper is the following question.\\

\begin{figure}
    \centering
    \includegraphics[width=0.7\textwidth]{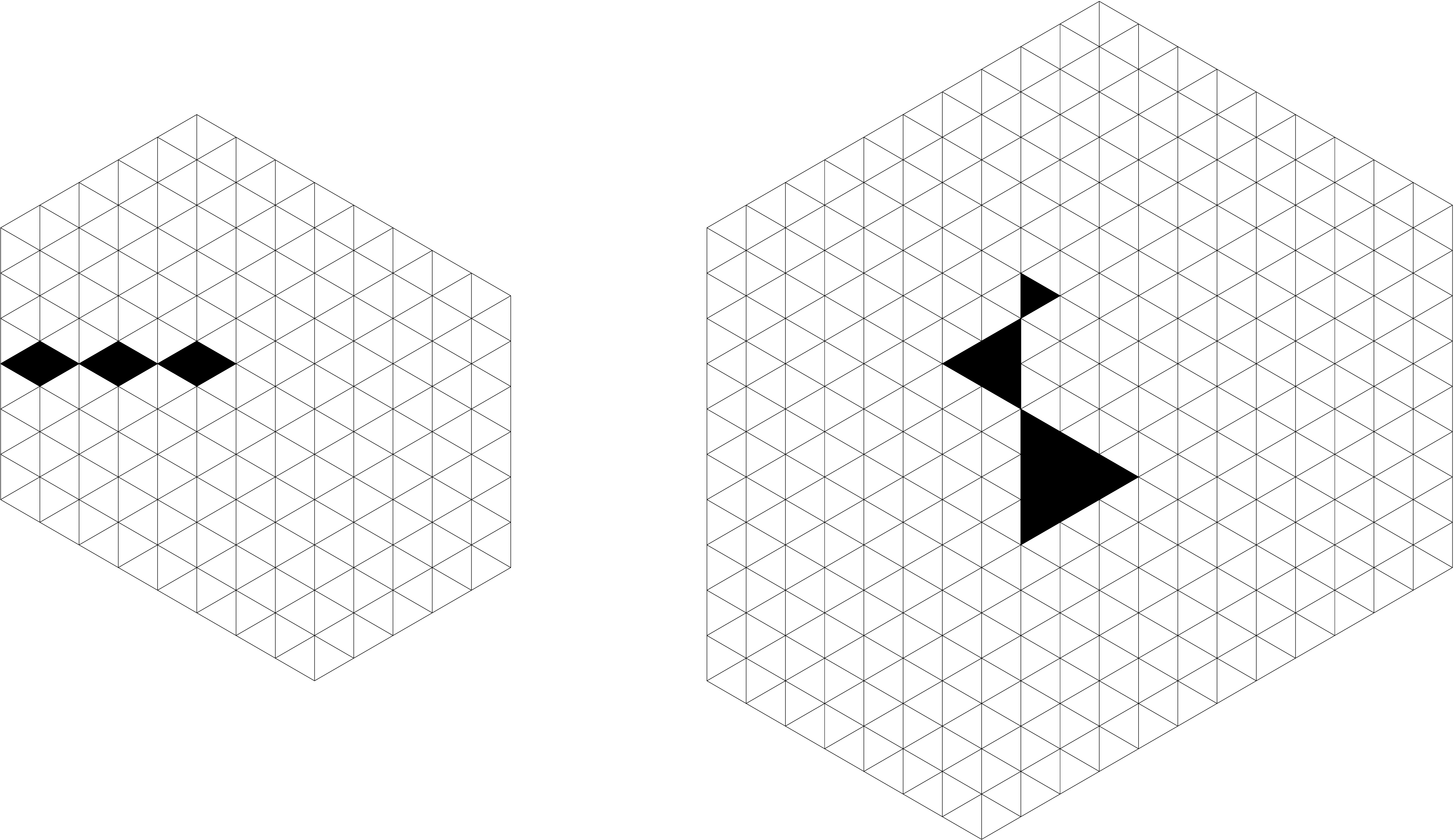}
    \caption{Examples of a hexagon with an intrusion (left) and a hexagon with a fern removed from its center (right). The numbers of lozenge tilings of these regions are given by simple product formulas.}
    \label{faa}
\end{figure}

\textit{Can we make both an intrusion and a fern hole from a hexagonal region so that the number of lozenge tilings of the resulting region is given by a simple product formula?}\\

There are several ways to remove these two structures from the hexagonal region. One possible way is to make an intrusion in the hexagon first and then place the fern at the end of the intrusion, as shown in the left picture in Figure \ref{fab}. Unfortunately, numerical data suggested that the number of lozenge tilings of the resulting regions is not given by a simple product formula in general, as the prime factorizations of their tiling numbers contain large prime factors in many cases. However, if we place the intrusion on the perpendicular bisector of one side (as the first author did in \cite{byun2022lozenge}) and if we remove a \textit{symmetric fern}\footnote{A fern is \textit{symmetric} if it is symmetric about the perpendicular bisector of the line segment whose end points are the two extreme points of the fern on the axis} at the end of the intrusion (see the right picture in Figure \ref{fab} for an example), then we observed that the prime factorizations of the number of lozenge tilings of the resulting regions always consist of small prime numbers, which is a good indication of the existence of product formulas. We also observed that if we assign a certain weight to the lozenges in the same region, the tiling generating function of the region under that weight also seemed to be fully factorized.

\begin{figure}
    \centering
    \includegraphics[width=0.7\textwidth]{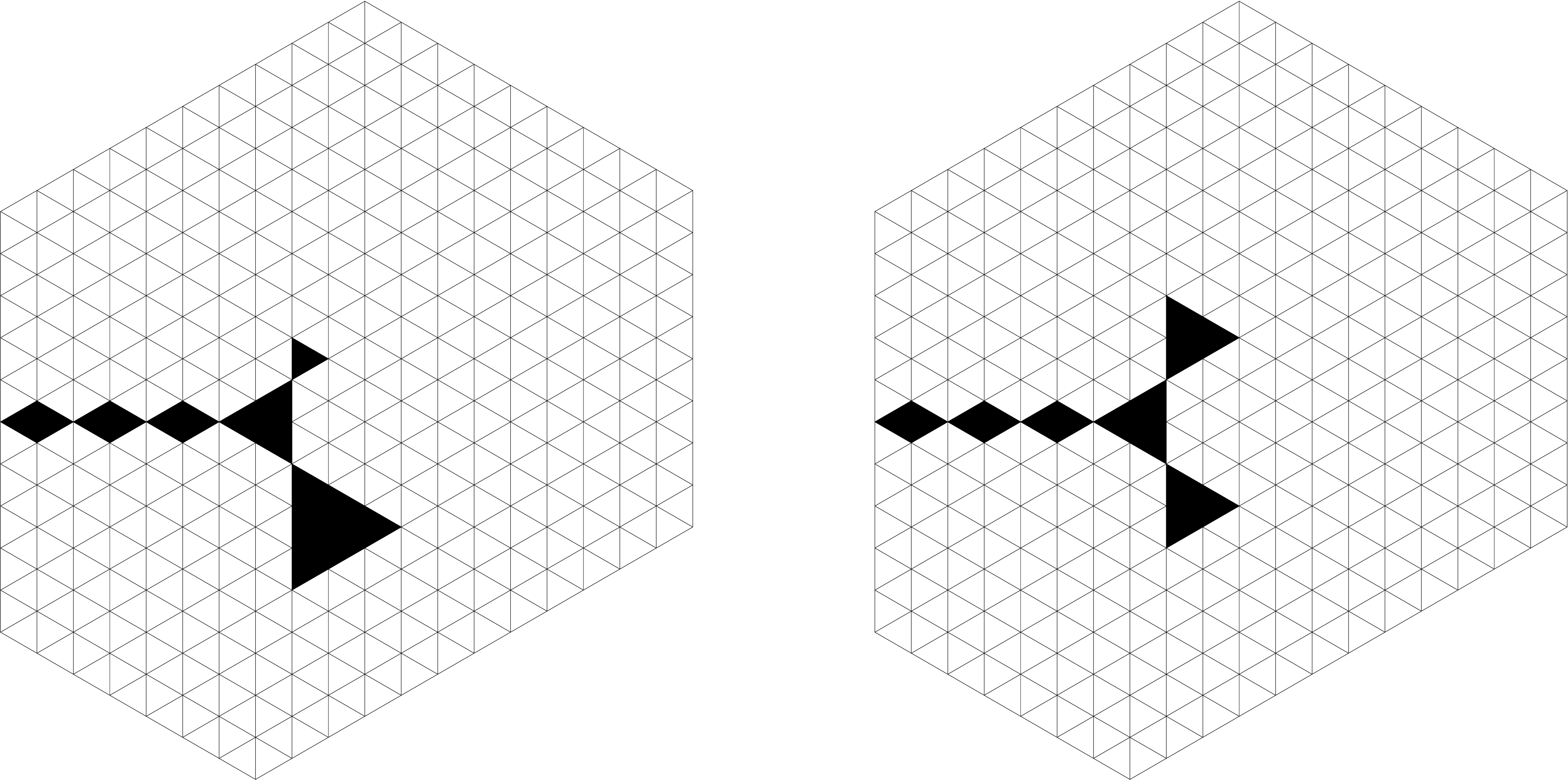}
    \caption{Examples of a hexagon with an intrusion and an asymmetric fern removed (left) and a hexagon with an intrusion and a symmetric fern removed (right).}
    \label{fab}
\end{figure}

Once we made this observation, we tried to find a way to conjecture formulas for the number of lozenge tilings (or tiling generating functions) of these new regions. The problem we encounter is that these regions have so many parameters, so it is almost impossible to guess the correct formulas. Since we could not guess the formulas, we were not able to prove this observation using an inductive argument based on Kuo's graphical condensation method \cite{kuo2004applications}, which has been widely used in recent developments in the field of tiling enumerations. To overcome this, we used the idea of the second author and Rohatgi from \cite{lai2019shuffling}. In that paper, the authors came up with the \textit{shuffling theorem for lozenge tilings of doubly-dented hexagons}, which gave a simple proof of Ciucu's result on the \textit{F-cored hexagon} mentioned earlier. Using a similar idea, we could come up with \textit{shuffling theorems for lozenge tilings of hexagons with intrusions} (Theorems \ref{tca} and \ref{tcb}). For the proof of the theorems, we use the idea that was used independently by the first author \cite{byun2022shuffling} and Fulmek \cite{fulmek2021shuffling} to give a simple proof of the original shuffling theorem in \cite{lai2019shuffling}. Once we prove these new shuffling theorems, we could prove our observations, and this is presented in Section \ref{sec:3}.

This paper is organized as follows. In Section \ref{sec:2}, we define five families of regions and their tiling generating functions. Each of these five regions is obtained from a hexagonal region by deleting an intrusion and a symmetric fern. In Section \ref{sec:3}, we present shuffling theorems for lozenge tilings of hexagons with intrusions (Theorems \ref{tca} and \ref{tcb}). Then, we state the exact enumeration results for the tiling generating functions of the five regions introduced in the previous section using the shuffling theorems (Theorem \ref{tcd}). In Section \ref{sec:4}, we recall some results that we utilize in the proof of Theorems \ref{tca} and \ref{tcb}. In Section \ref{sec:5}, we give a proof of Theorems \ref{tca} and \ref{tcb}. For the completeness of the paper, we also add a proof of Lemma \ref{tda} in Appendix \ref{sec:App} at the end of the paper.

\section{Five hexagonal regions with intrusions and symmetric ferns removed and their tiling generating functions}\label{sec:2}

In this section, we introduce five families of regions. As mentioned in the previous section, each region is obtained from a hexagonal region by deleting an intrusion and a symmetric fern. The enumeration formulas depend on the parity of the size of the central triangle of the fern and its orientation, and this is why we consider several families of regions. Before we introduce the regions, we first define intrusions and symmetric ferns in detail.

\begin{figure}
    \centering
    \includegraphics[width=0.7\textwidth]{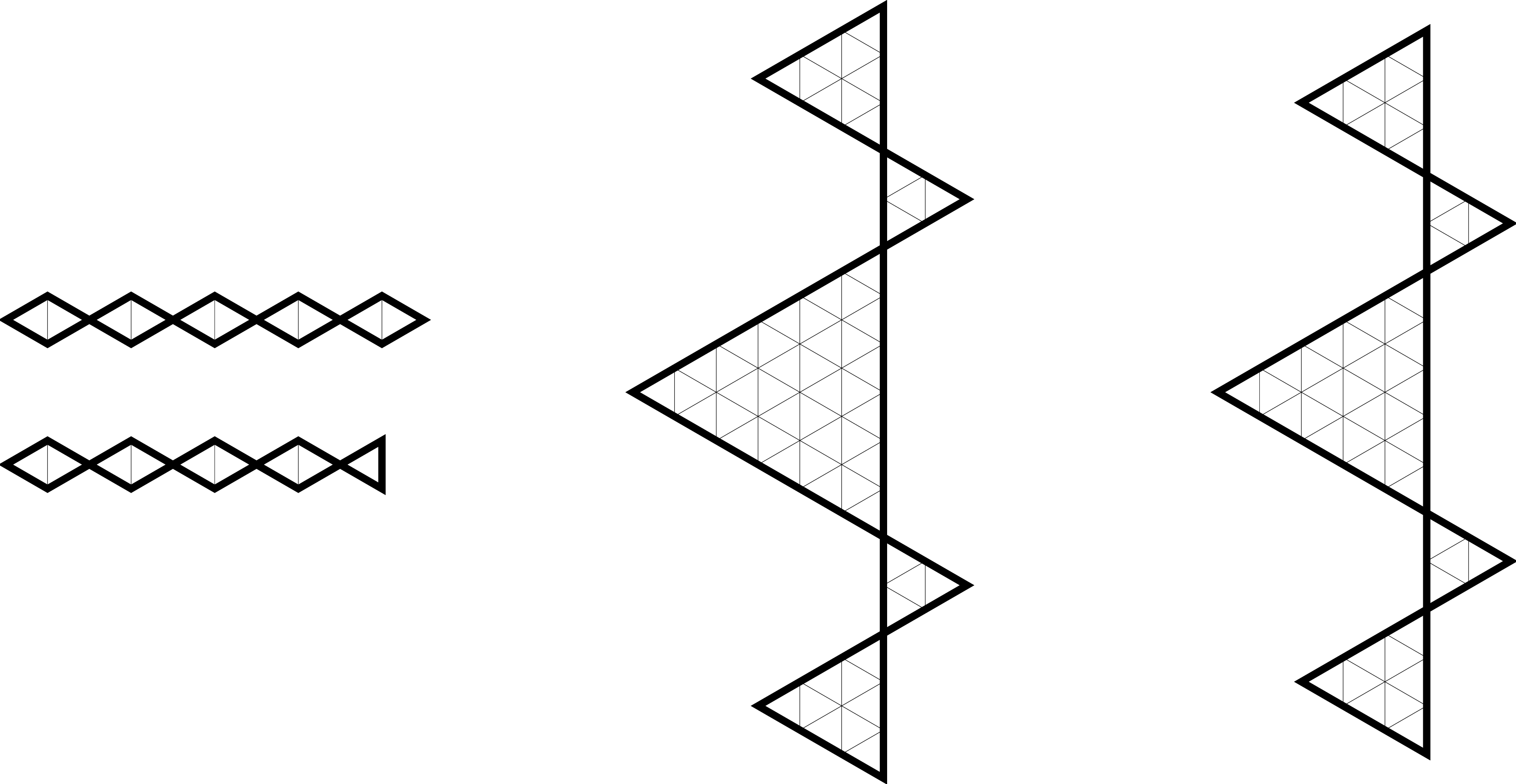}
    \caption{Intrusions of length $10$ and $9$ (top left and bottom left) and symmetric ferns $F_{sym}(6,2,3)$ (center) and $F_{sym}(5,2,3)$ (right).}
    \label{fba}
\end{figure}

For a nonnegative integer $n$, an \textit{intrusion of length $n$} is a collection of $n$ unit triangles as presented in the left pictures in Figure \ref{fba}. As one can see from the pictures, the orientation of the rightmost unit triangle depends on the parity of $n$. For a nonnegative integer $k$, positive integers $a_{1},\ldots,a_{2k}$ and a nonnegative integer $a_{2k+1}$, we define a \textit{symmetric fern $F_{sym}(a_{1},a_{2},\ldots,a_{2k+1})$} as a string of triangles of size $a_{2k+1},\ldots,a_{2},a_{1},a_{2},\ldots$, and $a_{2k+1}$ one after another that alternate orientations, touch at corners, and are lined up along an axis (we call this axis a \textit{fern axis}). See the two pictures on the right in Figure \ref{fba}. The central triangle of size $a_{1}$ of the symmetric fern $F_{sym}(a_{1},a_{2},\ldots,a_{2k+1})$ is a \textit{core} of the symmetric fern, and we assume that it is left-pointing. The condition $a_{2k+1}\in\mathbb{Z}_{\geq0}$ allows the symmetric fern to have freedom on the orientation of the furthest triangle from the core: it has the same orientation as the core if $a_{2k+1}>0$ and has a different orientation as the core if $a_{2k+1}=0$. Note that when $k=0$, the symmetric fern $F_{sym}(a_{1})$ is a triangle of size $a_1$. The mirror images of $F_{sym}(a_{1},a_{2},\ldots,a_{2k+1})$ across a vertical axis is a \textit{flipped symmetric fern} and denoted by $\overline{F}_{sym}(a_{1},a_{2},\ldots,a_{2k+1})$.

Consider $x,y,z,w,k\in\mathbb{Z}_{\geq0}$, $a_1\ldots,a_{2k}\in\mathbb{Z}_{>0}$, and $a_{2k+1}\in\mathbb{Z}_{\geq0}$. We further assume that $z\geq w$ without loss of generality. We also define $a_{o}\coloneqq\sum_{i=0}^{k}a_{2i+1}$, $a_{e}\coloneqq\sum_{i=1}^{k}a_{2i}$, and $a\coloneqq a_{o}+a_{e}=\sum_{i=1}^{2k+1}a_{i}$. We now construct five families of regions $A(x,y,z,w;a_1,\ldots,a_{2k+1})$,$\ldots$, $E(x,y,z,w;a_1,\ldots,a_{2k+1})$. As will be seen in the definitions,
\begin{itemize}
    \item $y$ determines the length of the intrusion,
    \item $a_1,\ldots,a_{2k+1}$ determine the shape of the (flipped) symmetric fern, and 
    \item $x,z,w$ determine the lengths of the sides of the boundary hexagon.
\end{itemize}

\begin{figure}
    \centering
    \includegraphics[width=0.84\textwidth]{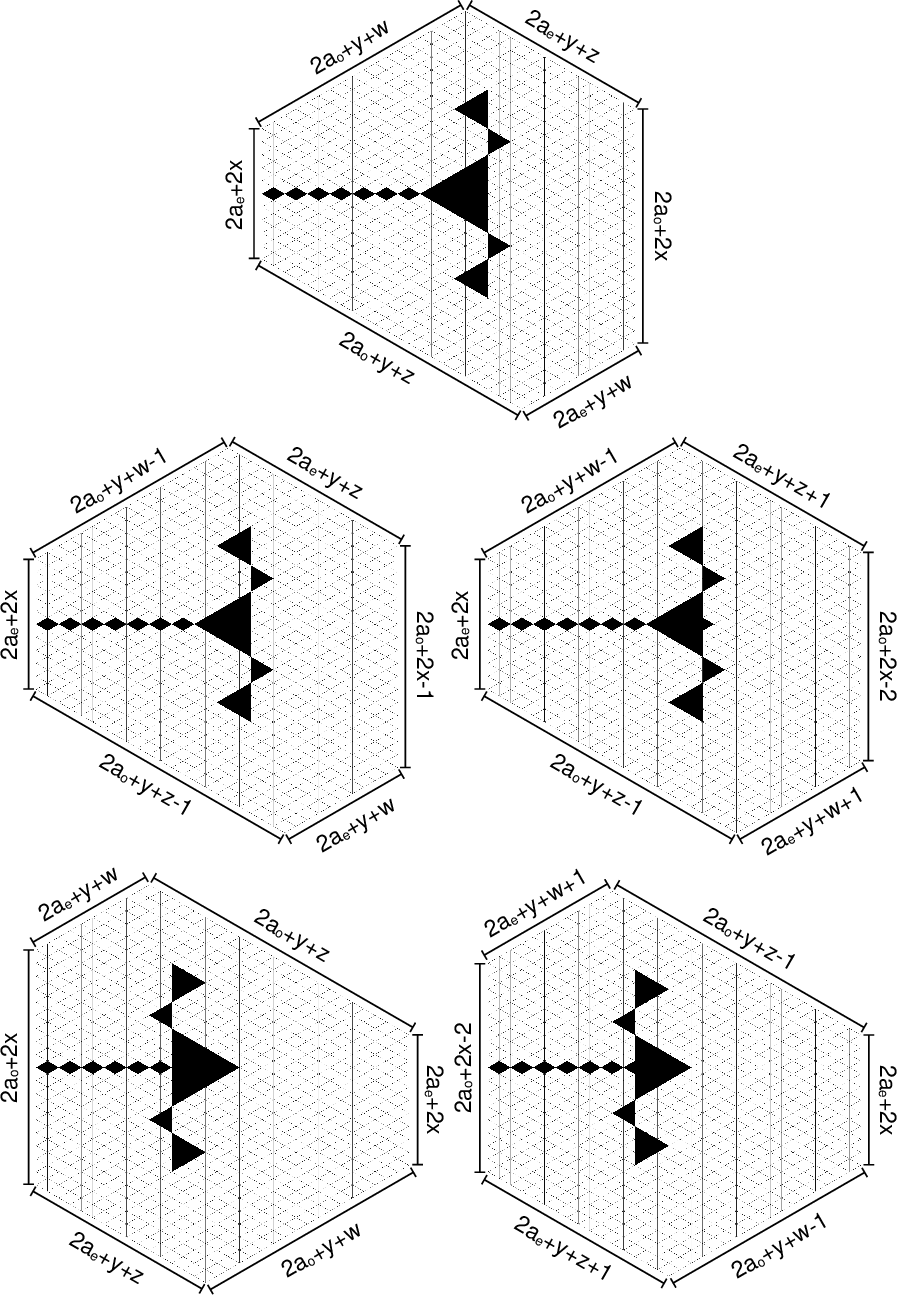}
    \caption{Five regions $A(x,y,z,w;a_1,a_2,a_3)$ (top), $B(x,y,z,w;a_1,a_2,a_3)$ (middle left), $C(x,y,z,w;a_1,a_2,a_3)$ (middle right), $D(x,y,z,w;a_1,a_2,a_3)$ (bottom left), and $E(x,y,z,w;a_1,a_2,a_3)$ (bottom right), where $x=3$, $y=4$, $z=7$, $w=2$, $a_1=3$, $a_2=2$, and $a_3=3$ (thus $a_o=a_1+a_3=6$ and $a_e=a_2=2$).}
    \label{fbb}
\end{figure}

We first define $A(x,y,z,w;a_1,\ldots,a_{2k+1})$. Consider a hexagon with sides of length $2a_{e}+2x$, $2a_{o}+y+w$, $2a_{e}+y+z$, $2a_{o}+2x$, $2a_{e}+y+w$, and $2a_{o}+y+z$ clockwise from the left side. From this hexagon, consider the perpendicular bisector of the left side and delete a left-aligned intrusion of length $2(a_{o}-a_1)+2y$ on it. Then, we further delete a symmetric fern $F_{sym}(2a_1,a_2,\ldots,a_{2k+1})$ from the resulting region so that the vertex of the core not on the fern axis touches the rightmost vertex of the intrusion. For example, see the picture at the top in Figure \ref{fbb}. Notice that the first parameter in $F_{sym}(2a_1,a_2,\ldots,a_{2k+1})$ is $2a_1$, not $a_1$. This means that the central triangle in the removed fern has an even length. There will be analogous parity conditions on the lengths of the cores of the removed ferns in the other four regions.

To define $B(x,y,z,w;a_1,\ldots,a_{2k+1})$, we consider a hexagon with sides of length $2a_{e}+2x$, $2a_{o}+y+w-1$, $2a_{e}+y+z$, $2a_{o}+2x-1$, $2a_{e}+y+w$, and $2a_{o}+y+z-1$ clockwise from the left side. From this hexagon, consider the perpendicular bisector of the left side and delete a left-aligned intrusion of length $2(a_{o}-a_1)+2y$ on it. Then, we further delete a symmetric fern $F_{sym}(2a_1-1,a_2,\ldots,a_{2k+1})$ so that the intrusion and the symmetric fern touch as in the construction of $A(x,y,z,w;a_1,\ldots,a_{2k+1})$. For example, see the left picture in the middle of Figure \ref{fbb}.

The construction of $C(x,y,z,w;a_1,\ldots,a_{2k+1})$ is very similar to that of $B(x,y,z,w;a_1,\ldots,a_{2k+1})$: we first consider a hexagon with sides of length $2a_{e}+2x$, $2a_{o}+y+w-1$, $2a_{e}+y+z+1$, $2a_{o}+2x-2$, $2a_{e}+y+w+1$, and $2a_{o}+y+z-1$ clockwise from the left side. We then delete the same intrusion of length $2(a_{o}-a_1)+2y$ and the same symmetric fern $F_{sym}(2a_1-1,a_2,\ldots,a_{2k+1})$ from the perpendicular bisector of the left side the same way as we construct $B(x,y,z,w;a_1,\ldots,a_{2k+1})$. As an additional step, we delete a right-pointing unit triangle that is on the perpendicular bisector of the left side and shares an edge with the core of the fern. For example, see the right picture in the middle of Figure \ref{fbb}.

To construct $D(x,y,z,w;a_1,\ldots,a_{2k+1})$, we consider a hexagon with sides of length $2a_{o}+2x$, $2a_{e}+y+w$, $2a_{o}+y+z$, $2a_{e}+2x$, $2a_{o}+y+w$, and $2a_{e}+y+z$ clockwise from the left side. We then delete an intrusion of length $2a_{e}+2y$ and a flipped symmetric fern $\overline{F}_{sym}(2a_{1},a_{2},\ldots,a_{2k+1})$ as shown in the left picture at the bottom of Figure \ref{fbb}. In particular, the rightmost vertex of the intrusion touches the midpoint of the side of the core of the fern that is on the fern axis.

Lastly, we define $E(x,y,z,w;a_1,\ldots,a_{2k+1})$. We first consider a hexagon with sides of length $2a_{o}+2x-2$, $2a_{e}+y+w+1$, $2a_{o}+y+z-1$, $2a_{e}+2x$, $2a_{o}+y+w-1$, and $2a_{e}+y+z+1$ clockwise from the left. We then delete an intrusion of length $2a_{e}+2y+1$ and a flipped symmetric fern $\overline{F}_{sym}(2a_{1}-1,a_{2},\ldots,a_{2k+1})$ as shown in the right picture at the bottom in Figure \ref{fbb}. In particular, the midpoint of the rightmost vertical side of the intrusion matches the midpoint of the side of the core of the fern that is on the fern axis.

\begin{figure}
    \centering
    \includegraphics[width=0.80\textwidth]{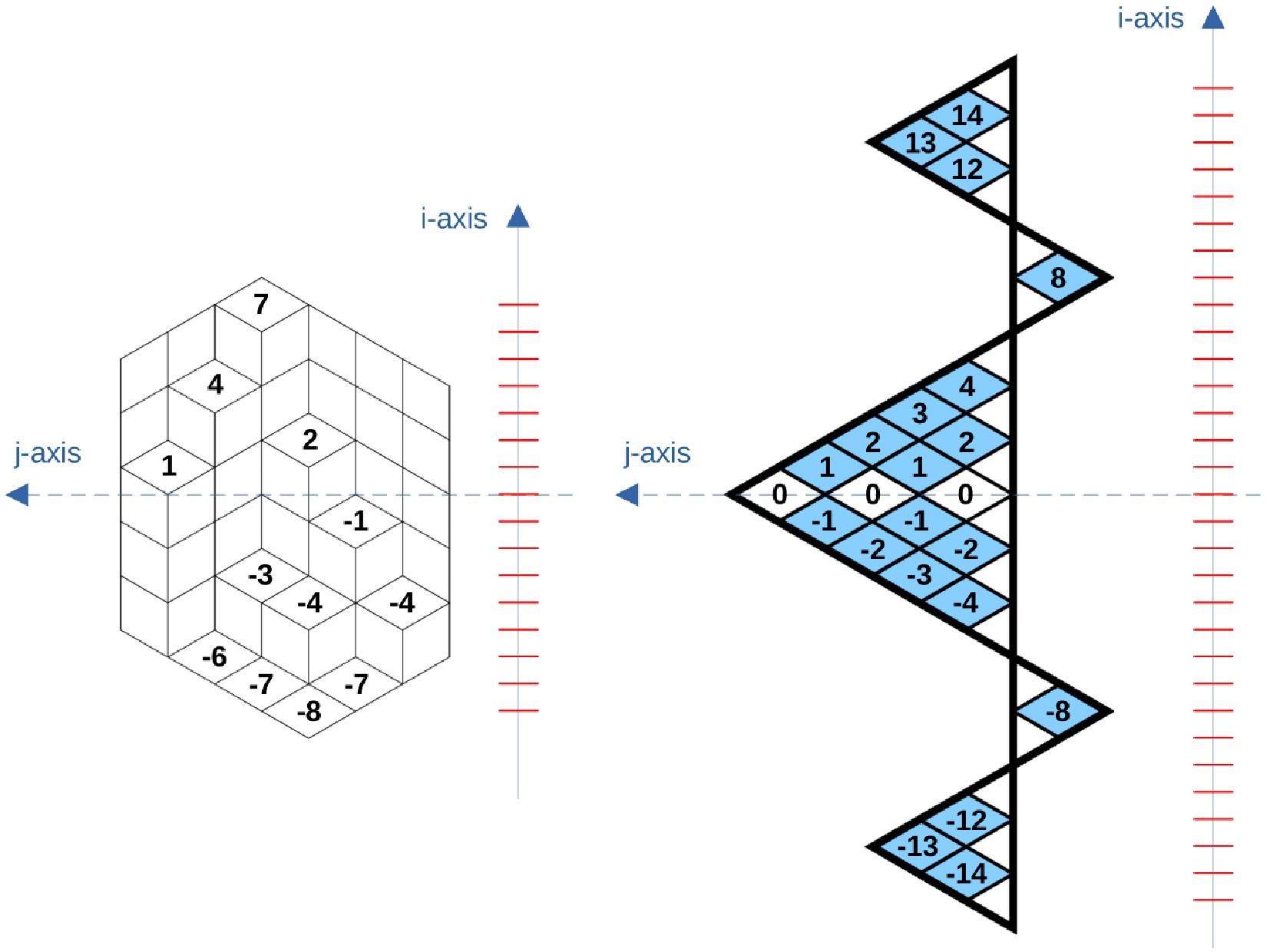}
    \caption{A lozenge tiling of a hexagonal region on the $(i,j)$-coordinate system (left) and the symmetric fern $F_{sym}(6,2,3)$ on the $(i,j)$-coordinate system (right). In both pictures, the $i$-coordinates of the centers of horizontal lozenges are marked. If a horizontal lozenge is marked by $n$, then it is weighted by $\frac{q^{n}+q^{-n}}{2}$ and every lozenge with no mark is weighted by $1$.}
    \label{fbc}
\end{figure}

Every hexagonal region in this paper will be considered as if it were placed on the $(i,j)$-coordinate system, such that 
\begin{itemize}
    \item the perpendicular bisector of the left side of the hexagon is on the $j$ axis and
    \item the half of the side length of unit triangles is the unit length of the $i$-axis.
\end{itemize}

We then assign weights to lozenges in hexagonal regions, according to their orientation and the $i$-coordinate of the center of the lozenges. According to the slope of their long diagonal, there are three types of lozenges. We call them \emph{positive}, \emph{negative}, or \emph{horizontal lozenges} if their long diagonals have positive, negative, or horizontal slope, respectively. The way we assign weight to lozenges is as follows. We assign weight $1$ to every positive and negative lozenge. For a horizontal lozenge, if its center has $i$-coordinate $n$, then we assign a weight $\frac{q^{n}+q^{-n}}{2}$ to the lozenge. See the left picture in Figure \ref{fbc} that illustrates this weight assignment. Under this weight assignment, the \textit{weight of a lozenge tiling} of a region is the product of the weights of all lozenges that constitute the tiling. The \textit{tiling generating function of a region $R$} is the sum of weights of all lozenge tilings of $R$, and we denote it by $\M_{q}(R)$.

We also define a \textit{$q$-weight} of a symmetric fern $F_{sym}(a_{1},a_{2},\ldots,a_{2k+1})$, which we denote it by $\wt(F_{sym}(a_{1},a_{2},\ldots,a_{2k+1}))$. To do that, we place the symmetric fern on the $(i,j)$-coordinate system so that the symmetric fern is symmetric across the $j$-axis. Then, the weight of the symmetric fern is the product of the weights of all horizontal lozenges contained in the symmetric fern, where lozenges are weighted as explained in the previous paragraph (see the right picture in Figure \ref{fbc}). Since every horizontal lozenge on the $j$-axis has weight $\frac{q^{0}+q^{0}}{2}=1$, the weight of a symmetric fern can also be thought of as the product of all horizontal lozenges in the symmetric fern that are not on the $j$-axis (see the shaded horizontal lozenges in the right picture in Figure \ref{fbc}. This observation will be used later). The weight of a flipped symmetric fern $\overline{F}_{sym}(a_{1},a_{2},\ldots,a_{2k+1})$, denoted by $\wt(\overline{F}_{sym}(a_{1},a_{2},\ldots,a_{2k+1}))$, is defined in an exactly the same way. In particular, $\wt(F_{sym}(a_{1},a_{2},\ldots,a_{2k+1}))=\wt(\overline{F}_{sym}(a_{1},a_{2},\ldots,a_{2k+1}))$.

We further define two sets $P_o$ and $P_e$ associated to the symmetric fern $F_{sym}(a_{1},a_{2},\ldots,a_{2k+1})$ and the flipped symmetric fern $\overline{F}_{sym}(a_{1},a_{2},\ldots,a_{2k+1})$. $P_{o}$ encodes the $i$-coordinates of the centers of the unit triangles along the fern axis that are contained in the triangles of size $a_{1}, a_{3},\ldots, a_{2k+1}$. Similarly, $P_{e}$ encodes the $i$-coordinates of the centers of the unit triangles along the fern axis that are contained in the triangles of size $a_{2}, a_{4},\ldots, a_{2k}$. 
For any set of real numbers $R$, we say $R$ is \textit{symmetric} if $R=-R\coloneqq\{-r|r\in R\}$. Note that these two sets $P_{o}$ and $P_{e}$ are symmetric (i.e. $P_{o}=-P_{o}$ and $P_{e}=-P_{e}$) because $j$-axis cut through the symmetry axis of the symmetric fern. Furthermore, the two sets consist of odd integers (or even integers) if $a_1$ is even (or odd). For example, the sets corresponding to the symmetric fern on the right picture in Figure \ref{fbc} are $P_{o}=\{-15,-13,-11,-5,-3,-1,1,3,5,11,13,15\}$ and $P_{e}=\{-9,-7,7,9\}$. We then define\footnote{Strictly speaking, the sets $P_{o}$, $P_{e}$, $Q_{o}$, and $Q_{e}$ depend on the symmetric fern $F_{sym}(a_{1},a_{2},\ldots,a_{2k+1})$ (or the flipped symmetric fern $\overline{F}_{sym}(a_{1},a_{2},\ldots,a_{2k+1})$), and thus depend on the parameters $a_{1},a_{2},\ldots,a_{2k+1}$. Therefore, it would make more sense to denote them by $P_{o}(a_{1},a_{2},\ldots,a_{2k+1})$, $P_{e}(a_{1},a_{2},\ldots,a_{2k+1})$, $Q_{o}(a_{1},a_{2},\ldots,a_{2k+1})$, and $Q_{e}(a_{1},a_{2},\ldots,a_{2k+1})$ instead of $P_{o}$, $P_{e}$, $Q_{o}$, and $Q_{e}$, respectively. However, these notations are too long, so we use brief notations $P_{o}$, $P_{e}$, $Q_{o}$, and $Q_{e}$, and keep in mind that they depend on the (flipped) symmetric fern.} $Q_{o}\coloneqq\frac{1}{2}P_{o}=\{\frac{1}{2}p~|~p\in P_{o}\}$ and $Q_{e}\coloneqq\frac{1}{2}P_{e}=\{\frac{1}{2}p~|~p\in P_{e}\}$.

The main enumeration result of this paper (Theorem \ref{tcd}) gives explicit product formulas for the ratios between one of $\M_{q}(A(x,y,z,w;a_1,\ldots,a_{2k+1})),\ldots, \M_{q}(E(x,y,z,w;a_1,\ldots,a_{2k+1}))$ and one of $\M_{q}(A(x,y,z,w;a)), \M_{q}(B(x,y,z,w;a)), \M_{q}(C(x,y,z,w;a))$ up to a multiplicative factor independent from $x,y,z,w$ (see Equations \eqref{ece}-\eqref{eci}). We state these results in Section \ref{sec:3} because their formulas require results and notations from Theorems \ref{tca} and \ref{tcb}.

As a consequence of Theorem \ref{tcd}, we can deduce the tiling generating functions of the regions $A(x,y,z,w;a_1,\ldots,a_{2k+1}),\ldots, E(x,y,z,w;a_1,\ldots,a_{2k+1})$ using the previous results of the authors in \cite{byunlai2025lozenge}. This is because the regions $A(x,y,z,w;a)$, $B(x,y,z,w;a)$, and $C(x,y,z,w;a)$ have the same tiling generating functions as the regions\footnote{In all three regions, the sequence ``$0,\ldots,0$" consists of $y-1$ copies of $0$s.} $H_{e}(x,w,z,y;0,\ldots,0,a)$, $H_{o}(x,w,z,y;0,\ldots,0,a-1)$, and $H_{e}(x,w,z,y+1;0,\ldots,0,a-1,0)$ appearing in \cite{byunlai2025lozenge}, respectively (see \cite{byunlai2025lozenge} for the detailed definition of these regions). This can be easily checked as follows. For example, observe the leftmost strip of $A(x,y,z,w;a)$ (see the picture on the top in Figure \ref{fbd}). One can see that the leftmost strip of the region is uniquely tiled by lozenges that have weight $1$ (this is because they are not horizontal. See the shaded lozenges). Thus, deleting the leftmost strip from $A(x,y,z,w;a)$ does not change the tiling generating function, and the resulting region is precisely the region $H_{e}(x,w,z,y;0,\ldots,0,a)$ whose tiling generating function is obtained in \cite{byunlai2025lozenge}. Similarly, by deleting the leftmost strips from $B(x,y,z,w;a)$ and $C(x,y,z,w;a)$, one gets $H_{o}(x,w,z,y;0,\ldots,0,a-1)$ and $H_{e}(x,w,z,y+1;0,\ldots,0,a-1,0)$ whose tiling generating functions are presented in \cite{byunlai2025lozenge}, respectively. This explains why $\M_{q}(A(x,y,z,w;a))$, $\M_{q}(B(x,y,z,w;a)$, and $\M_{q}(C(x,y,z,w;a))$ are already known by the results in \cite{byunlai2025lozenge}.

\begin{figure}
    \centering
        \includegraphics[width=0.9\textwidth]{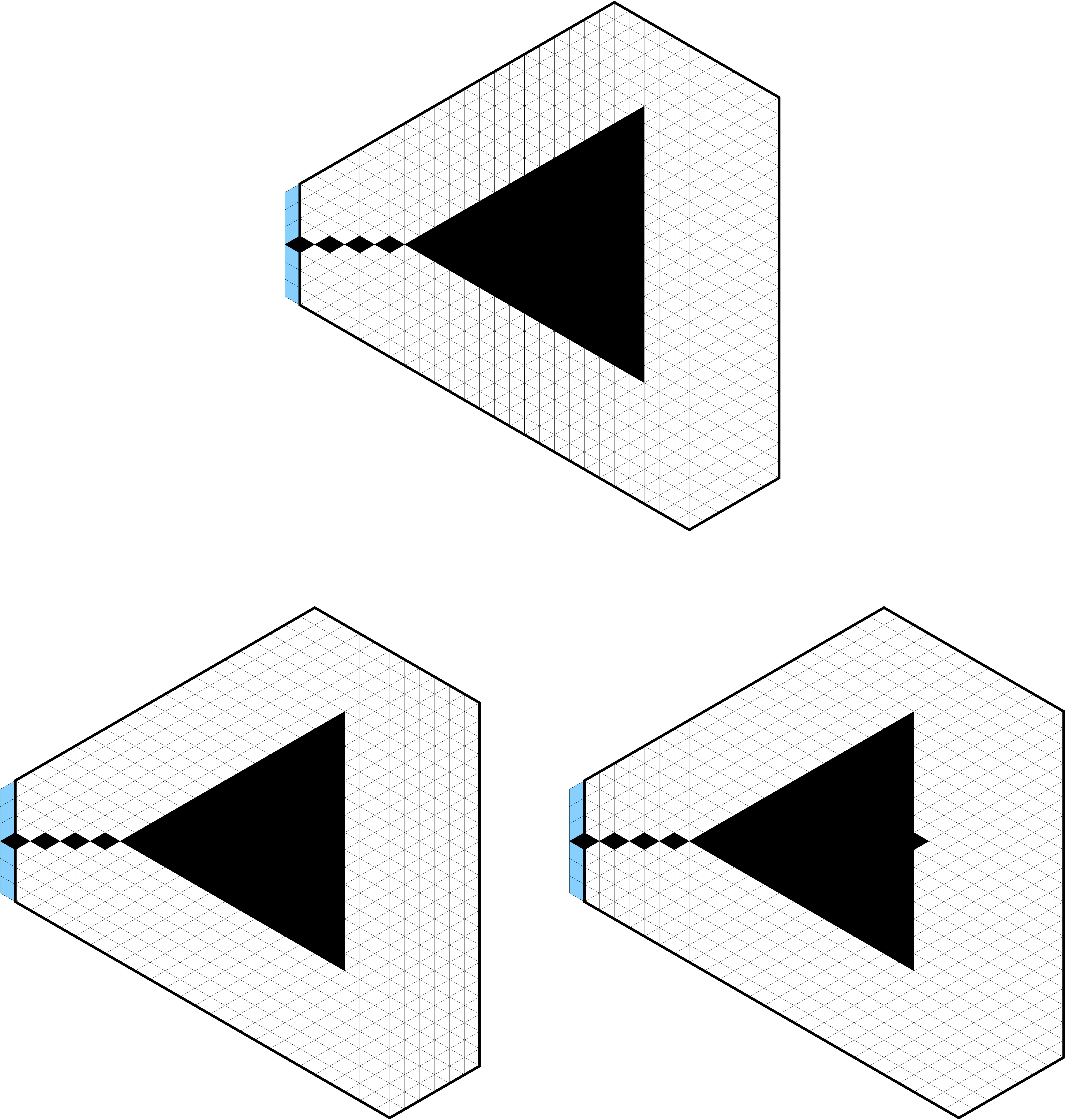}
    \caption{$A(3,4,7,2;8)$ (top), $B(3,4,7,2;8)$ (bottom left), $C(3,4,7,2;8)$ (bottom right). After removing the leftmost strip, they become $H_{e}(3,2,7,4;0,0,0,8)$, $H_{o}(3,2,7,4;0,0,0,7)$, and $H_{e}(3,2,7,5;0,0,0,7,0)$ of \cite{byunlai2025lozenge}, respectively.}
    \label{fbd}
\end{figure}

More precise statements of the theorems will be provided in Section \ref{sec:3}, after we present shuffling theorems for hexagons with intrusions in Theorems \ref{tca} and \ref{tcb}.

\section{Shuffling theorems for hexagons with intrusions}\label{sec:3}

In this section, we first state shuffling theorems for hexagons with intrusions (Theorems \ref{tca} and \ref{tcb}). The proof of these theorems is postponed to Section \ref{sec:5} as it requires some preliminary results, which will be presented in Section \ref{sec:4}. We then finish the section by stating the tiling generating functions results for the five regions introduced in the previous section (Theorems \ref{tcd}) and giving a detailed proof of one of them, as the proofs for the five regions are very similar.

\begin{figure}
    \centering
    \includegraphics[width=0.9\textwidth]{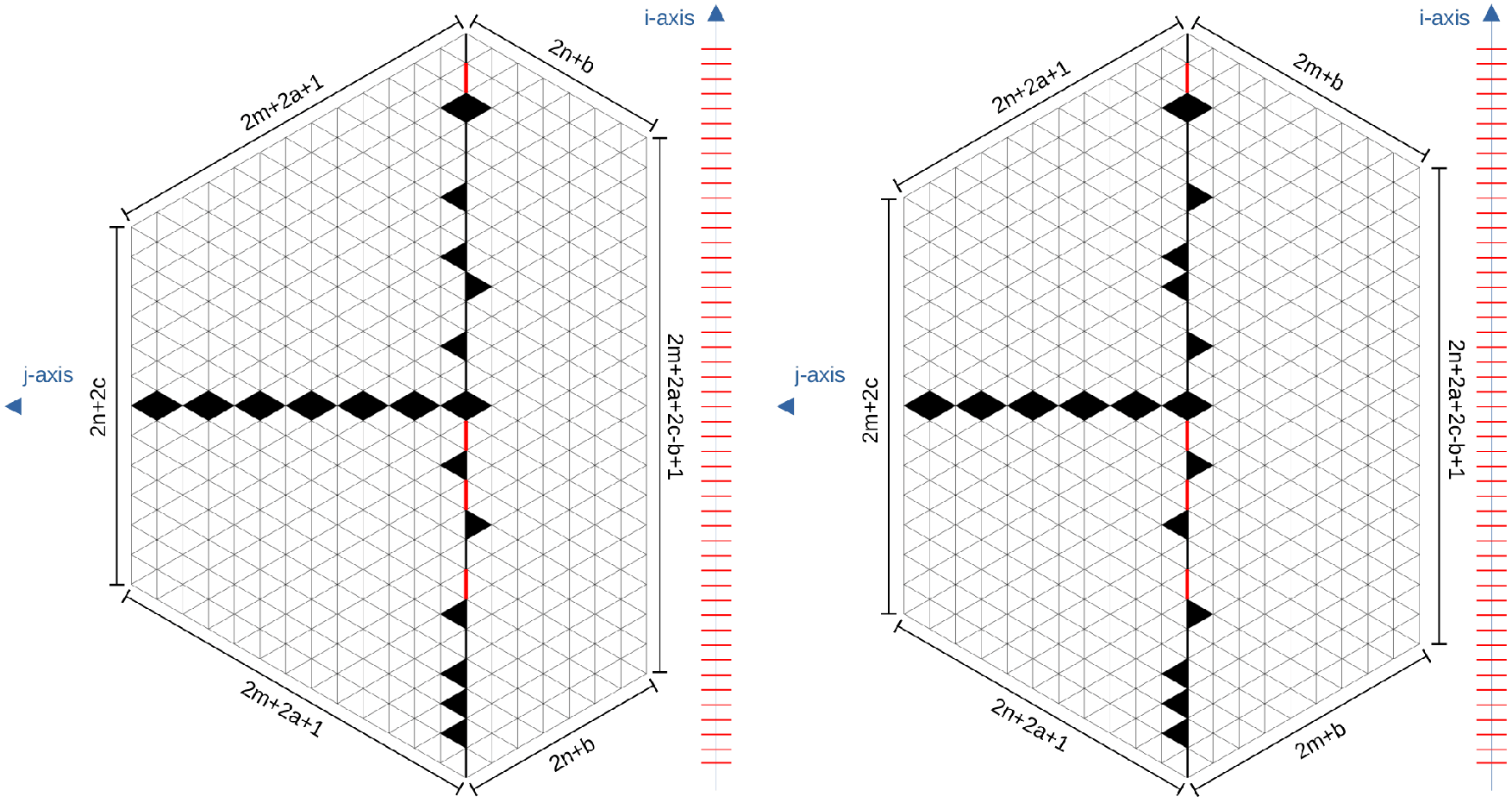}
    \caption{$H_{2,1,4,5,5}(\{-11, -10, -9, -7, -2, 2, 5, 7, 10\},\{-4, 0, 4, 10\},\{-6, -3, -1, 11\})$ (left) and $H_{1,2,4,5,5}(\{-11, -10, -9, -4, 4, 5, 10\},\{-7, -2, 0, 2, 7, 10\},\{-6, -3, -1, 11\})$ (right) on the $(i,j)$-planes. Positions of barriers are indicated by red color. The picture on the right is obtained from the left picture by flipping left-pointing unit triangles labeled by $-7,-2,2,7$ and right-pointing unit triangles labeled by $-4,4$.}
    \label{fca}
\end{figure}

For any nonnegative integers $a,b,c,m,$ and $n$ such that $2a+2c+1\geq b$, we consider a symmetric hexagon with sides of lengths $2n+2c$, $2m+2a+1$, $2n+b$, $2m+2a+2c-b+1$, $2n+b$, and $2m+2a+1$ clockwise from the left. Then we remove $2m+2a+1$ leftmost unit triangles on its horizontal symmetry axis (we are making an intrusion of length $2m+2a+1$). Note that this intrusion contacts the vertical diagonal of the hexagon. Label the unit segments on the diagonal by $-(m+n+a+c), -(m+n+a+c)+1,\ldots, m+n+a+c-1, m+n+a+c$ from the bottom to the top. Now we remove some left-pointing unit triangles and right-pointing unit triangles that share a side with the vertical diagonal. We give these unit triangles the same labels as their vertical sides. Let $L_1$ (and $R_1$) be the label set of the removed left-pointing (and right-pointing) unit triangles. Since the left-pointing unit triangle labeled by $0$ is part of the intrusion, note that $L_1$ cannot contain $0$. We will flip $2m$ left-pointing unit triangles and $2n$ right-pointing unit triangles through the vertical diagonal, subject to the restriction that if two deleted unit triangles form a unit lozenge-shaped hole, we do not flip either of them. Hence, we assume that $|L_1\setminus R_1|\geq 2m$ and $|R_1\setminus L_1|\geq 2n$. A unit segment on the triangular grid is called a \textit{barrier} if covering it by a lozenge is prohibited. We impose barriers on some unit segments along the vertical diagonal, and let $B$ denote the set of barrier labels. We denote the resulting region by $H_{m,n,a,b,c}(L_1, R_1, B)$ (see the left picture in Figure \ref{fca}). Since $H_{m,n,a,b,c}(L_1, R_1, B)$ and $H_{m,n,a,b,c}(L_1, R_1, B\setminus(L_1\cup R_1))$ have the same set of lozenge tilings, without loss of generality, we assume $B\cap (L_1\cup R_1)=\varnothing$.

Now, we flip $2m$ removed left-pointing unit triangles and $2n$ removed right-pointing unit triangles along the vertical diagonal in a \textit{symmetric way}. (This means that we flip a removed triangle with label $k$ and the same type of removed triangle with label $-k$ at the same time. Note that the index sets $L_1$ and $R_1$ do not have to be symmetric, although we are flipping unit triangles in a symmetric way.) An important part is that, when we flip these unit triangles, the boundary hexagon and the length of the intrusion will be changed accordingly.  Let $L_2$ (and $R_2$) be the set of the labels of the removed left-pointing (and right-pointing) unit triangles after flipping. Once flipping is done, we consider the region $H_{n,m,a,b,c}(L_2, R_2, B)$ (please compare the two pictures in Figure \ref{fca}). Note that the parameters $m$ and $n$ are interchanged, so the boundary hexagon and the length of the intrusion are modified accordingly.

We then consider the tiling generating functions of these two regions. To do that, we put these regions on the $(i,j)$-plane as described in the previous section. More precisely, we put the regions so that the $j$-axis cuts through the horizontal symmetry axis of the boundary hexagons (see two pictures in Figure \ref{fca}). Every lozenge is weighted according to its orientation and $i$-coordinate of the center, as explained in Section \ref{sec:2}. Let $\M_{q}(H_{m,n,a,b,c}(L_1, R_1, B))$ and $\M_{q}(H_{n,m,a,b,c}(L_2, R_2, B))$ be the tiling generating functions of the two regions under this weight assignment.

To state our result, we need to set some notations. Throughout this paper, we use the following $q$-analogue of positive integers $n$ and its factorial $n!$: $\langle n\rangle_q\coloneqq \frac{q^n-q^{-n}}{q-q^{-1}}$ and $\langle n\rangle_q!\coloneqq \langle 1\rangle_q\langle 2\rangle_q\cdots\langle n\rangle_q$. We also define $\langle n\rangle_q^+\coloneqq \frac{q^n+q^{-n}}{2}$ for integers $n$. Lastly, for any finite set $S \subset \mathbb{Z}$ (or $S \subset \mathbb{Z}+\frac{1}{2}$), we define $\displaystyle\Delta_{1,1,q}(S)\coloneqq \prod_{s_1, s_2 \in S, s_1 < s_2}\langle s_2+s_1\rangle_{q}^{+}\langle s_2-s_1\rangle_{q}$.

\begin{thm}\label{tca}
If $\M_{q}(H_{m,n,a,b,c}(L_1, R_1, B))\neq 0$, then
\begin{equation}\label{eca}
    \frac{\M_{q}(H_{n,m,a,b,c}(L_2, R_2, B))}{\M_{q}(H_{m,n,a,b,c}(L_1, R_1, B))}=\frac{\Bigg[\displaystyle\prod_{i=1}^{m+a}\langle 2i-1\rangle_q!\Bigg]^2}{\Bigg[\displaystyle\prod_{i=1}^{n+a}\langle 2i-1\rangle_q!\Bigg]^{2}}\cdot\frac{\displaystyle\prod_{i=1}^{2n+b-1}{\langle i\rangle_{q}!}}{\displaystyle\prod_{i=1}^{2m+b-1}{\langle i\rangle_{q}!}}\cdot\sqrt{\frac{\displaystyle\prod_{z\in L_{2}}\frac{\langle 2|z|\rangle_{q}}{4}}{\displaystyle\prod_{z\in L_{1}}\frac{\langle 2|z|\rangle_{q}}{4}}}\cdot\frac{\Delta_{1,1,q}(L_{2})\Delta_{1,1,q}(R_{2})}{\Delta_{1,1,q}(L_{1})\Delta_{1,1,q}(R_{1})}.
\end{equation}
\end{thm}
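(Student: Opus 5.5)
We follow the approach of the first author \cite{byun2022shuffling} and Fulmek \cite{fulmek2021shuffling} for the original shuffling theorem. The idea is to cut the region along its vertical diagonal and factor the tiling generating function. The diagonal splits $H_{m,n,a,b,c}(L_1,R_1,B)$ into a left part, containing the intrusion, and a right part. Every tiling of the whole region is determined by three pieces of data: which unit segments on the diagonal are crossed by (horizontal) lozenges, a tiling of the left part, and a tiling of the right part. Call the first piece of data the set $S$ of labels of segments covered by lozenges straddling the diagonal. Given $S$, the left part becomes a region whose right boundary is a vertical zigzag with left-pointing unit triangles removed at positions $L_1\cup S$. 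Similarly, the right part has right-pointing triangles removed at positions $R_1\cup S$. Any $S$ is allowed provided it avoids $B\cup L_1\cup R_1$. This gives
\[
\M_q(H_{m,n,a,b,c}(L_1,R_1,B))=\sum_{S}\Big(\prod_{s\in S}\langle s\rangle\text{-weights}\Big)\,\M_q(\text{Left}(L_1\cup S))\,\M_q(\text{Right}(R_1\cup S)).
\]
Here the straddling horizontal lozenges lie at $i$-coordinate $0$, so they have weight $1$ and do not affect the product.

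The key step is to evaluate each half-region by an explicit product formula depending on the label set; this is presumably the content of the Section~\ref{sec:4} preliminaries. The right half is a half-hexagon with dents on its vertical side. Under the weight $\langle i\rangle^+_q$ on horizontal lozenges, its generating function should be a symplectic/orthogonal-type character evaluation. That gives, up to factors depending only on the size parameters, $\Delta_{1,1,q}(T)$ for the set $T$ of positions of the lozenges exiting through the diagonal, that is, the complement of the dents. The left half, a half-hexagon with an intrusion, likewise gives a $\Delta_{1,1,q}$ of its complementary set times $\sqrt{\prod\langle 2|z|\rangle_q/4}$-type factors. Those factors arise from the asymmetric $q^{n}+q^{-n}$ weighting and a Schur-function evaluation of type $B$ or $C$. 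The hard part is that the summation variable $S$ enters both halves. To handle it, I would rewrite each factor through the complementary set of the fixed dents.

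Concretely, both factors should depend on $S$ only through $\Delta_{1,1,q}(L_1\cup S)$ or $\Delta_{1,1,q}(R_1\cup S)$, together with pieces depending on the complement in the diagonal. Then $\Delta_{1,1,q}(L_1\cup S)=\Delta_{1,1,q}(L_1)\Delta_{1,1,q}(S)\prod_{\ell\in L_1,s\in S}(\cdots)$, and the analogous identity holds for $R_1$. Now compare with the flipped region $H_{n,m,a,b,c}(L_2,R_2,B)$. It has the same barrier set $B$, and $L_1\triangle L_2$ and $R_1\triangle R_2$ are symmetric sets of flipped labels. The admissible $S$ are therefore the same, because $L_1\cup R_1=L_2\cup R_2$ away from flipped positions and a flipped triangle occupies its label on either side. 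The cross-product factors $\prod_{\ell\in L,s\in S}\langle s+\ell\rangle^+\langle s-\ell\rangle$ are invariant under moving a symmetric pair $\{k,-k\}$ from $L$ to $R$. The reason is that the product over $\ell=\pm k$ gives $\langle s+k\rangle^+\langle s-k\rangle^+\langle s-k\rangle\langle s+k\rangle$-type terms, which are symmetric in the sides once the half-region formulas are written out. With this, the $S$-dependent summand for the second region equals that for the first times an $S$-independent constant. That constant is exactly the ratio of the $\Delta_{1,1,q}(L_i)\Delta_{1,1,q}(R_i)$ terms, the $\sqrt{\prod\langle2|z|\rangle_q/4}$ terms, and the prefactors in $m,n,a,b,c$ coming from the boundary sizes, which produce the products of $\langle 2i-1\rangle_q!$ and $\langle i\rangle_q!$.

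The main obstacle is obtaining the half-region formulas in the right normalization and verifying the symmetric-pair cancellation. The intrusion-side formula mixes the two kinds of factors in a way that must cancel precisely for pairs $\pm k$. I would first check the cancellation with a single flipped pair, then induct on the number of flipped pairs: flipping one pair at a time changes $(m,n)$ by one, and the ratio \eqref{eca} telescopes. Finally, the hypothesis $\M_q\neq0$ is used only to divide.
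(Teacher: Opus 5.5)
Your outline is the paper's strategy: condition on the set of diagonal labels crossed by horizontal lozenges, factor each summand, and show that corresponding summands of the two regions have a ratio independent of that set. However, the step that carries all the content is missing, and what you put in its place is wrong. Write $C$ for your $S$, to avoid a clash with the trapezoids $S_{x,y}$.

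\textbf{The pieces and their formulas.} The intrusion runs all the way to the diagonal, so the left part is not one ``half-hexagon with an intrusion''. After discarding the forced leftmost strip, it splits into two independent quartered hexagons:
\begin{itemize}
\item $R_{n+c,2m+2a}(L_1^{+}\cup C^{+})$ above the intrusion;
\item $R_{n+c,2m+2a}(-(L_1^{-}\cup C^{-}))$ below it.
\end{itemize}
The right part is the trapezoid $S_{2m+2a+2c-b+1,\,2n+b}(R_1\cup C)$. These are evaluated by Lemma~\ref{tdb} (Lai--Rohatgi) and Lemma~\ref{tda}.
\begin{itemize}
\item Each left factor is $\Delta_{2,1,q}(Z)\prod_{z\in Z}\frac{\langle 2z\rangle_q}{2}$ divided by $\prod_{i=1}^{m+a}\langle 2i-1\rangle_q!$. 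No factor couples a positive label with a negative one, which is incompatible with the $\Delta_{1,1,q}$ form you propose for the left half.
\item The right factor is $\Delta_{1,1,q}(R_1\cup C)$ of the dent set itself. The type $B/C$ structure you attribute to the right half belongs to the left pieces.
\item The square root in \eqref{eca} is not intrinsic to either half. It appears when the factors $\prod_z\frac{\langle 2z\rangle_q}{2}$ are combined with $\Delta_{1,1,q}(B)=[\Delta_{2,1,q}(B^{+})]^2\prod_{b\in B^{+}}\langle 2b\rangle_q$ for symmetric $B$, as in \eqref{edh}.
\end{itemize}
Also, a crossing lozenge at label $s$ has weight $\langle 2s\rangle_q^{+}$, not $1$: the $j$-axis is the horizontal symmetry axis, so $i$ measures height. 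This is harmless, since the factor is common to both regions for fixed $C$.

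\textbf{The cancellation.} Your argument misidentifies the mechanism. If both halves had cross factors of the same shape $\prod\langle s+\ell\rangle_q^{+}\langle s-\ell\rangle_q$, moving any label across the diagonal would preserve them trivially, and the symmetry of the flipped sets would be irrelevant. In fact the two halves differ.
\begin{itemize}
\item On the left, a flipped label $k>0$ lies in the upper piece and meets only $C^{+}$, while $-k$ lies in the lower piece and meets only $C^{-}$. So each $s\in C$ receives the single factor $\frac{\langle 2(|s|+k)\rangle_q}{2}\cdot\frac{\langle 2\bigl||s|-k\bigr|\rangle_q}{2}$.
\item In the trapezoid, the pair $\pm k$ gives every $s\in C$ the factor $\langle s+k\rangle_q^{+}\langle |s+k|\rangle_q\,\langle s-k\rangle_q^{+}\langle |s-k|\rangle_q$.
\end{itemize}
These agree because $\langle n\rangle_q^{+}\langle n\rangle_q=\frac{1}{2}\langle 2n\rangle_q$ and $\{|s+k|,|s-k|\}=\{|s|+k,\bigl||s|-k\bigr|\}$. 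This is \eqref{edi}, and it is exactly where symmetric flipping is essential: an unpaired flipped label would leave a genuinely $C$-dependent ratio.

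\textbf{Nonvanishing.} You also need termwise equivalence of nonvanishing, not just of disjointness. The conditions $|L_1^{\pm}\cup C^{\pm}|=m+a$ and $|R_1\cup C|=2n+b$ must hold if and only if $|L_2^{\pm}\cup C^{\pm}|=n+a$ and $|R_2\cup C|=2m+b$. This follows from $|L_{flip}^{+}|=m$ and $|R_{flip}^{+}|=n$.

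With these pieces, the $C$-dependence cancels for all flipped pairs at once, so your induction on pairs is unnecessary. It would also need a statement more general than the theorem, since flipping a single left pair turns $H_{m,n,a,b,c}$ into $H_{m-1,n+1,a,b,c}$.
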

The right side of the above equation involves a square root of a ratio of Laurent polynomials in $q$. It turns out that after canceling out common factors, the expression in the square root sign is the square of a certain ratio of Laurent polynomials in $q$. When we take the square root, we choose the sign so that the leading coefficients of both numerator and denominator are positive. The same comment applies to Theorem \ref{tcb}.

There also exists a similar theorem for the case when the vertical diagonal has an even length. To state the second theorem, we recall the following set notations: for any set of real numbers $A$ and a real number $r$, $A+r\coloneqq \{a+r|a\in A\}$ and $-A\coloneqq \{-a|a\in A\}$.

\begin{figure}
    \centering
    \includegraphics[width=0.9\textwidth]{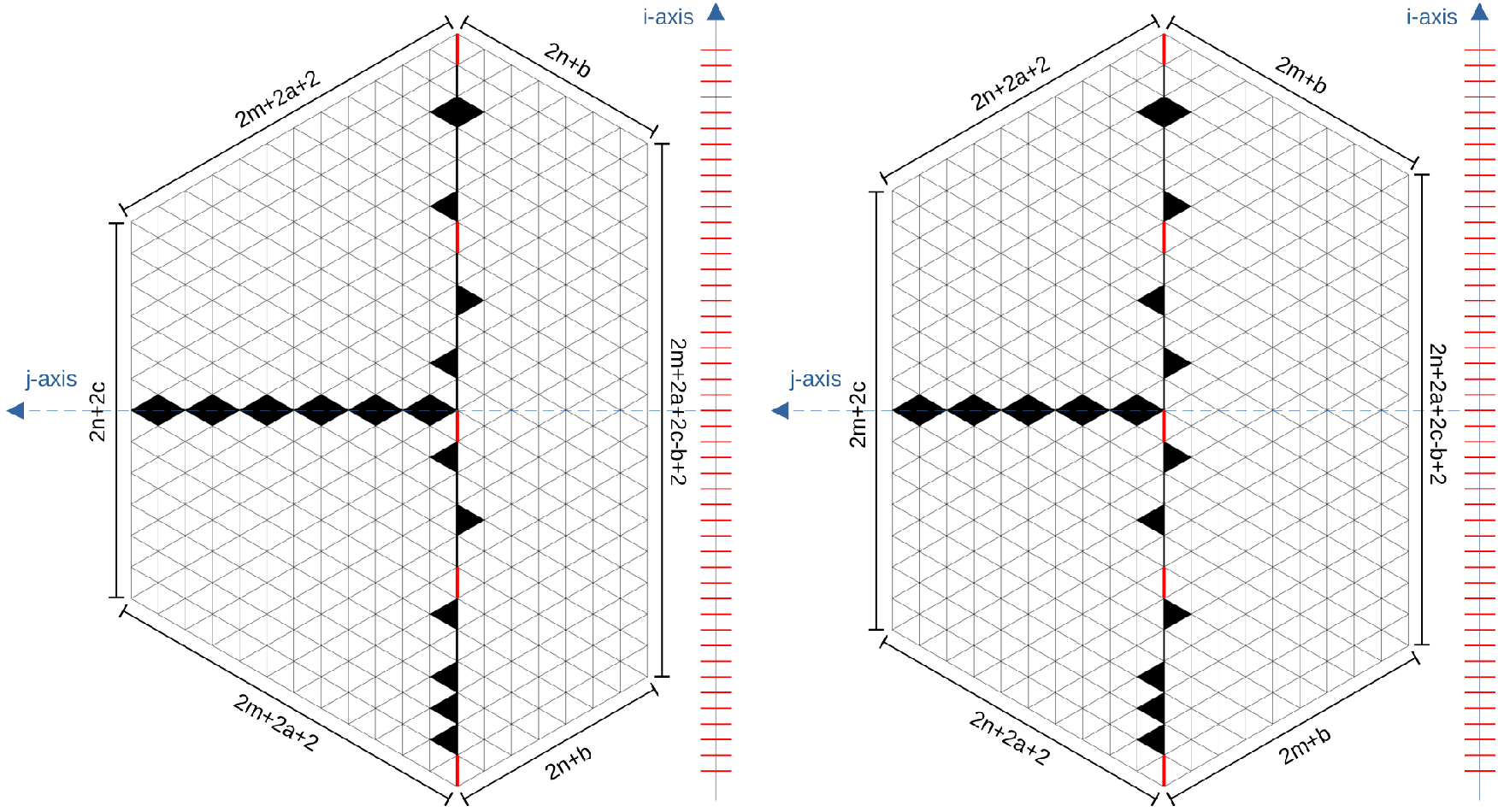}
    \caption{$H'_{2,1,3,5,5}(\{-\frac{21}{2}, -\frac{19}{2}, -\frac{17}{2}, -\frac{13}{2}, -\frac{3}{2}, \frac{3}{2}, \frac{13}{2}, \frac{19}{2}\},\{-\frac{7}{2}, \frac{7}{2}, \frac{19}{2}\},\{-\frac{23}{2}, -\frac{11}{2}, -\frac{1}{2}, \frac{11}{2}, \frac{23}{2}\})$ (left) and $H'_{1,2,3,5,5}(\{-\frac{21}{2}, -\frac{19}{2}, -\frac{17}{2}, -\frac{7}{2}, \frac{7}{2}, \frac{19}{2}\},\{-\frac{13}{2}, -\frac{3}{2}, \frac{3}{2}, \frac{13}{2}, \frac{19}{2}\},\{-\frac{23}{2}, -\frac{11}{2}, -\frac{1}{2}, \frac{11}{2}, \frac{23}{2}\})$ (right) on the $(i,j)$-planes. Positions of barriers are indicated by red color. The picture on the right is obtained from the left picture by flipping left-pointing unit triangles labeled by $-\frac{13}{2},-\frac{3}{2},\frac{3}{2},\frac{13}{2}$ and right-pointing unit triangles labeled by $-\frac{7}{2},\frac{7}{2}$.}
    \label{fcb}
\end{figure}

For any nonnegative integers $a,b,c,m$ and $n$ such that $2a+2c+2\geq b$, we consider a symmetric hexagon of side length $2n+2c$, $2m+2a+2$, $2n+b$, $2m+2a+2c-b+2$, $2n+b$ and $2m+2a+2$ clockwise from the left. Then we remove $2m+2a+2$ leftmost unit triangles (an intrusion of length $2m+2a+2$) on the horizontal symmetry axis. This intrusion touches the vertical diagonal of the hexagon. Label the unit segments on the diagonal by $-(m+n+a+c)-\frac{1}{2}, -(m+n+a+c)+\frac{1}{2},\dots, m+n+a+c-\frac{1}{2}, m+n+a+c+\frac{1}{2}$ from the bottom to the top. Now we remove some left-pointing unit triangles and right-pointing unit triangles that share their sides with the vertical diagonal. Let $L_1$ (and $R_1$) be the set of the labels of unit segments on the vertical diagonal that are shared with the removed left-pointing (and right-pointing) unit triangles. Again, we assume that $|L_1\setminus R_1|\geq 2m$ and $|R_1\setminus L_1|\geq 2n$. As before, we also put barriers on some of the unit segments on the vertical diagonal and let $B$ be the set of the labels of the barriers. We call the region that we just described $H'_{m,n,a,b,c}(L_1, R_1, B)$ (see the left picture in Figure \ref{fcb}).
Now, we flip $2m$ removed left-pointing unit triangles and $2n$ removed right-pointing unit triangles across the vertical diagonal in a symmetric way. Let $L_2$ (and $R_2$) be the set of the labels of the removed left-pointing (and right-pointing) unit triangles after flipping. Again, once flipping is done, we consider the region $H'_{n,m,a,b,c}(L_2, R_2, B)$ (compare the two pictures in Figure \ref{fcb}). We assign weight to lozenges in these regions in the same way as before and denote their tiling generating functions by $\M_{q}(H'_{m,n,a,b,c}(L_1, R_1, B))$ and $\M_{q}(H'_{n,m,a,b,c}(L_2, R_2, B))$, respectively.

\begin{thm}\label{tcb}
If $\M_{q}(H'_{m,n,a,b,c}(L_1, R_1, B))\neq 0$, then
\begin{equation}\label{ecb}
    \frac{\M_{q}(H'_{n,m,a,b,c}(L_2, R_2, B))}{\M_{q}(H'_{m,n,a,b,c}(L_1, R_1, B))}=\frac{\Bigg[\displaystyle\prod_{i=1}^{m+a+1}\langle 2i-2\rangle_q!\Bigg]^2}{\Bigg[\displaystyle\prod_{i=1}^{n+a+1}\langle 2i-2\rangle_q!\Bigg]^2}\cdot\frac{\displaystyle\prod_{i=1}^{2n+b-1}{\langle i\rangle_{q}!}}{\displaystyle\prod_{i=1}^{2m+b-1}{\langle i\rangle_{q}!}}\cdot\sqrt{\frac{\displaystyle\prod_{s\in L_{1}}\langle 2|s|\rangle_q}{\displaystyle\prod_{s\in L_{2}}\langle 2|s|\rangle_q}}\cdot\frac{\Delta_{1,1,q}(L_{2})\Delta_{1,1,q}(R_{2})}{\Delta_{1,1,q}(L_{1})\Delta_{1,1,q}(R_{1})}.
\end{equation}
\end{thm}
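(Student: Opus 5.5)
We follow the strategy of the proof of Theorem \ref{tca} (the idea of \cite{byun2022shuffling,fulmek2021shuffling}). Cut $H'_{m,n,a,b,c}(L_1,R_1,B)$ along its vertical diagonal. Every tiling restricts to a pair of tilings: one of the left part, which is a symmetric semihexagon-type region containing the intrusion, and one of the right part, which is a symmetric trapezoid-type region. The unit triangles along the diagonal that are covered by lozenges crossing the diagonal are encoded by two sets of half-integer labels. The first is the set $S_\ell$ of positions where a left-pointing triangle adjacent to the diagonal lies on the right side of the cut. The second is the set $S_r$ of positions where a right-pointing triangle lies on the left side. The compatibility conditions involve $L_1$, $R_1$ and $B$: barriers forbid crossing, and removed triangles are never crossed. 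So
\[
\M_q(H'_{m,n,a,b,c}(L_1,R_1,B))=\sum_{S}\M_q(\text{Left}(L_1\cup S))\,\M_q(\text{Right}(R_1\cup S')),
\]
where $S$ runs over admissible sets of dents and $S'$ is its complementary encoding. The key point is that the admissible $S$ and the pairing do not depend on which removed triangles were flipped. Flipping a removed triangle only moves that label from the left dent set to the right dent set, while the boundary sizes change with $m\leftrightarrow n$. Thus the two regions in \eqref{ecb} decompose over the same index set of crossing configurations.

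Next, we evaluate each piece by product formulas from Section \ref{sec:4}. The left piece is a halved hexagon: the intrusion of length $2m+2a+2$ sits on the symmetry axis, and symmetric weighting $\langle i\rangle^+_q$ lets us treat it as a symmetric dented region. Its weighted count with dent set $D$ on the diagonal should be given by a Proctor/symplectic-type formula. This formula is a product over $D$ of terms like $\sqrt{\langle 2|s|\rangle_q}$ times $\Delta_{1,1,q}(D)$, divided by a normalization depending only on $m+a$ (the factor $\prod_{i=1}^{m+a+1}\langle 2i-2\rangle_q!$). In the half-integer setting the even-length diagonal produces the $\langle 2i-2\rangle_q$ shape. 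The right piece is a dented trapezoid of height $2n+b$. Its weighted count, by the $q$-analogue of the Cohn--Larsen--Propp formula with the symmetric weight, is $\Delta_{1,1,q}(D')$ divided by $\prod_{i=1}^{2n+b-1}\langle i\rangle_q!$, up to an $n$-independent factor. These formulas should be precisely the lemmas recalled in Section \ref{sec:4} (Lemma \ref{tda} among them). We will take them as given.

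Substituting these formulas, each summand for region 1 equals a product of the form
\[
\frac{G(L_1\cup S)\,\Delta_{1,1,q}(L_1\cup S)\,\Delta_{1,1,q}(R_1\cup S')}{N_{m}\,T_{n}}.
\]
Here $G$ is the product of the $\langle 2|s|\rangle_q$ factors, and $N_m$, $T_n$ are the normalizations. Next we expand $\Delta_{1,1,q}(L_1\cup S)=\Delta_{1,1,q}(L_1)\Delta_{1,1,q}(S)\prod_{l\in L_1,s\in S}(\cdots)$. The cross terms between a fixed label and $S$ match the corresponding cross terms on the other side after flipping: moving a label $\pm k$ from the left set to the right set changes $L$ and $R$ by the same symmetric pair. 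The factors $\langle s+l\rangle^+\langle s-l\rangle$ are symmetric, and the pairing of $S$ with its complement makes the cross products agree. Hence the ratio of summands is independent of $S$ and equals the right side of \eqref{ecb}. Summing, and using that the region-1 sum is nonzero, gives the theorem.

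The main obstacle is this last bookkeeping step. We must show that, for each crossing configuration, the cross factors between the flipped labels and $S$ cancel exactly. This needs the symmetric flipping, since $\pm k$ flip together, which makes products over $\{k,-k\}$ of $\langle s\pm k\rangle$ expressible via $\Delta_{1,1,q}$. We must also check that the sign and normalization of the square-root term $\sqrt{\prod_{L_1}\langle 2|s|\rangle_q/\prod_{L_2}\langle 2|s|\rangle_q}$ come out right. We would first verify it for flipping a single symmetric pair and then iterate, since flips compose.
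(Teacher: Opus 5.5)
Your overall architecture matches the paper's: condition on the horizontal lozenges crossing the vertical diagonal, factor each summand across the diagonal, evaluate the pieces, and show that the ratio of corresponding summands does not depend on the crossing data. That data is a single label set $C$, because a crossing lozenge is a left-pointing triangle just left of the diagonal together with a right-pointing one just right of it at the same label. So the dent sets become $L_1\cup C$ and $R_1\cup C$, and the weight $\prod_{i\in C}\langle 2i\rangle_q^{+}$ is common to both regions. The right piece is indeed $S_{2m+2a+2c-b+2,\,2n+b}(R_1\cup C)$, handled by Lemma~\ref{tda}.

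\textbf{The left piece is where the argument breaks.} The intrusion of length $2m+2a+2$ ends exactly at the midpoint of the diagonal. So the part of the region left of the diagonal splits into an upper and a lower quartered hexagon, $R_{n+c,2m+2a+1}(D^+)$ and $R_{n+c,2m+2a+1}(-D^-)$, where $D=L_1\cup C$. Lemma~\ref{tdb} then gives
\[
\frac{\Delta_{2,1,q}(D^+)\,\Delta_{2,1,q}(-D^-)}{\Bigl[\prod_{i=1}^{m+a+1}\langle 2i-2\rangle_q!\Bigr]^2},
\]
which is nonzero only if $|D^+|=|D^-|=m+a+1$. This is not of your form $G(D)\Delta_{1,1,q}(D)/N_m$:
\begin{itemize}
\item it has no factors pairing a positive label with a negative one;
\item the normalization is squared;
\item even for symmetric $D$, Lemma~\ref{tdc}(5) gives $\Delta_{1,1,q}(D)\prod_{s\in D^+}\langle 2s\rangle_q^{-1}$. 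The per-label factor is therefore $\langle 2|s|\rangle_q^{-1/2}$, not $\sqrt{\langle 2|s|\rangle_q}$.
\end{itemize}
If you carried out the computation with your posited formula, you would get an unsquared first factor and the reciprocal of the square-root factor in \eqref{ecb}.

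Your formula would also make the cross terms with $C$ cancel for any relabeling whatsoever. Write $L_{fix}=L_1\cap L_2$, $L_{flip}=L_1\setminus L_2$, and similarly for $R$. Then numerator and denominator both split as $\Delta_{1,2,q}$ of $C$ against $L_{fix},L_{flip},R_{fix},R_{flip}$. Symmetry of the flip would then play no role, which signals that the model is wrong.

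\textbf{What the bookkeeping actually requires.} With the correct formulas, the step you call the main obstacle is exactly where the symmetry $L_{flip}=-L_{flip}$, $R_{flip}=-R_{flip}$ enters, and it has to be done explicitly.
\begin{itemize}
\item The quartered hexagons contribute cross factors such as $\Delta_{2,2,q}(C^+,L^+_{flip})\,\Delta_{2,2,q}(-C^-,-L^-_{flip})$, while the trapezoid contributes $\Delta_{1,2,q}(C,L_{flip})$. Lemma~\ref{tdc}(6), which needs $L_{flip}$ to be symmetric, identifies these, so all dependence on $C$ cancels.
\item The same identity converts the cross terms between $L_{fix}$ and the flipped sets into $\Delta_{1,2,q}$'s. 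Together with the trapezoid's cross terms between $R_{fix}$ and the flipped sets, these reassemble by Lemma~\ref{tdc}(1) into $\frac{\Delta_{1,1,q}(L_2)\Delta_{1,1,q}(R_2)}{\Delta_{1,1,q}(L_1)\Delta_{1,1,q}(R_1)}$.
\item The leftover factor $\Delta_{1,1,q}(L_{flip})/\Delta_{2,1,q}(L^+_{flip})^2=\prod_{s\in L^+_{flip}}\langle 2s\rangle_q$, divided by its $R_{flip}$ analogue, is precisely $\sqrt{\prod_{s\in L_1}\langle 2|s|\rangle_q/\prod_{s\in L_2}\langle 2|s|\rangle_q}$.
\item Symmetry also gives $|L^+_{flip}|=m$ and $|R^+_{flip}|=n$. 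This is what makes the conditions $|L_1^{\pm}\cup C^{\pm}|=m+a+1$, $|R_1\cup C|=2n+b$ equivalent to $|L_2^{\pm}\cup C^{\pm}|=n+a+1$, $|R_2\cup C|=2m+b$, so that both sums run over the same sets $C$.
\end{itemize}
As it stands, your proposal has neither the correct left-hand formula nor this cancellation, so it does not establish \eqref{ecb}.
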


Since the proof of Theorems \ref{tca} and \ref{tcb} requires some preliminary results, we postpone the proof of the theorems to Section \ref{sec:5}.

\begin{remark}\label{tcc}
    Observe that in \eqref{eca}, the right side of the equation is independent of the set $B$, which encodes the position of the barriers on the vertical diagonal. In particular, if we denote $H_{m,n,a,b,c}(L_1, R_1, \varnothing)$ by $H_{m,n,a,b,c}(L_1, R_1)$, then from our observation, we have
    \begin{equation}\label{ecc}
        \frac{\M_{q}(H_{n,m,a,b,c}(L_2, R_2, B))}{\M_{q}(H_{m,n,a,b,c}(L_1, R_1, B))}=\frac{\M_{q}(H_{n,m,a,b,c}(L_2, R_2))}{\M_{q}(H_{m,n,a,b,c}(L_1, R_1))}.
    \end{equation}
    Similarly, the right side of \eqref{ecb} is independent of the set $B$. Thus, if we denote $H'_{m,n,a,b,c}(L_1, R_1, \varnothing)$ by $H'_{m,n,a,b,c}(L_1, R_1)$, then we have
    \begin{equation}\label{ecd}
        \frac{\M_{q}(H'_{n,m,a,b,c}(L_2, R_2, B))}{\M_{q}(H'_{m,n,a,b,c}(L_1, R_1, B))}=\frac{\M_{q}(H'_{n,m,a,b,c}(L_2, R_2))}{\M_{q}(H'_{m,n,a,b,c}(L_1, R_1))}.
    \end{equation}
    Our proof of Theorems \ref{tca} and \ref{tcb} will give a clear explanation of why the formulas in \eqref{eca} and \eqref{ecb} do not depend on the set $B$. These two brief notations $H_{m,n,a,b,c}(L_1, R_1)$ and $H'_{m,n,a,b,c}(L_1, R_1)$ will be used when we state Theorem \ref{tcd} below.
\end{remark}

As mentioned earlier, we now state exact enumeration results for the tiling generating functions of the five regions $A(x,y,z,w;a_1,\ldots,a_{2k+1}),\ldots, E(x,y,z,w;a_1,\ldots,a_{2k+1})$ introduced in the previous section. The result is given in terms of formulas in Theorems \ref{tca} and \ref{tcb}. We use the standard notation $[n]\coloneqq\{1,\ldots,n\}$ for a positive integer $n$ and $[0]\coloneqq\varnothing$. Recall that $a=\sum_{i=1}^{2k+1}a_{i}$ and $Q_{o}\coloneqq\frac{1}{2}P_{o}$ and $Q_{e}\coloneqq\frac{1}{2}P_{e}$, where $P_{o}$ and $P_{e}$ were defined in Section 2 (they encode the $i$-coordinates of the center of the unit triangles in the fern).

\begin{thm}\label{tcd}
    Let $x,y,z,w,k\in\mathbb{Z}_{\geq0}$, $a_1\ldots,a_{2k}\in\mathbb{Z}_{>0}$, and $a_{2k+1}\in\mathbb{Z}_{\geq0}$ such that $z\geq w$.    
    \begin{enumerate}
    \item 
    The tiling generating function of the region $A(x,y,z,w;a_1,\ldots,a_{2k+1})$ satisfies
    \begin{equation}\label{ece}
    \begin{aligned}
        &\frac{\M_{q}(A(x,y,z,w;a_1,\ldots,a_{2k+1}))}{\M_{q}(A(x,y,z,w;a))}\cdot \frac{\wt(F_{sym}(2a_{1},a_{2},\ldots,a_{2k+1}))}{\wt(F_{sym}(2a))}\\
        =&
        \begin{cases}
        \displaystyle \frac{\M_q(H'_{0,a_{e},a_{o}+y-1,z+w,x-y+z}(Q_{o},Q_{e}\cup X\cup Y))}{\M_q(H'_{a_{e},0,a_{o}+y-1,z+w,x-y+z}(Q_{o}\cup Q_{e},X\cup Y))} & \text{if $y < w$}\\[10pt]
        \displaystyle \frac{\M_q(H'_{0,a_{e},a_{o}+y-1,z+w,x-y+z}(Q_{o}\cup Y,Q_{e}\cup X))}{\M_q(H'_{a_{e},0,a_{o}+y-1,z+w,x-y+z}(Q_{o}\cup Q_{e}\cup Y,X))} & \text{if $w\leq y\leq z$}\\[10pt]
        \displaystyle \frac{\M_q(H'_{0,a_{e},a_{o}+y-1,z+w,x}(Q_{o}\cup X\cup Y,Q_{e}))}{\M_q(H'_{a_{e},0,a_{o}+y-1,z+w,x}(Q_{o}\cup Q_{e}\cup X\cup Y,\varnothing))} & \text{if $z < y$}
        \end{cases},
    \end{aligned}
    \end{equation}
    where $X=\begin{cases}
        [z-y]-\frac{2a+2x+2z+1}{2}, & \text{if $y\leq z$}\\
        [y-z]-\frac{2a+2x+2y+1}{2}, & \text{if $z<y$}
    \end{cases}$ and
    
    $Y=\begin{cases}
        [2a+2x+z+w]\setminus[2a+2x+y+z]-\frac{2a+2x+2z+1}{2}, & \text{if $y<w$}\\
        [2a+2x+y+z]\setminus[2a+2x+z+w]-\frac{2a+2x+2z+1}{2}, & \text{if $w\leq y\leq z$}\\
        [2a+2x+2y]\setminus[2a+2x+y+w]-\frac{2a+2x+2y+1}{2}, & \text{if $z<y$}
    \end{cases}$.

    \vspace{5mm}
    \item 
    The tiling generating function of the region $B(x,y,z,w;a_1,\ldots,a_{2k+1})$ satisfies
    \begin{equation}\label{ecf}
    \begin{aligned}
        &\frac{\M_{q}(B(x,y,z,w;a_1,\ldots,a_{2k+1}))}{\M_{q}(B(x,y,z,w;a))}\cdot\frac{\wt(F_{sym}(2a_{1}-1,a_{2},\ldots,a_{2k+1}))}{\wt(F_{sym}(2a-1))}\\
        =&
        \begin{cases}
        \displaystyle \frac{\M_q(H_{0,a_{e},a_{o}+y-1,z+w,x-y+z}(Q_{o},Q_{e}\cup X\cup Y))}{\M_q(H_{a_{e},0,a_{o}+y-1,z+w,x-y+z}(Q_{o}\cup Q_{e},X\cup Y))} & \text{if $y < w$}\\[10pt]
        \displaystyle \frac{\M_q(H_{0,a_{e},a_{o}+y-1,z+w,x-y+z}(Q_{o}\cup Y,Q_{e}\cup X))}{\M_q(H_{a_{e},0,a_{o}+y-1,z+w,x-y+z}(Q_{o}\cup Q_{e}\cup Y,X))} & \text{if $w\leq y\leq z$}\\[10pt]
        \displaystyle \frac{\M_q(H_{0,a_{e},a_{o}+y-1,z+w,x}(Q_{o}\cup X\cup Y,Q_{e}))}{\M_q(H_{a_{e},0,a_{o}+y-1,z+w,x}(Q_{o}\cup Q_{e}\cup X\cup Y,\varnothing))} & \text{if $z < y$}
        \end{cases},
    \end{aligned}
    \end{equation}
    where $X=\begin{cases}
        [z-y]-(a+x+z), & \text{if $y\leq z$}\\
        [y-z]-(a+x+y), & \text{if $z<y$}
    \end{cases}$ and
    
    $Y=\begin{cases}
        [2a+2x+z+w-1]\setminus[2a+2x+y+z-1]-(a+x+z), & \text{if $y<w$}\\
        [2a+2x+y+z-1]\setminus[2a+2x+z+w-1]-(a+x+z), & \text{if $w\leq y\leq z$}\\
        [2a+2x+2y-1]\setminus[2a+2x+y+w-1]-(a+x+y), & \text{if $z<y$}
    \end{cases}$.

    \vspace{5mm}
    \item 
    The tiling generating function of the region $C(x,y,z,w;a_1,\ldots,a_{2k+1})$ satisfies
    \begin{equation}\label{ecg}
    \begin{aligned}
        &\frac{\M_{q}(C(x,y,z,w;a_1,\ldots,a_{2k+1}))}{\M_{q}(C(x,y,z,w;a))}\cdot\frac{\wt(F_{sym}(2a_{1}-1,a_{2},\ldots,a_{2k+1}))}{\wt(F_{sym}(2a-1))}\\
        =&
        \begin{cases}
        \displaystyle \frac{\M_q(H_{0,a_{e},a_{o}+y-1,z+w+1,x-y+z}(Q_{o},Q_{e}\cup X\cup Y\cup\{0\}))}{\M_q(H_{a_{e},0,a_{o}+y-1,z+w+1,x-y+z}(Q_{o}\cup Q_{e},X\cup Y\cup\{0\}))} & \text{if $y < w$}\\[10pt]
        \displaystyle \frac{\M_q(H_{0,a_{e},a_{o}+y-1,z+w+1,x-y+z}(Q_{o}\cup Y,Q_{e}\cup X\cup\{0\}))}{\M_q(H_{a_{e},0,a_{o}+y-1,z+w+1,x-y+z}(Q_{o}\cup Q_{e}\cup Y, X\cup\{0\}))} & \text{if $w\leq y\leq z$}\\[10pt]
        \displaystyle \frac{\M_q(H_{0,a_{e},a_{o}+y-1,z+w+1,x}(Q_{o}\cup X\cup Y,Q_{e}\cup\{0\}))}{\M_q(H_{a_{e},0,a_{o}+y-1,z+w+1,x}(Q_{o}\cup Q_{e}\cup X\cup Y,\{0\}))} & \text{if $z < y$}
        \end{cases},
    \end{aligned}
    \end{equation}
    where $X=\begin{cases}
        [z-y]-(a+x+z), & \text{if $y\leq z$}\\
        [y-z]-(a+x+y), & \text{if $z<y$}
    \end{cases}$ and
    
    $Y=\begin{cases}
        [2a+2x+z+w-1]\setminus[2a+2x+y+z-1]-(a+x+z), & \text{if $y<w$}\\
        [2a+2x+y+z-1]\setminus[2a+2x+z+w-1]-(a+x+z), & \text{if $w\leq y\leq z$}\\
        [2a+2x+2y-1]\setminus[2a+2x+y+w-1]-(a+x+y), & \text{if $z<y$}
    \end{cases}$.

    \vspace{5mm}
    \item 
    The tiling generating function of the region $D(x,y,z,w;a_1,\ldots,a_{2k+1})$ satisfies
    \begin{equation}\label{ech}
    \begin{aligned}    
        &\frac{\M_{q}(D(x,y,z,w;a_1,\ldots,a_{2k+1}))}{\M_{q}(A(x,y,z,w;a))}\cdot \frac{\wt(F_{sym}(2a_{1},a_{2},\ldots,a_{2k+1}))}{\wt(F_{sym}(2a))}\\
        =&
        \begin{cases}
        \displaystyle \frac{\M_q(H'_{0,a_{o},a_{e}+y-1,z+w,x-y+z}(Q_{e},Q_{o}\cup X\cup Y))}{\M_q(H'_{a_{o},0,a_{e}+y-1,z+w,x-y+z}(Q_{o}\cup Q_{e},X\cup Y))} & \text{if $y < w$}\\[10pt]
        \displaystyle \frac{\M_q(H'_{0,a_{o},a_{e}+y-1,z+w,x-y+z}(Q_{e}\cup Y,Q_{o}\cup X))}{\M_q(H'_{a_{o},0,a_{e}+y-1,z+w,x-y+z}(Q_{o}\cup Q_{e}\cup Y, X))} & \text{if $w\leq y\leq z$}\\[10pt]
        \displaystyle \frac{\M_q(H'_{0,a_{o},a_{e}+y-1,z+w,x}(Q_{e}\cup X\cup Y,Q_{o}))}{\M_q(H'_{a_{o},0,a_{e}+y-1,z+w,x}(Q_{o}\cup Q_{e}\cup X\cup Y,\varnothing))} & \text{if $z < y$}
        \end{cases},
    \end{aligned}
    \end{equation}
    where $X=\begin{cases}
        [z-y]-\frac{2a+2x+2z+1}{2}, & \text{if $y\leq z$}\\
        [y-z]-\frac{2a+2x+2y+1}{2}, & \text{if $z<y$}
    \end{cases}$ and
    
    $Y=\begin{cases}
        [2a+2x+z+w]\setminus[2a+2x+y+z]-\frac{2a+2x+2z+1}{2}, & \text{if $y<w$}\\
        [2a+2x+y+z]\setminus[2a+2x+z+2]-\frac{2a+2x+2z+1}{2}, & \text{if $w\leq y\leq z$}\\
        [2a+2x+2y]\setminus[2a+2x+y+w]-\frac{2a+2x+2y+1}{2}, & \text{if $z<y$}
    \end{cases}$.

    \vspace{5mm}
    \item 
    The tiling generating function of the region $E(x,y,z,w;a_1,\ldots,a_{2k+1})$ satisfies
    \begin{equation}\label{eci}
    \begin{aligned}
        &\frac{\M_{q}(E(x,y,z,w;a_1,\ldots,a_{2k+1}))}{\M_{q}(C(x,y,z,w;a))}\cdot \frac{\wt(F_{sym}(2a_{1}-1,a_{2},\ldots,a_{2k+1}))}{\wt(F_{sym}(2a-1))}\\
        =&
        \begin{cases}
        \displaystyle \frac{\M_q(H_{0,a_{o}-1,a_{e}+y,z+w+1,x-y+z}(Q_{e},Q_{o}\cup X\cup Y))}{\M_q(H_{a_{o}-1,0,a_{e}+y,z+w+1,x-y+z}((Q_{o}\setminus\{0\})\cup Q_{e},X\cup Y\cup\{0\}))} & \text{if $y < w$}\\[10pt]
        \displaystyle \frac{\M_q(H_{0,a_{o}-1,a_{e}+y,z+w+1,x-y+z}(Q_{e}\cup Y,Q_{o}\cup X))}{\M_q(H_{a_{o}-1,0,a_{e}+y,z+w+1,x-y+z}((Q_{o}\setminus\{0\})\cup Q_{e}\cup Y, X\cup\{0\}))} & \text{if $w\leq y\leq z$}\\[10pt]
        \displaystyle \frac{\M_q(H_{0,a_{o}-1,a_{e}+y,z+w+1,x}(Q_{e}\cup X\cup Y,Q_{o}))}{\M_q(H_{a_{o}-1,0,a_{e}+y,z+w+1,x}((Q_{o}\setminus\{0\})\cup Q_{e}\cup X\cup Y,\{0\}))} & \text{if $z < y$}
        \end{cases},
    \end{aligned}
    \end{equation}
    where $X=\begin{cases}
        [z-y]-(a+x+z), & \text{if $y\leq z$}\\
        [y-z]-(a+x+y), & \text{if $z<y$}
    \end{cases}$ and 
    
    $Y=\begin{cases}
        [2a+2x+z+w-1]\setminus[2a+2x+y+z-1]-(a+x+z), & \text{if $y<w$}\\
        [2a+2x+y+z-1]\setminus[2a+2x+z+w-1]-(a+x+z), & \text{if $w\leq y\leq z$}\\
        [2a+2x+2y-1]\setminus[2a+2x+y+w-1]-(a+x+y), & \text{if $z<y$}
    \end{cases}$.
    \end{enumerate}
\end{thm}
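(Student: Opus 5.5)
We prove part (1) in detail; the other four parts follow the same template. The idea is to recognize both $A(x,y,z,w;a_1,\ldots,a_{2k+1})$ and $A(x,y,z,w;a)$, after a suitable normalization, as regions of the form $H'_{m,n,\cdot,\cdot,\cdot}(L,R,B)$ from Theorem~\ref{tcb}. Passing from one to the other is exactly a symmetric shuffle along a vertical line. Concretely, we draw the vertical lattice line through the fern axis, i.e. the line through the core's right side where the fern and the intrusion meet. In the $(i,j)$-coordinates this line is horizontal in $j$, so in the paper's convention it plays the role of the vertical diagonal of $H'$. The triangles of the fern lying on this axis are exactly the unit triangles recorded by $P_o$ and $P_e$, rescaled to the diagonal labels $Q_o$ and $Q_e$. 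Triangles of sizes $a_1,a_3,\ldots$ point one way and those of sizes $a_2,a_4,\ldots$ the other way.

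The first step is forced lozenges. In a fern triangle of size $s$, the part off the diagonal row is tiled in a forced way once the row is fixed; more precisely, the fern region is a hole. The portion of the hexagon between the fern axis and the two sides parallel to it, near the corners, is also forced, and these forced pieces produce the sets $X$ and $Y$. $X$ accounts for the dents created where the hexagon boundary, of side lengths depending on $y$ versus $z$, meets the diagonal. $Y$ accounts for the unit triangles forced by the difference between $w$ and $y$ (or between $z$ and $y$). The three cases $y<w$, $w\le y\le z$, $z<y$ arise because the forced strips lie on different sides of the diagonal, which determines whether $Y$ enters the left or the right label set. This gives
\[
\M_q\bigl(A(x,y,z,w;a_1,\ldots,a_{2k+1})\bigr)\,\wt\bigl(F_{sym}(2a_1,a_2,\ldots,a_{2k+1})\bigr) = c\,\M_q\bigl(H'_{0,a_e,\ldots}(Q_o\cup Y,\,Q_e\cup X)\bigr).
\]
Here the fern weight appears because the horizontal lozenges inside the fern, which are absent from the hole region, are exactly those that would be present if the fern were filled. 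The common factor $c$ collects the forced lozenges outside the fern, which are non-horizontal or horizontal with known weight. We do the same for the single-triangle fern $F_{sym}(2a)$ and obtain the analogous identity with $H'_{a_e,0,\ldots}(Q_o\cup Q_e\cup Y,X)$: filling all the triangles of the fern into one triangle moves the $P_e$ positions to the other side. Both identities share the same $c$, and dividing them eliminates it. We would verify the parameter matching: the hexagon side lengths of $A$ and of $H'$ agree with $m=0,n=a_e$ versus $m=a_e,n=0$, using $a$-parameter $a_o+y-1$, $b=z+w$, and $c=x-y+z$ (or $c=x$ when $z<y$).

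Theorem~\ref{tcb} is not needed to prove the displayed identity itself. The identity merely expresses the left side as a ratio of two $H'$ regions, whose ratio is then given explicitly by \eqref{ecb}. So the proof is the bijective identification above, and the remaining work is to confirm that the $H'$ regions are correctly parametrized. For part (2) we use $H$ (odd diagonal, since the core $2a_1-1$ is odd, putting the labels in $\mathbb{Z}$). Part (3) adds the extra removed right triangle labelled $0$. Parts (4) and (5) use the flipped fern, so the roles of $Q_o$ and $Q_e$ swap, and in part (5) the central label $0$ of the core is absorbed into the intrusion as the right triangle $\{0\}$.

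The main obstacle is the careful bookkeeping showing that the forced-lozenge regions produce exactly the dent sets $X$ and $Y$ with the stated offsets, such as $-\frac{2a+2x+2z+1}{2}$, in each of the three cases. It must also be checked that the forced lozenges contribute identical weights in the numerator and denominator regions, which holds because they lie outside the shuffled diagonal. We would handle this by drawing the region with the diagonal placed at $j=0$, computing the row extents on each side from the side lengths, and reading off which diagonal positions are blocked.
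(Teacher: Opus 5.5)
Your template matches the paper's: you realize $A(x,y,z,w;a_1,\ldots,a_{2k+1})$ and $A(x,y,z,w;a)$, up to forced lozenges, as two $H'$-regions related by flipping the right-pointing dents $Q_e$, and you cancel the common forced weights. But the identification you write down is false in general, and so is your claim that Theorem~\ref{tcb} is not needed.

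Take $y\le z$. The diagonal of $H'_{0,a_e,a_o+y-1,z+w,x-y+z}$ has $2a+2x+2z$ unit segments. The hexagon of $A$ is not symmetric, so its side lengths cannot agree with those of $H'$, and $A$ meets the fern axis in only $2a+2x+y+\min(y,w)$ of these segments. The bottom $z-y$ surplus positions are the dents $X$, which force $\blacktriangleright_{z-y}$. On top, $Y$ accounts for $|w-y|$ positions. The topmost $z-w$ positions (if $y<w$), resp.\ $z-y$ positions (if $w\le y\le z$), are neither dents nor outside the region.

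Next to these positions, $H'$ contains thin parallelogram strips along its top-left and top-right sides that do not belong to $A$. These strips are forced (by weight-$1$ lozenges) only if no horizontal lozenge crosses the diagonal there. In the barrier-free regions appearing in the statement nothing prevents such crossings. In the decomposition \eqref{eej}, crossing sets $C$ that contain these positions are admissible and contribute nonzero terms by Lemmas~\ref{tda} and~\ref{tdb}. So in general $\M_q(H'_{0,a_e,\ldots}(\ldots))$ is \emph{not} a constant times $\wt(F_{sym}(2a_1,\ldots))\,\M_q(A(x,y,z,w;a_1,\ldots))$. Your identification is correct only when $z<y$ (or in the degenerate cases $z=w$, resp.\ $y=z$).

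The missing step is the paper's barrier device. Put barriers on those positions:
\begin{itemize}
\item $B=[2a+2x+2z]\setminus[2a+2x+z+w]-\frac{2a+2x+2z+1}{2}$ if $y<w$;
\item $B=[2a+2x+2z]\setminus[2a+2x+y+z]-\frac{2a+2x+2z+1}{2}$ if $w\le y\le z$;
\item $B=\varnothing$ if $z<y$.
\end{itemize}
Then both identifications hold exactly, with $H'_{0,a_e,\ldots}(\ldots,B)$ and $H'_{a_e,0,\ldots}(\ldots,B)$. The forced factors $\wt(\blacktriangleright_{z-y})\wt(\blacktriangleright_{w-y})$, resp.\ $\wt(\blacktriangleright_{z-y})\wt(\blacktriangleleft_{y-w})$, cancel in the ratio.

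What you obtain is a ratio of regions \emph{with} barriers. Passing to the barrier-free ratio in \eqref{ece} requires Remark~\ref{tcc}: the shuffling ratio is independent of $B$. This comes from Theorem~\ref{tcb} and its proof, where the two sums over crossing sets $C$ are compared termwise and the termwise ratio does not involve $B$. So Theorem~\ref{tcb} enters essentially in proving the displayed identity, not just in evaluating the final ratio. The same applies to parts (2)--(5), with Theorem~\ref{tca} in place of Theorem~\ref{tcb} where appropriate.
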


Note that the ratios of the tiling generating functions of two $H$ regions (or two $H'$ regions) in \eqref{ece}-\eqref{eci} are given by explicit product formulas due to Theorem \ref{tca} (or Theorem \ref{tcb}). Thus, Theorem \ref{tcd} gives explicit product formulas for the left side of the equations \eqref{ece}-\eqref{eci}. Then combining with the authors' previous work \cite{byunlai2025lozenge}, which provides explicit product formulas of the tiling generating functions $\M_{q}(A(x,y,z,w;a)), \M_{q}(B(x,y,z,w;a)), \M_{q}(C(x,y,z,w;a))$, one can deduce the tiling generating functions of the regions $\M_{q}(A(x,y,z,w;a_1,\ldots,a_{2k+1})),\ldots, \M_{q}(E(x,y,z,w;a_1,\ldots,a_{2k+1}))$ up to a multiplicative factor, which is the ratio of weights of symmetric ferns (see the second to the last paragraph in Section \ref{sec:2}).

Using Theorems \ref{tca} and \ref{tcb}, we can give a simple proof of Theorem \ref{tcd}.

\begin{proof}[Proof of Theorem \ref{tcd}]
    Proof of the five equations \eqref{ece}-\eqref{eci} are almost the same, so we only present the detailed proof of \eqref{ece}. The idea of the proof is the following:

\begin{itemize}
    \item Realize the region (one of the five regions) as $H_{m,n,a,b,c}(L_1, R_1, B)$ or $H'_{m,n,a,b,c}(L_1, R_1, B)$ with suitable choices of the parameters and sets.
    \item Apply Theorem \ref{tca} (if the core of the symmetric fern has odd size) or Theorem \ref{tcb} (if the core of the symmetric fern has even size).
    \item Realize the resulting region as the counterpart region in the statement of Theorem \ref{tcd}.
    \item Keep track of the weight of forced lozenges, then we can deduce Theorem \ref{tcd}.
\end{itemize}

We now apply the above idea to deduce \eqref{ece}. The proof is slightly different depending on whether $y$ satisfies 1) $y < w$, 2) $w\leq y\leq z$, or 3) $z < y$. Thus, we consider three cases separately. 

\textbf{Case 1) $y < w$}

\begin{figure}
    \centering
    \includegraphics[width=0.88\textwidth]{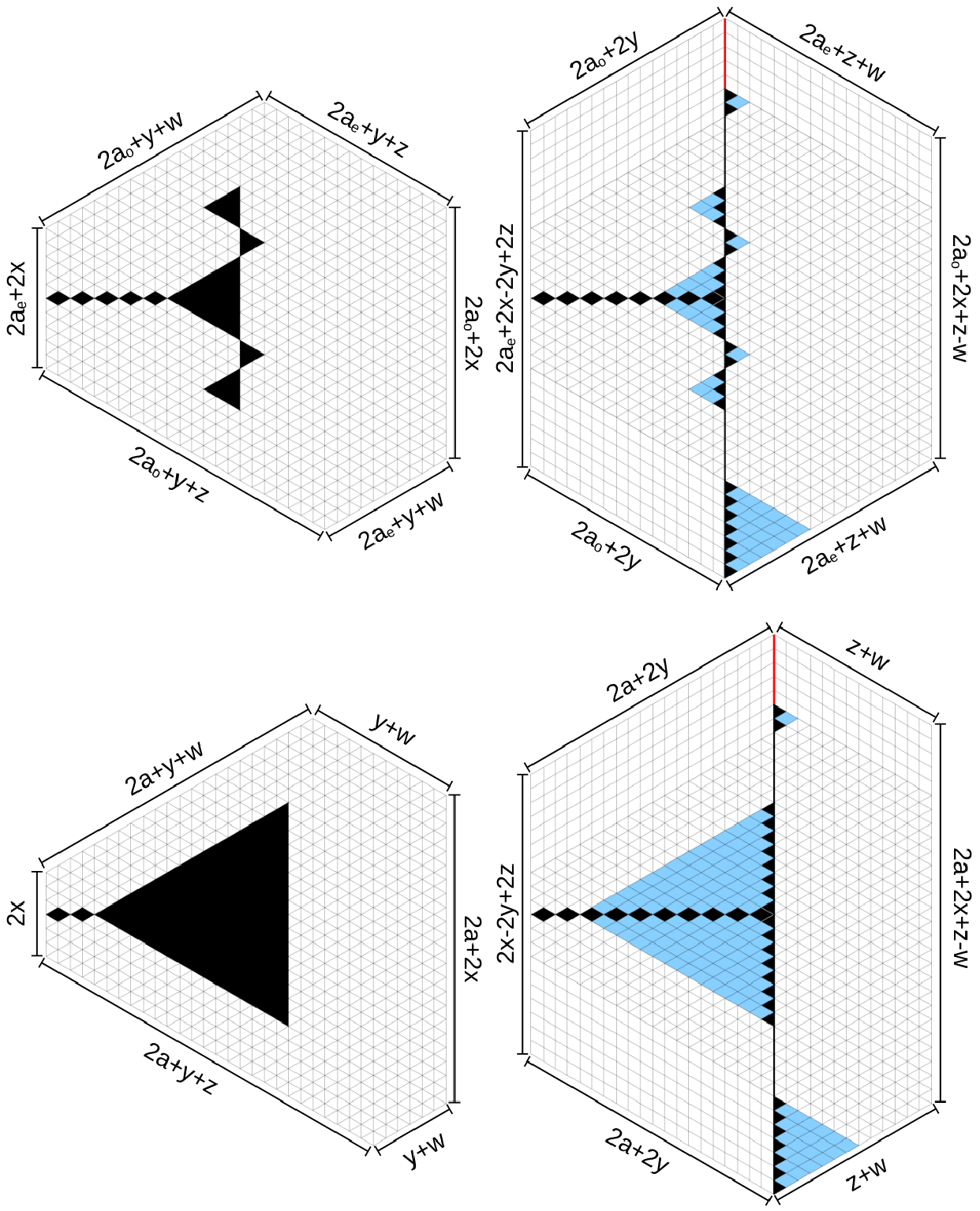}
    \caption{}
    \label{fcc}
\end{figure}

In this case, both vertical lines through either the top or bottom vertex of the boundary hexagon are strictly positioned to the right of the fern axis, as shown in the top left picture in Figure \ref{fcc}. We now consider a region $H'_{0,a_{e},a_{o}+y-1,z+w,x-y+z}(Q_{o},Q_{e}\cup X\cup Y,B)$, where $B=[2a+2x+2z]\setminus[2a+2x+z+w]-\frac{2a+2x+2z+1}{2}$, $X$ and $Y$ are as defined right after \eqref{ece}, and $Q_{o}$ and $Q_{e}$ are as defined in Section 2. We claim that two regions $A(x,y,z,q;a_{1},\ldots,a_{2k+1})$ and $H'_{0,a_{e},a_{o}+y-1,z+w,x-y+z}(Q_{o},Q_{e}\cup X\cup Y,B)$ have the same number of lozenge tilings. This is because, after removing forced lozenges from $H'_{0,a_{e},a_{o}+y-1,z+w,x-y+z}(Q_{o},Q_{e}\cup X\cup Y,B)$, one gets $A(x,y,z,w;a_{1},\ldots,a_{2k+1})$ (compare the two pictures on the top in Figure \ref{fcc}). However, their tiling generating functions are not the same because forced horizontal lozenges have weights different from $1$. Instead, they satisfy the following relation
\begin{equation}\label{ecj}
    M_{q}(A(x,y,z,w;a_{1},\ldots,a_{2k+1}))=\frac{H'_{0,a_{e},a_{o}+y-1,z+w,x-y+z}(Q_{o},Q_{e}\cup X\cup Y,B)}{\wt(F_{sym}(2a_{1},a_{2},\ldots,a_{2k+1}))\cdot \wt(\blacktriangleright_{z-y})\cdot\wt(\blacktriangleright_{w-y})},
\end{equation}
where $\wt(\blacktriangleright_{z-y})$ is the product of weights of all the horizontal lozenges contained in the right-pointing triangle of size $(z-y)$ and $\wt(\blacktriangleright_{w-y})$ is the product of weights of all the horizontal lozenges contained in the right-pointing triangle of size $(w-y)$ as described in Figure \ref{fcc} (in the top right picture, they are collections of shaded lozenges strictly below and above the symmetric fern, respectively). We then flip every right-pointing unit triangle contained in the symmetric fern, which are labeled by elements in $Q_{e}$, following the flipping process described in Theorem \ref{tcb}. The resulting region is $H'_{a_{e},0,a_{o}+y-1,z+w,x-y+z}(Q_{o}\cup Q_{e},X\cup Y,B)$, as can be seen by comparing the two pictures on the right in Figure \ref{fcc}. There are multiple forced lozenges in the new region $H'_{a_{e},0,a_{o}+y-1,z+w,x-y+z}(Q_{o}\cup Q_{e},X\cup Y,B)$ and if we delete them, we get $A(x,y,z,w;a)$ (see the two pictures at the bottom in Figure \ref{fcc}). If we keep track of the weight of every forced lozenge, then the tiling generating functions of these two regions are related by the following relation
\begin{equation}\label{eck}
    M_{q}(A(x,y,z,w;a))=\frac{M_{q}(H'_{a_{e},0,a_{o}+y-1,z+w,x-y+z}(Q_{o}\cup Q_{e},X\cup Y,B))}{\wt(F_{sym}(2a))\cdot \wt(\blacktriangleright_{z-y})\cdot\wt(\blacktriangleright_{w-y})},
\end{equation}
where $\wt(\blacktriangleright_{z-y})$ and $\wt(\blacktriangleright_{w-y})$ are the same as in \eqref{ecj} (compare the positions of the collection of the shaded lozenges strictly below and above the symmetric ferns in the two pictures on the right in Figure \ref{fcc}). Therefore, by taking the ratio of \eqref{ecj} and \eqref{eck}, $\wt(\blacktriangleright_{z-y})$ and $\wt(\blacktriangleright_{w-y})$ cancel out, and we get
\begin{equation}\label{ecl}
\begin{aligned}
    &\frac{\M_{q}(A(x,y,z,w;a_1,\ldots,a_{2k+1}))}{\M_{q}(A(x,y,z,w;a))}\cdot \frac{\wt(F_{sym}(2a_{1},a_{2},\ldots,a_{2k+1}))}{\wt(F_{sym}(2a))}\\
    =&\frac{\M_q(H'_{0,a_{o},a_{e}+y-1,z+w,x-y+z}(Q_{e},Q_{o}\cup X\cup Y,B))}{\M_q(H'_{a_{o},0,a_{e}+y-1,z+w,x-y+z}(Q_{o}\cup Q_{e},X\cup Y,B))}\\
    =&\frac{\M_q(H'_{0,a_{o},a_{e}+y-1,z+w,x-y+z}(Q_{e},Q_{o}\cup X\cup Y))}{\M_q(H'_{a_{o},0,a_{e}+y-1,z+w,x-y+z}(Q_{o}\cup Q_{e},X\cup Y))},
\end{aligned}
\end{equation}
where the second equality is due to Remark \ref{tcc}.

\textbf{Case 2) $w\leq y\leq z$}

\begin{figure}
    \centering
    \includegraphics[width=0.88\textwidth]{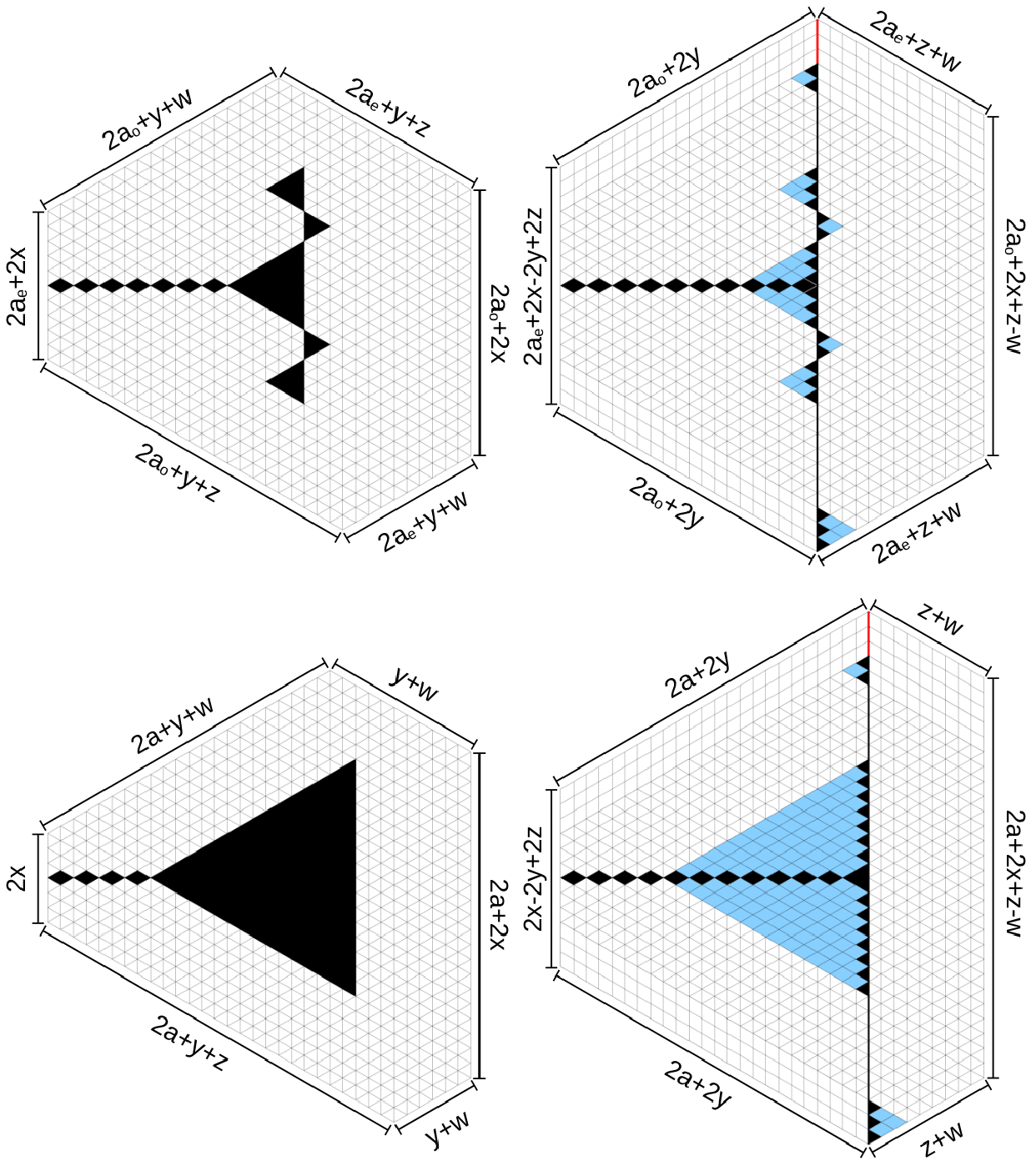}
    \caption{}
    \label{fcd}
\end{figure}

In this case, the fern axis lies between the two vertical axes through either of the top or the bottom vertex of the boundary hexagon (see the top left picture in Figure \ref{fcd}). This case, the region $A(x,y,z,q;a_{1},\ldots,a_{2k+1})$ has the same number of lozenge tilings as $H'_{0,a_{e},a_{o}+y-1,z+w,x-y+z}(Q_{o}\cup Y,Q_{e}\cup X,B)$, where $B=[2a+2x+2z]\setminus[2a+2x+y+z]-\frac{2a+2x+2z+1}{2}$, $X$ and $Y$ are as defined right after \eqref{ece}, and $Q_{o}$ and $Q_{e}$ are as defined in Section 2. This is because if one deletes every forced lozenge from $H'_{0,a_{e},a_{o}+y-1,z+w,x-y+z}(Q_{o}\cup Y,Q_{e}\cup X,B)$, one gets the region $A(x,y,z,w;a_{1},\ldots,a_{2k+1})$ (see the two pictures on the right in Figure \ref{fcd}). However, they do not have the same tiling generating functions again because forced lozenges that have horizontal long diagonals have weights different from $1$. If we keep track of the weight of forced lozenges, the weights of these two regions satisfy the following relation.
\begin{equation}\label{ecm}
    M_{q}(A(x,y,z,w;a_{1},\ldots,a_{2k+1}))=\frac{M_{q}(H'_{0,a_{e},a_{o}+y-1,z+w,x-y+z}(Q_{o}\cup Y,Q_{e}\cup X,B))}{\wt(F_{sym}(2a_{1},a_{2},\ldots,a_{2k+1}))\cdot \wt(\blacktriangleright_{z-y})\cdot\wt(\blacktriangleleft_{y-w})},
\end{equation}
where $\wt(\blacktriangleright_{z-y})$ is the product of weights of all the horizontal lozenges contained in the right-pointing triangle of size $z-y$ and $\wt(\blacktriangleleft_{y-w})$ is the product of weights of all the horizontal lozenges contained in the left-pointing triangle of size $y-w$ (in the top right picture in Figure \ref{fcd}, they are collections of shaded lozenges strictly below and above the symmetric fern, respectively). We then flip every right-pointing unit triangle contained in the symmetric fern, which are labeled by $Q_{e}$, following the flipping process described in Theorem \ref{tcb}. The resulting region is $H'_{a_{e},0,a_{o}+y-1,z+w,x-y+z}(Q_{o}\cup Q_{e}\cup Y,X,B)$, as can be seen by comparing the two pictures on the right in Figure \ref{fcd}. From the new region, if we delete all the forced lozenges, then we get $A(x,y,z,w;a)$ (see the two pictures at the bottom in Figure \ref{fcd}). If we keep track of the weight of every forced lozenge, then the tiling generating functions of these two regions are related by the following equation.
\begin{equation}\label{ecn}
    M_{q}(A(x,y,z,w;a))=\frac{M_{q}(H'_{a_{e},0,a_{o}+y-1,z+w,x-y+z}(Q_{o}\cup Q_{e}\cup Y,X,B))}{\wt(F_{sym}(2a))\cdot \wt(\blacktriangleright_{z-y})\cdot\wt(\blacktriangleleft_{y-w})},
\end{equation}
where $\wt(\blacktriangleright_{z-y})$ and $\wt(\blacktriangleleft_{y-w})$ are the same as in \eqref{ecm} (compare the positions of the shaded lozenges strictly below and above the symmetric ferns in the two pictures on the right in Figure \ref{fcd}). By taking the ratio of \eqref{ecm} and \eqref{ecn}, $\wt(\blacktriangleright_{z-y})$ and $\wt(\blacktriangleleft_{y-w})$ cancel out, and we get the desired identity. Note that, as in \eqref{ecl}, we need to use Remark \ref{tcc} to get rid of $B$.

\textbf{Case 3) $z < y$}

\begin{figure}
    \centering
    \includegraphics[width=0.88\textwidth]{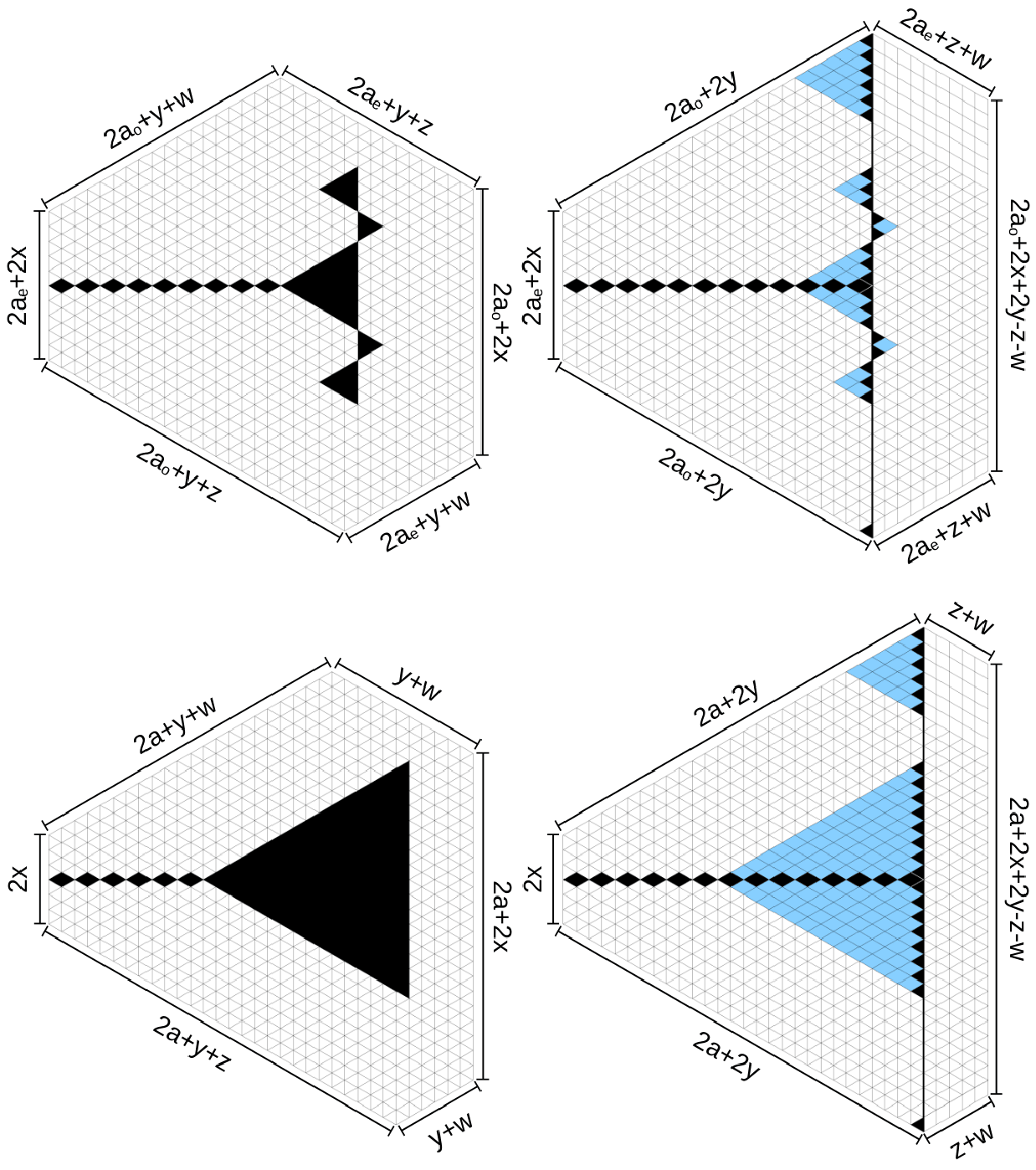}
    \caption{}
    \label{fce}
\end{figure}

In this case, both vertical lines through either of the top or the bottom vertex of the boundary hexagon are positioned strictly left to the fern axis, as shown in the top left picture in Figure \ref{fce}. In this case, we consider the region $H'_{0,a_{e},a_{o}+y-1,z+w,x}(Q_{o}\cup X\cup Y,Q_{e},\varnothing)$, which has the same number of lozenge tilings as $A(x,y,z,w;a_{1},\ldots,a_{2k+1})$ (compare the two pictures on the top in Figure \ref{fce}). Again, they do not have the same tiling generating functions, but instead satisfy the following equation.
\begin{equation}\label{eco}
    M_{q}(A(x,y,z,w;a_{1},\ldots,a_{2k+1}))=\frac{M_{q}(H'_{0,a_{e},a_{o}+y-1,z+w,x}(Q_{o}\cup X\cup Y,Q_{e},\varnothing))}{\wt(F_{sym}(2a_{1},a_{2},\ldots,a_{2k+1}))\cdot \wt(\blacktriangleleft_{y-z})\cdot\wt(\blacktriangleleft_{y-w})},
\end{equation}
where $\wt(\blacktriangleleft_{y-z})$ and $\wt(\blacktriangleleft_{y-w})$ are the products of weights of all horizontal lozenges contained in the left-pointing triangles of size $y-z$ and $y-w$, respectively. (See the top right picture in Figure \ref{fce}. They are collections of shaded lozenges strictly below and above the symmetric fern, respectively. In this example, $y-z=1$ and thus $\wt(\blacktriangleleft_{y-z})=1$.) As in the previous two cases, we flip every right-pointing unit triangle contained in the symmetric fern labeled by $Q_{e}$, following the flipping process described in Theorem \ref{tcb}. The resulting region is $H'_{a_{e},0,a_{o}+y-1,z+w,x}(Q_{o}\cup Q_{e}\cup X\cup Y,\varnothing,\varnothing)$ and this can be seen by comparing the two pictures on the right in Figure \ref{fce}. The new region contains several forced lozenges, and after removing all the forced lozenges, we get the region  $A(x,y,z,w;a)$. The tiling generating functions of these two regions satisfy the following relation due to the weight of forced lozenges.
\begin{equation}\label{ecp}
    M_{q}(A(x,y,z,w;a))=\frac{M_{q}(H'_{a_{e},0,a_{o}+y-1,z+w,x}(Q_{o}\cup Q_{e}\cup X\cup Y,\varnothing,\varnothing))}{\wt(F_{sym}(2a))\cdot \wt(\blacktriangleleft_{y-z})\cdot\wt(\blacktriangleleft_{y-w})},
\end{equation}
where $\wt(\blacktriangleleft_{y-z})$ and $\wt(\blacktriangleleft_{y-w})$ are the same as in the previous equation. Thus, if we take the ratio of \eqref{eco} and \eqref{ecp}, $\wt(\blacktriangleleft_{y-z})$ and $\wt(\blacktriangleleft_{y-w})$ cancel out, and using  Remark \ref{tcc}, we obtain the desired equation.
\end{proof}


\section{Preliminaries: tiling generating functions of some regions}\label{sec:4}

In this section, we gather some preliminary results that will be used in the combined proof of Theorems \ref{tca} and \ref{tcb}.

Let $x$, $y$ be any nonnegative integers and $Z$ be a subset of $[x+y]-\frac{x+y+1}{2}\coloneqq \{-\frac{x+y-1}{2},\dots,\frac{x+y-1}{2}\}$. We label the elements of $Z$ by $z_1, z_2, \dots$ so that $z_i < z_j$ if $i<j$. We then consider a trapezoid region with sides of length $x$, $y$, $x+y$, and $y$ clockwise from the left and denote it by $S_{x,y}$. We label the unit segments on the right side of $S_{x,y}$ by $-\frac{x+y-1}{2},-\frac{x+y-3}{2},\dots,\frac{x+y-3}{2},\frac{x+y-1}{2}$ from the bottom to the top. Now, we delete the left-pointing unit triangles whose labels of their right sides are in $Z$ from $S_{x,y}$ and denote the resulting region by $S_{x,y}(Z)$ (see the left picture in Figure \ref{fda}). We put the region on the $(i,j)$-plane so that the $j$-axis cuts through the horizontal symmetry axis of the boundary trapezoid $S_{x,y}$. Lastly, we assign weight to lozenges in this region: if the center of the horizontal lozenge has $i$ coordinate equal to $n$, we give weight $\frac{q^{n}+q^{-n}}{2}$ to the horizontal lozenge, and we give weight $1$ to every lozenge that is not horizontal. Let $\M_{q}(S_{x,y}(Z))$ be the tiling generating function of the region under this weight. The first lemma states that the tiling generating function $\M_{q}(S_{x,y}(Z))$ is given by a simple product formula.

\begin{figure}
    \centering
    \includegraphics[width=0.88\textwidth]{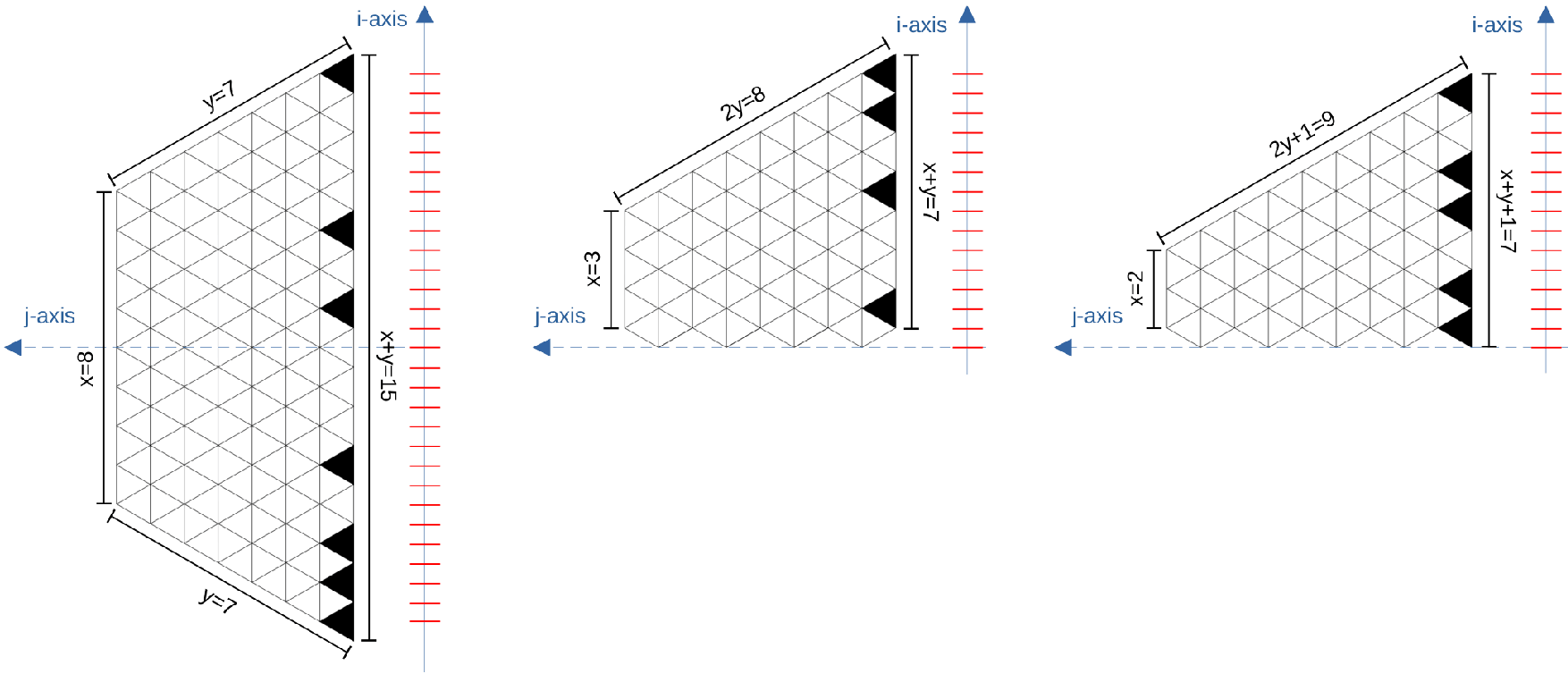}
    \caption{The regions $S_{8,7}(\{-7,-6,-5,-3,1,3,7\})$, $R_{3,8}(\{1,4,6,7\})$, and $R_{2,9}(\{\frac{1}{2},\frac{3}{2},\frac{7}{2},\frac{9}{2},\frac{13}{2}\})$ on the $(i,j)$-planes (from left to right).}
    \label{fda}
\end{figure}

\begin{lemma}\label{tda}
For any nonnegative integers $x$, $y$ and for any set $Z\subseteq [x+y]-\frac{x+y+1}{2}$, $\M_{q}(S_{x,y}(Z))=0$ unless $|Z|=y$. If $|Z|=y$, we have
\begin{equation}\label{eda}
    \M_{q}(S_{x,y}(Z))=\frac{\displaystyle\Delta_{1,1,q}(Z)}{\displaystyle\prod_{i=1}^{y-1}{\langle i\rangle_{q}!}},   
\end{equation}
where\footnote{This notation was already introduced in Section \ref{sec:3}. See the paragraph just before Theorem \ref{tca}.} $\displaystyle\Delta_{1,1,q}(Z)\coloneqq \prod_{z_1, z_2 \in Z, z_1 < z_2}\langle z_2+z_1\rangle_{q}^+\langle z_2-z_1\rangle_{q}$.
\end{lemma}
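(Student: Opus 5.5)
The plan is to prove \eqref{eda} by induction on $y$, using the interlacing (Gelfand--Tsetlin) structure of horizontal lozenges column by column together with a discrete-integration identity that matches the weight exactly.

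\textbf{Setup.} On the $(i,j)$-plane the $j$-axis is horizontal and $i$ measures height. Cut $S_{x,y}$ into $y$ vertical strips of unit-triangle width. If $Z$ is to be tileable, the strip adjacent to the right side contains exactly $y$ horizontal lozenges. Going one strip leftward, the number of horizontal lozenges drops by one and their positions interlace with those in the strip to the right. The leftmost strip contains none, since the left side has length $x$ and the right side has length $x+y$. This gives $|Z|=y$ as the necessary condition, which yields the vanishing statement at once.

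\textbf{Inductive step.} I would label the horizontal lozenges of a strip by the labels of the unit vertical segments they contain. A lozenge labelled $h$ then has centre at $i=2h$ and weight $\langle 2h\rangle_q^+$. After deleting the rightmost strip together with the horizontal lozenges in it, the remainder is $S_{x,y-1}(W)$, where $W$ is a $(y-1)$-set strictly interlacing with $Z$ and placed on the shifted half-integer labels. The non-horizontal lozenges in that strip are forced and have weight $1$. Hence
\[
\M_q(S_{x,y}(Z))=\Big(\prod_{z\in Z}\langle 2z\rangle_q^+\Big)\sum_{W\prec Z}\M_q(S_{x,y-1}(W)).
\]
I have to double-check whether the weights of the deleted strip are those of $Z$ or of $W$. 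The bookkeeping of which strip's lozenges carry the weights is the one point to verify carefully, but the structure of the argument is the same either way.

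\textbf{Evaluating the sum.} Set $v(s)\coloneqq q^{2s}-q^{-2s}$. The elementary identity
\[
\langle 2s_2+2s_1\rangle_q^+\langle 2s_2-2s_1\rangle_q=\frac{v(s_2)-v(s_1)}{2(q-q^{-1})}
\]
(and its analogue at the scale actually used in $\Delta_{1,1,q}$) shows that $\Delta_{1,1,q}$ is, up to a constant, the Vandermonde determinant $\det[v(z_j)^{i-1}]$. The key telescoping fact is
\[
v(h+\tfrac12)-v(h-\tfrac12)=2(q-q^{-1})\langle 2h\rangle_q^+ ,
\]
so the weight $\langle 2h\rangle_q^+$ is exactly the discrete derivative of $v$. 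Summing a determinant $\det[\phi_i(w_j)]$ over $W$ interlacing $Z$ equals, by multilinearity, a $y\times y$ determinant whose entries are indefinite sums. Because of the telescoping fact, each indefinite sum of a polynomial of degree $k$ in $v$ (against the weight $\langle 2h\rangle_q^+$) is a polynomial of degree $k+1$ in $v$ at the endpoints. Its leading coefficient carries an explicit factor, which comes out as $\langle k+1\rangle_q$ up to the normalisation. Row reduction then turns the determinant back into a Vandermonde determinant in the $v(z_j)$, so the result is $\Delta_{1,1,q}(Z)$ times a constant depending only on $y$.

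\textbf{Main obstacle.} The hard step is pinning down this constant: extracting the leading coefficients exactly and checking that they multiply to $1/\langle y-1\rangle_q!$. Together with the induction hypothesis this would give the normalisation $1/\prod_{i=1}^{y-1}\langle i\rangle_q!$. I would fix the constant by checking the base cases $y=0$ and $y=1$ directly; for $y=1$ a single horizontal lozenge gives the empty product $1$. The factors of $2$ and of $q-q^{-1}$ from the two identities above must cancel consistently. If the leading-coefficient bookkeeping proves messy, the fallback is to evaluate the whole sum as a single determinant via Lindstr\"om--Gessel--Viennot over the interlacing array and compare it with the $q$-analogue of the type-$B/C$ Weyl denominator.
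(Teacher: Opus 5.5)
Your approach works and is genuinely different from the paper's. However, the displayed recursion is wrong, and so is your remark that the choice of weights makes no difference.

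Write $\ell_0,\dots,\ell_y$ for the vertical lattice lines of $S_{x,y}$ from left to right, so $\ell_k$ has length $x+k$. The elements of $Z$ are dents on the boundary line $\ell_y$, not lozenges, and they carry no weight. The horizontal lozenges you remove together with the rightmost column are the $y-1$ lozenges crossing $\ell_{y-1}$, at labels $w\in W$. In general $\ell_k$ is crossed by exactly $k$ horizontal lozenges for $k\le y-1$; that is the clean form of your counting argument. So the recursion is
\[
\M_q(S_{x,y}(Z))=\sum_{W\prec Z}\Big(\prod_{w\in W}\langle 2w\rangle_q^+\Big)\M_q(S_{x,y-1}(W)),\qquad z_1<w_1<z_2<\cdots<w_{y-1}<z_y .
\]
Your version already fails for $x=0$, $y=2$, $Z=\{\pm\frac12\}$. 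It gives $\big(\frac{q+q^{-1}}{2}\big)^2$, whereas the unique tiling has a single horizontal lozenge at height $0$, of weight $1$. The structure is not the same either way: the telescoping step needs exactly the weight $\langle 2w\rangle_q^+$ attached to the summation variable. Since your evaluation paragraph already sums against $\langle 2h\rangle_q^+$, only the display needs correcting. Also, $S_{x,y-1}$ is still centred on the $j$-axis with labels $[x+y-1]-\frac{x+y}{2}$, so the induction hypothesis applies verbatim.

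The constant is then routine. It is not fixed by base cases, which only start the induction; it comes from leading coefficients.
\begin{itemize}
\item At the correct scale, $\langle s_2+s_1\rangle_q^+\langle s_2-s_1\rangle_q=\frac{v(s_2)-v(s_1)}{2(q-q^{-1})}$; your display with $2s_i$ is off by a factor $2$ in scale. Hence $\Delta_{1,1,q}(Z)=(2(q-q^{-1}))^{-\binom{y}{2}}\det[v(z_j)^{i-1}]$.
\item Set $f_j^{\pm}(s)=q^{2js}\pm q^{-2js}$. Then $f_j^{\mp}(w+\frac12)-f_j^{\mp}(w-\frac12)=(q^j-q^{-j})f_j^{\pm}(w)$, and $f_j^-$ ($j$ odd) and $f_j^+$ ($j$ even) are monic of degree $j$ in $v$.
\item Expanding $v^{i-1}\langle 2w\rangle_q^+$ in the $f_j^{\pm}$ gives top term $\frac12 f_i^{\pm}$ and no constant term. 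So it has an antiderivative $F_{i-1}$ that is a polynomial in $v$ of degree $i$, with leading coefficient $\frac{1}{2(q-q^{-1})\langle i\rangle_q}$.
\end{itemize}
The ranges of the $w_j$ are independent, so you can sum column by column, border with a row of ones, and row-reduce. This gives
\[
\sum_{W\prec Z}\Big(\prod_{w\in W}\langle 2w\rangle_q^+\Big)\Delta_{1,1,q}(W)=\frac{(2(q-q^{-1}))^{\binom{y}{2}}}{(2(q-q^{-1}))^{\binom{y-1}{2}+(y-1)}}\cdot\frac{\Delta_{1,1,q}(Z)}{\langle y-1\rangle_q!}=\frac{\Delta_{1,1,q}(Z)}{\langle y-1\rangle_q!},
\]
which closes the induction.

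As for the comparison: the paper proves the more general Theorem~\ref{tfa}, with weights $\frac{Xq^n+Yq^{-n}}{2}$ and axis at $i=k$, by Kuo condensation. It first reduces to $w_1=1$, $w_y=x+y$, then inducts on $x+y+t$ with separate base cases $x=0$, $y=0$, $t=0$. That route needs the formula in advance and a check that it satisfies the six-term recurrence \eqref{efc}. The shift $k$ is unavoidable there, because the regions in the reduction step and in \eqref{efc} are not centred on the $j$-axis.

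Your column-peeling keeps every trapezoid centred, so the plain weighting suffices and the induction is on $y$ alone. It also derives the product formula rather than verifying it, and $\Delta_{1,1,q}$ appears naturally as a Vandermonde determinant in $v(z)=q^{2z}-q^{-2z}$. In exchange, the paper's route yields the $(X,Y,k)$ version needed for Corollary~\ref{tfc} in one stroke. Your computation should adapt to that version by rescaling $q^{2s}$ by a constant.
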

The proof of Lemma \ref{tda} will be provided in the appendix (see Appendix \ref{sec:App}).

Next, we recall two results of the second author and Rohatgi \cite{LR2021TGF} regarding the tiling generating functions of quartered hexagons with some dents on their right sides. To state the lemma, we define two families of regions $R_{x,2y}(Z)$ and $R_{x,2y+1}(Z)$ as follows.

We first define $R_{x,2y}(Z)$. Let $x$, $y$ be any nonnegative integers and $Z$ be a subset of $[x+y]$. From any point on a triangular lattice, we move $x$ units to the north, $2y$ units to the northeast, and $x+y$ units to the south along the lattice lines. Then we move $2y$ units to the west via a zigzag path, alternating southwest and northwest directions. This closed path determines a region, which is roughly a quarter of a hexagon. We denote this region by $R_{x,2y}$. Then, we label the unit segments on the right side of this region by $1,2,\dots, x+y$ from the bottom to the top. For any subset $Z\subset[x+y]$, $R_{x,2y}(Z)$ is obtained from the region $R_{x,2y}$ by deleting left-pointing unit triangles whose labels of their right sides are in $Z$ (see the middle picture in Figure \ref{fda}). The region $R_{x,2y+1}(Z)$ is defined similarly as follows. From any point on a triangular lattice, we move $x$ units to the north, $2y+1$ units to the northeast, $y+1$ units to the south along the lattice lines. Then we move $2y+1$ units to the west via a zigzag path, alternating northwest and southwest directions. We denote the regions determined by this closed path by $R_{x,2y+1}$. Then we label the unit segments on the right side of this region by $\frac{1}{2},\frac{3}{2},\dots,x+y-\frac{1}{2}, x+y+\frac{1}{2}$ from the bottom to the top. Then, for any subset $Z\subset [x+y+1]-\frac{1}{2}$, the region $R_{x,2y+1}(Z)$ is obtained from $R_{x,2y+1}$ by deleting left-pointing unit triangles whose labels of their right sides are in $Z$ (see the right picture in Figure \ref{fda}). Now, we assign the weights to the lozenges on the regions as follows. We put both regions $R_{x,2y}(Z)$ and $R_{x,2y+1}(Z)$ on the $(i,j)$-plane so that the $j$-axis passes through all the bottommost vertices of the regions. If the center of the horizontal lozenge has $i$-coordinate $n$, then we give a weight $\frac{q^{n}+q^{-n}}{2}$ to the lozenge and give a weight $1$ to every lozenge that is not horizontal. Let $\M_{q}(R_{x,2y}(Z))$ and $\M_{q}(R_{x,2y+1}(Z))$ be the tiling generating functions of the two regions under these weight assignments. Lemma \ref{tdb} provides product formulas for these tiling generating functions. For any finite set $T \subset \mathbb{Z}^{+}$ (or $T \subset \mathbb{Z}^{+}-\frac{1}{2}$), we define $\displaystyle\Delta_{2,1,q}(T)\coloneqq \prod_{t_i, t_j\in T, t_i<t_j}\frac{\langle 2t_{j}+2t_{i}\rangle_{q}}{2}\frac{\langle 2t_{j}-2t_{i}\rangle_{q}}{2}$.

\begin{lemma}\label{tdb}
For any nonnegative integers $x$, $y$ and any set $Z\subseteq [x+y]$, $\M_{q}(R_{x,2y}(Z))=0$ unless $|Z|=y$. If $|Z|=y$, we have
\begin{equation}\label{edb}
    \M_{q}(R_{x,2y}(Z))=\frac{\displaystyle\Delta_{2,1,q}(Z) \cdot \displaystyle\prod_{z\in Z}\frac{\langle 2z\rangle_q}{2}}{\displaystyle\prod_{i=1}^{y}\langle 2i-1\rangle_q!}.
\end{equation}
Similarly, for any nonnegative integers $x$, $y$, and any set $Z\subset [x+y+1]-\frac{1}{2}$, $\M_{q}(R_{x,2y+1}(Z))=0$ unless $|Z|=y+1$. If $|Z|=y+1$, we have
\begin{equation}\label{edc}
    \M_{q}(R_{x,2y+1}(Z))=\frac{\displaystyle\Delta_{2,1,q}(Z)}{\displaystyle\prod_{i=1}^{y+1}\langle 2i-2\rangle_q!}.
\end{equation}
\end{lemma}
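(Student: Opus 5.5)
Lemma~\ref{tdb} is a recalled result from \cite{LR2021TGF}, so within the paper one can simply cite it. To reprove it, I would follow the strategy standard for such dented quartered hexagons: a balance argument for the vanishing statement, then induction using Kuo's graphical condensation \cite{kuo2004applications} for the product formula.

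\emph{Vanishing.} I would count up- and down-pointing unit triangles. A region admits a lozenge tiling only if it has equally many left- and right-pointing unit triangles. For the undented region $R_{x,2y}$, the two counts differ by exactly $x+y$ minus the number of dents that would balance it. Since each removed dent is a left-pointing triangle, balance forces $|Z|=y$. The same count applied to $R_{x,2y+1}$ forces $|Z|=y+1$. Hence $\M_q=0$ otherwise.

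\emph{Base cases.} I would check these directly.
\begin{itemize}
\item When $y=0$ in the even case, $Z=\varnothing$. The region is then a vertical strip, or empty, with a unique tiling by non-horizontal lozenges, so $\M_q=1$, matching \eqref{edb}.
\item When $x=0$, every unit segment on the right side is a dent. The region is then forced: each zigzag row is tiled uniquely, and all horizontal lozenges are forced. Comparing the forced weight with \eqref{edb}, resp.\ \eqref{edc}, with $Z=[y]$, resp.\ $Z=[y+1]-\frac12$, gives a telescoping product identity. The factors $\prod\langle 2i-1\rangle_q!$ and $\prod\langle 2i-2\rangle_q!$ should arise exactly here.
\end{itemize}

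\emph{Inductive step.} I would apply Kuo's condensation (the four-unit-triangle version) to $R_{x,2y}(Z)$ with $|Z|=y$. I would choose two unit triangles at the top and bottom of the right side, at labels $x+y$ and $1$, together with two further triangles adjacent to dents $z_i<z_j$, or at the corners of the zigzag boundary. This yields a recurrence of the form
\[
\M_q(R)\,\M_q(R_{\{\alpha,\beta,\gamma,\delta\}})=\M_q(R_{\alpha\beta})\,\M_q(R_{\gamma\delta})+\M_q(R_{\alpha\delta})\,\M_q(R_{\beta\gamma}).
\]
Each of the five other regions is, after removing forced lozenges, a region of the same family with smaller $x+y$ or fewer parameters. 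The removed forced lozenges sit in columns whose $i$-coordinates are known, so their weights $\langle n\rangle_q^{+}$ are explicit.

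It then remains to verify that the right side of \eqref{edb}, resp.\ \eqref{edc}, satisfies this recurrence. After cancellation, the $\Delta_{2,1,q}$ factors reduce to ratios involving only $\langle 2z_i\pm 2z_j\rangle_q$ for the dents affected by the condensation. The recurrence should then collapse to a three-term identity among $q$-numbers of the shape $\langle A\rangle_q\langle B\rangle_q=\langle C\rangle_q\langle D\rangle_q+\langle E\rangle_q\langle F\rangle_q$, which I would check by expanding in $q^{\pm1}$. I would treat the odd case $R_{x,2y+1}$ in parallel, with half-integer labels. There the factor $\prod\langle 2z\rangle_q/2$ disappears because the bottom boundary is of the other zigzag type.

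\emph{Main obstacle.} I expect the hard part to be the bookkeeping of forced horizontal lozenges and their weights $\frac{q^n+q^{-n}}{2}$ in the condensation step. Only with the correct weights does the recurrence become the clean $q$-identity above, including the stray factor $\prod_{z}\langle 2z\rangle_q/2$ in the even case. If this becomes unwieldy, my fallback would be an alternative route. One can encode tilings as families of nonintersecting lattice paths or half Gelfand--Tsetlin patterns of symplectic (resp.\ odd orthogonal) type. The weighted count then becomes a principally specialized symplectic (resp.\ orthogonal) character, which the $q$-Weyl dimension formula evaluates as $\Delta_{2,1,q}(Z)$ times the stated normalizations.
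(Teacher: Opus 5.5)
\textbf{The citation.} Citing Theorems 2.1 and 2.2 of \cite{LR2021TGF} is exactly what the paper does. The only extra step is translating their $q$-analogue $[n]_q=\frac{1-q^n}{1-q}$ into $\langle n\rangle_q$, which the paper remarks on explicitly. As a proof within the paper, your proposal is therefore fine.

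\textbf{The self-contained reproof.} This part is only a template, and its one substantive step fails as you describe it.

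A minor point first. In the vanishing argument the imbalance is simply $y$ (resp.\ $y+1$). Counting columns from the left, each odd column contains one more left-pointing than right-pointing triangle, and each even column is balanced. Your phrase ``differ by exactly $x+y$ minus the number of dents that would balance it'' is circular, though the conclusion $|Z|=y$ (resp.\ $y+1$) is right.

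The real problem is the condensation step. Every unit triangle sharing an edge with the right side is left-pointing. So the triangles at labels $1$ and $x+y$ have the same color, and either may already be a dent. For the alternating identity you wrote, one of the two right-pointing vertices must therefore lie on the right side strictly between them, which means it is exposed by some dent $z$. Removing it forces a chain of weight-one lozenges from $z$ to the next dent. This leaves a one-column notch in the right boundary, so the resulting region is not of the form $R_{x',2y'}(Z')$, and the recursion does not close.

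A workable scheme would be modeled on the appendix proof of Lemma~\ref{tda}:
\begin{enumerate}
\item apply the version of Kuo's theorem with $|V_1|=|V_2|+1$ (Theorem~\ref{tfb}) to a region with one dent filled in;
\item put $v,w$ at right-side positions;
\item place $u,s$ so that the lozenges they force shrink the region to smaller quartered hexagons.
\end{enumerate}
You would still have to exhibit such a choice, track the weights of the forced horizontal lozenges, and actually verify that \eqref{edb} and \eqref{edc} satisfy the resulting recurrence. At present you only assert that it ``should'' collapse to a three-term $q$-identity.

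\textbf{The character-theoretic fallback.} This also needs an argument. Horizontal lozenges carry the binomial weight $\frac{q^n+q^{-n}}{2}$, not a monomial. Identifying $\M_q$ with a principally specialized symplectic or orthogonal character is therefore not a routine rewriting.
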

The results in this lemma are Theorems 2.1 and 2.2 in \cite{LR2021TGF}. Note that our formulas in \eqref{edb} and \eqref{edc} look different from the formulas in Theorems 2.1 and 2.2 of \cite{LR2021TGF}. This is because two papers are using different $q$-analogue: while we are using $\langle n\rangle_q\coloneqq\frac{q^{n}-q^{-n}}{q-q^{-1}}$, the second author and Rohatgi used $[n]_{q}\coloneqq\frac{1-q^{n}}{1-q}$.

We finish this section with one more lemma, which requires the following notations. For any disjoint finite sets $S,T \subset \mathbb{Z}$ (or $S,T \subset \mathbb{Z}-\frac{1}{2}$), we define $\displaystyle\Delta_{1,2,q}(S,T)$ as follows:
\begin{equation*}
    \displaystyle\Delta_{1,2,q}(S,T)\coloneqq \prod_{s\in S, t\in T, s < t}\langle t+s\rangle_{q}^{+}\langle t-s\rangle_{q}\cdot\prod_{s\in S, t\in T, t < s}\langle s+t\rangle_{q}^{+}\langle s-t\rangle_{q}. 
\end{equation*}

Also, for any disjoint finite sets $S,T \subset \mathbb{Z}^{+}$ (or $S,T \subset \mathbb{Z}^{+}-\frac{1}{2}$), we define $\displaystyle\Delta_{2}(S,T)$ as follows:
\begin{equation*}
    \displaystyle\Delta_{2,2,q}(S,T)\coloneqq \prod_{s\in S, t\in T, s<t}\frac{\langle 2t+2s\rangle_{q}}{2}\frac{\langle 2t-2s\rangle_{q}}{2}\cdot\prod_{s\in S, t\in T, t<s}\frac{\langle 2s+2t\rangle_{q}}{2}\frac{\langle 2s-2t\rangle_{q}}{2}.
\end{equation*}

\begin{lemma}\label{tdc}
The following statements hold.

(1) Let $B, C\subset\mathbb{Z}$ (or $B, C\subset\mathbb{Z}+\frac{1}{2}$) be disjoint finite sets. Then
\begin{equation}\label{edd}
    \Delta_{1,1,q}(B\cup C)=\Delta_{1,1,q}(B)\cdot\Delta_{1,2,q}(B,C)\cdot\Delta_{1,1,q}(C).
\end{equation}

(2) Let $B, C, D\subset\mathbb{Z}$ (or $B, C, D\subset\mathbb{Z}+\frac{1}{2}$) be mutually disjoint finite sets. Then
\begin{equation}\label{ede}
    \Delta_{1,2,q}(B\cup C,D)=\Delta_{1,2,q}(B,D)\cdot\Delta_{1,2,q}(C,D).
\end{equation}

(3) Let $B, C\subset\mathbb{Z}^{+}$ (or $B, C\subset\mathbb{Z}^{+}-\frac{1}{2}$) be disjoint finite sets. Then
\begin{equation}\label{edf}
    \Delta_{2,1,q}(B\cup C)=\Delta_{2,1,q}(B)\cdot\Delta_{2,2,q}(B,C)\cdot\Delta_{2,1,q}(C).
\end{equation}

(4) Let $B, C, D\subset\mathbb{Z}^{+}$ (or $B, C, D\subset\mathbb{Z}^{+}-\frac{1}{2}$) be mutually disjoint finite sets. Then
\begin{equation}\label{edg}
    \Delta_{2,2,q}(B\cup C,D)=\Delta_{2,2,q}(B,D)\cdot\Delta_{2,2,q}(C,D).
\end{equation}

(5) Let $B^{+}\subset\mathbb{Z}^{+}$ (or $B^{+}\subset\mathbb{Z}^{+}-\frac{1}{2}$) be a finite set, and $B\coloneqq B^{+}\cup(-B^{+})$. Then
\begin{equation}\label{edh}
    \Delta_{1,1,q}(B)=[\Delta_{2,1,q}(B^{+})]^2\cdot\prod_{b\in B^{+}} \langle 2b\rangle_{q}. 
\end{equation}

(6) Let $B^{+},C\subset\mathbb{Z}^{+}$ and $R\subset\mathbb{Z}^{-}$ (or $B^{+},C\subset\mathbb{Z}^{+}-\frac{1}{2}$ and $R\subset\mathbb{Z}^{-}+\frac{1}{2}$) be finite sets. Let $B\coloneqq B^{+}\cup(-B^{+})$ and suppose that $C\cup R$ and $B$ are disjoint. Then
\begin{equation}\label{edi}
    \Delta_{1,2,q}(C\cup R, B)=\Delta_{2,2,q}(C, B^{+})\cdot\Delta_{2,2,q}(-R,B^{+}).
\end{equation}
\end{lemma}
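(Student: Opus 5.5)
Every part of the lemma is an identity between finite products of pairwise factors, so I will prove each part by splitting the index set of pairs. The definitions are built so that each unordered pair contributes one factor $f(s,t)$. For $\Delta_{1,1,q}$ and $\Delta_{1,2,q}$ this factor is $\langle \max+\min\rangle_q^+\langle \max-\min\rangle_q$. For $\Delta_{2,1,q}$ and $\Delta_{2,2,q}$ it is $\frac{\langle 2\max+2\min\rangle_q}{2}\frac{\langle 2\max-2\min\rangle_q}{2}$. Since $\langle\cdot\rangle_q^+$ is even, $f(s,t)$ depends only on the unordered pair $\{s,t\}$.

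For parts (1)--(4), I will partition the unordered pairs in $B\cup C$ according to whether both elements lie in $B$, both lie in $C$, or one lies in each. The mixed pairs are exactly the pairs indexed by the two products in the definition of $\Delta_{1,2,q}(B,C)$ (resp. $\Delta_{2,2,q}(B,C)$), according to whether $s<t$ or $t<s$. This gives (1) and (3). For (2) and (4), the pairs with one element in $B\cup C$ and one in $D$ split disjointly according to whether the first element lies in $B$ or in $C$.

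For (5), I will group the unordered pairs of $B=B^+\cup(-B^+)$ into three types.
\begin{itemize}
\item Pairs $\{-b,b\}$ contribute $\langle 0\rangle_q^+\langle 2b\rangle_q=\langle 2b\rangle_q$, which accounts for $\prod_{b\in B^+}\langle 2b\rangle_q$.
\item For $b_1<b_2$ in $B^+$, the four pairs $\{b_1,b_2\}$, $\{-b_1,-b_2\}$, $\{-b_1,b_2\}$, $\{b_1,-b_2\}$ contribute
$\langle b_1+b_2\rangle^+\langle b_2-b_1\rangle\cdot\langle b_1+b_2\rangle^+\langle b_2-b_1\rangle\cdot\langle b_2-b_1\rangle^+\langle b_1+b_2\rangle\cdot\langle b_2-b_1\rangle^+\langle b_1+b_2\rangle$. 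For the second and fourth pairs the larger element is the negative one, so I must check the signs. The difference $\max-\min$ is always positive, and $\langle\cdot\rangle^+$ is even, so no sign appears.
\item Applying the key identity $\langle n\rangle_q\langle n\rangle_q^+=\frac{\langle 2n\rangle_q}{2}$ with $n=b_1+b_2$ and with $n=b_2-b_1$, the product of these four contributions equals $\left[\frac{\langle 2b_1+2b_2\rangle_q}{2}\frac{\langle 2b_2-2b_1\rangle_q}{2}\right]^2$. This is the square of the corresponding factor of $\Delta_{2,1,q}(B^+)$.
\end{itemize}
The identity itself follows from $(q^n-q^{-n})(q^n+q^{-n})=q^{2n}-q^{-2n}$.

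For (6), I will apply (2) first to reduce to computing $\Delta_{1,2,q}(C,B)$ and $\Delta_{1,2,q}(R,B)$ separately. For a fixed $c\in C$ and $b\in B^+$, the two pairs $\{c,b\}$ and $\{c,-b\}$ contribute $\langle c+b\rangle^+\langle |c-b|\rangle\cdot\langle |c-b|\rangle^+\langle c+b\rangle$. Here I use that $\langle\cdot\rangle^+$ is even and that the $\langle\cdot\rangle$ factor always carries $\max-\min>0$. By the same identity, this product equals $\frac{\langle 2c+2b\rangle}{2}\frac{\langle 2|c-b|\rangle}{2}$, which is the $\Delta_{2,2,q}(C,B^+)$ factor.

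For $r\in R$, I will set $r'=-r>0$. The map $x\mapsto -x$ sends $B$ to itself, and $f(s,t)$ is invariant under simultaneous negation of both entries. Hence $\Delta_{1,2,q}(R,B)=\Delta_{1,2,q}(-R,B)$, and the previous computation applies. The disjointness hypotheses keep every factor nonzero and keep the pairs well defined.

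The only real obstacle is sign and ordering bookkeeping. I need to verify that in every case the $\langle\cdot\rangle_q$ factor receives a positive argument while the argument of $\langle\cdot\rangle^+_q$ may have either sign. I will do this by writing the factor for a pair as a function of the unordered pair, as above, before splitting.
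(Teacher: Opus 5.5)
Your proof is correct and is exactly the pair-by-pair bookkeeping the paper has in mind when it omits the argument: it rests on the evenness of $\langle\cdot\rangle_q^{+}$ and the doubling identity $\langle n\rangle_q^{+}\langle n\rangle_q=\frac{1}{2}\langle 2n\rangle_q$, which you state correctly (the paper's hint writes $\langle 2n\rangle_q^{+}$ on the right, which is a typo). The only blemish is a wording slip in (5): the pair $\{b_1,-b_2\}$ has larger element $b_1>0$, so what you mean is that the element of larger absolute value is negative, and the factors you actually write down are right.
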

The proof of the above lemma is straightforward (one needs to use $\langle n \rangle_{q}^{+}=\langle -n \rangle_{q}^{+}$ and $\langle n \rangle_{q}^{+}\langle n \rangle_{q}=\frac{1}{2}\langle 2n \rangle_{q}^{+}$ for integers $n$), so we omit it.


\section{Proof of Theorems \ref{tca} and \ref{tcb}}\label{sec:5}

In this section, we give a proof of Theorems \ref{tca} and \ref{tcb} using Lemmas \ref{tda}-\ref{tdc}.

\begin{proof}[Proof of Theorems \ref{tca} and \ref{tcb}]
For any set of real numbers $A$, we set $A^{+}\coloneqq A\cap \mathbb{R}^{+}$ and $A^{-}\coloneqq A\cap \mathbb{R}^{-}$. Also, recall that $-A\coloneqq \{-a|a\in A\}$.

We first observe some properties of sets $L_1$, $L_2$, $R_1$, and $R_2$. From the construction of the sets $L_2$ and $R_2$, four sets $L_1$, $L_2$, $R_1$, and $R_2$  satisfy the following conditions: $L_1\cup R_1=L_2\cup R_2, L_1\cap R_1=L_2\cap R_2$. Also, since we flipped unit triangles in a symmetric way, the sets $L_1\setminus L_2 = R_2\setminus R_1$ and $R_1\setminus R_2 = L_2\setminus L_1$ are symmetric, i.e., $L_1\setminus L_2=-(L_1\setminus L_2)$ and $R_1\setminus R_2=-(R_1\setminus R_2)$.

Now, we set $L_{fix}\coloneqq L_1\cap L_2$, $L_{flip}\coloneqq L_1\setminus L_2$, $R_{fix}\coloneqq R_1\cap R_2$, and $R_{flip}\coloneqq R_1\setminus R_2$. Using these newly defined sets, we can express the four sets $L_1$, $L_2$, $R_1$, and $R_2$ as follows:
\begin{equation*}
L_1=L_{fix}\cup L_{flip}, R_1=R_{fix}\cup R_{flip}, L_2=L_{fix}\cup R_{flip}, \text{ and } R_2=R_{fix}\cup L_{flip}.
\end{equation*}
Furthermore, we know that the two sets $L_{flip}$ and $R_{flip}$ are symmetric, i.e. $L_{flip}=-L_{flip}$ and $R_{flip}=-R_{flip}$, and have cardinalities $2m$ and $2n$, respectively. Note that it implies $-L_{flip}^{-}=L_{flip}^{+}$ and $-R_{flip}^{-}=R_{flip}^{+}$.

\begin{figure}
    \centering
    \includegraphics[width=0.88\textwidth]{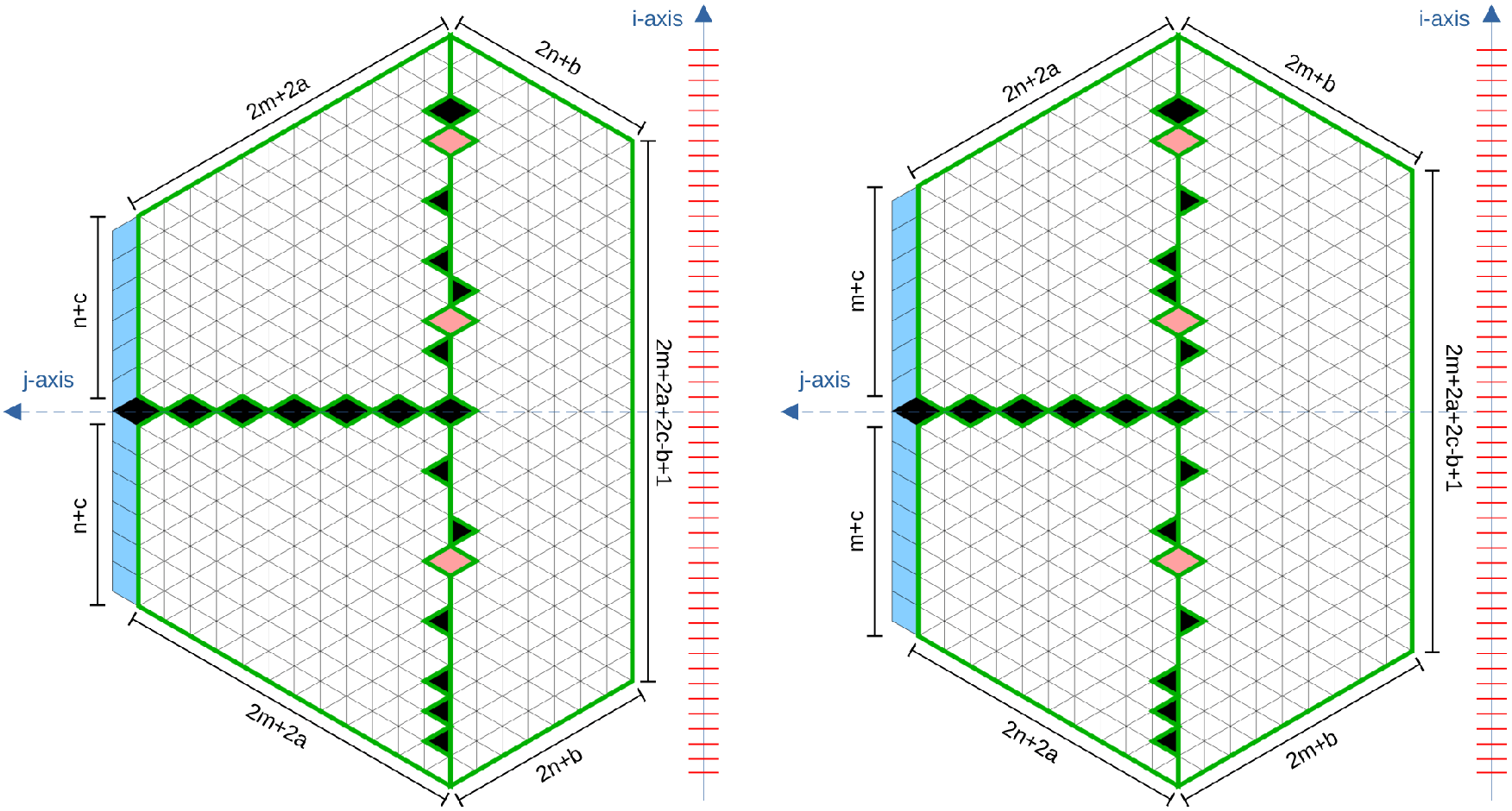}
    \caption{The pictures show how the $H$-regions in Figure \ref{fca} are split into three subregions. In these pictures, $C=\{-5, 3, 9\}$ (see \eqref{eeb} and \eqref{eed}) and the corresponding horizontal lozenges are marked by shaded lozenges across the vertical diagonal.}
    \label{fea}
\end{figure}

We first prove Theorem \ref{tca}. We partition the set of lozenge tilings of the two subregions according to the positions of the horizontal lozenges that are placed across the vertical diagonal. Using this partitioning, the tiling generating function of the region $H_{m,n,a,b,c}(L_1, R_1, B)$ can be expressed as follows.
\begin{equation}\label{eea}
    \M_{q}(H_{m,n,a,b,c}(L_1, R_1, B))=\displaystyle\sum_{C} \Bigg[\prod_{i\in C} \frac{q^{2i}+q^{-2i}}{2}\Bigg] \cdot \M_{q}(H_{m,n,a,b,c}(L_1\cup C, R_1\cup C, B)),
\end{equation}
where $C$ represents the set of labels of all unit segments covered by the horizontal lozenges on the vertical diagonal and the sum in \eqref{eea} runs over all subsets $C$ of $\{-(m+n+a+c), -(m+n+a+c)+1,\dots, m+n+a+c-1, m+n+a+c\}$ such that $|C|=2m+2a-|L_1|$, $C\cap(L_1\cup R_1\cup B)=\varnothing$, and $\M_{q}(H_{m,n,a,b,c}(L_1\cup C, R_1\cup C, B))\neq0$.

Since lozenge tilings of $H_{m,n,a,b,c}(L_1\cup C, R_1\cup C, B)$ cannot have any horizontal lozenges across the vertical diagonal of the region, the tiling generating function $\M_{q}(H_{m,n,a,b,c}(L_1\cup C, R_1\cup C, B))$ can be written as a product of the tiling generating functions of three subregions separated by the vertical diagonal as follows.
\begin{equation}\label{eeb}
\begin{aligned}
    &\M_{q}(H_{m,n,a,b,c}(L_1\cup C, R_1\cup C, B))\\
    &=\M_{q}(R_{n+c, 2m+2a}(-(L_{1}^{-}\cup C^{-})))\cdot\M_{q}(R_{n+c, 2m+2a}(L_{1}^{+}\cup C^{+}))\cdot\M_{q}(S_{2m+2a+2c-b+1, 2n+b}(R_1\cup C)).   
\end{aligned}
\end{equation}
In \eqref{eeb}, we used the fact that the leftmost strips of the regions are uniquely tiled by lozenges weighted by $1$, so we can instead consider the tiling generating functions of the subregions after deleting the leftmost strips (see the left picture in Figure \ref{fea}.)

Similarly, we can also partition the set of lozenge tilings of $H_{n,m,a,b,c}(L_2, R_2, B)$ according to the positions of the horizontal lozenges placed on the vertical diagonal. Thus, we have
\begin{equation}\label{eec}
    \M_{q}(H_{n,m,a,b,c}(L_2, R_2, B))=\displaystyle\sum_{C} \Bigg[\prod_{i\in C} \frac{q^{2i}+q^{-2i}}{2}\Bigg] \cdot \M_{q}(H_{n,m,a,b,c}(L_2\cup C, R_2\cup C, B)),
\end{equation}
where $C$ represents the set labels of all unit segments covered by the horizontal lozenges on the vertical diagonal and the sum in \eqref{eec} runs over all subsets $C$ of $\{-(m+n+a+c), -(m+n+a+c)+1,\dots, m+n+a+c-1, m+n+a+c\}$ such that $|C|=2n+2a-|L_2|$, $C\cap(L_2\cup R_2\cup B)=\varnothing$, and $\M_{q}(H_{n,m,a,b,c}(L_2\cup C, R_2\cup C, B))\neq0$.

Like \eqref{eeb}, the latter term in the summand in \eqref{eec} can also be expressed as a product of the tiling generating functions of three subregions separated by the vertical diagonal as follows.
\begin{equation}\label{eed}
\begin{aligned}
    &\M_{q}(H_{n,m,a,b,c}(L_2\cup C, R_2\cup C, B))\\
    &=\M_{q}(R_{m+c, 2n+2a}(-(L_{2}^{-}\cup C^{-})))\cdot\M_{q}(R_{m+c, 2n+2a}(L_{2}^{+}\cup C^{+}))\cdot\M_{q}(S_{2n+2a+2c-b+1, 2m+b}(R_2\cup C)).    
\end{aligned}
\end{equation}

Note that $2n+2a-|L_2|=2n+2a-(|L_{fix}|+|R_{flip}|)=2n+2a-(|L_{fix}|+2n)=2m+2a-(|L_{fix}|+2m)=2m+2a-(|L_{fix}|+|L_{flip}|)=2m+2a-|L_1|$. Also,
\begin{equation*}
\begin{aligned}
    &\M_{q}(H_{m,n,a,b,c}(L_1\cup C, R_1\cup C, B))\neq0.\\
    \iff&
    \begin{aligned}
        &\M_{q}(R_{n+c, 2m+2a}(-(L_{1}^{-}\cup C^{-})))\neq0, \M_{q}(R_{n+c, 2m+2a}(L_{1}^{+}\cup C^{+}))\neq0,\text{ and }\\
        &\M_{q}(S_{2m+2a+2c-b+1, 2n+b}(R_1\cup C))\neq0.
    \end{aligned}\\
    \iff& |L_{1}^{-}\cup C^{-}|=|L_{1}^{+}\cup C^{+}|=m+a \text{ and } |R_1\cup C|=2n+b.\\
    \iff& |L_{2}^{-}\cup C^{-}|=|L_{2}^{+}\cup C^{+}|=n+a \text{ and } |R_2\cup C|=2m+b.\\
    \iff& 
    \begin{aligned}
        &\M_{q}(R_{m+c, 2n+2a}(-(L_{2}^{-}\cup C^{-})))\neq0, \M_{q}(R_{m+c, 2n+2a}(L_{2}^{+}\cup C^{+}))\neq0,\text{ and }\\
        &\M_{q}(S_{2n+2a+2c-b+1, 2m+b}(R_2\cup C))\neq0.    
    \end{aligned}\\
    \iff& \M_{q}(H_{n,m,a,b,c}(L_2\cup C, R_2\cup C, B))\neq0.
\end{aligned}
\end{equation*}

From this observation, we can conclude that the sums in \eqref{eea} and \eqref{eec} run over the same sets.
Also, the assumption $\M_{q}(H_{m,n,a,b,c}(L_1, R_1, B))\neq 0$ in Theorem \ref{tca} guarantees the existence of $C$ that makes the summands in \eqref{eea} and \eqref{eec} nonzero.
For any such $C$, we will simplify the ratio between the corresponding summands, using Lemmas \ref{tda} and \ref{tdb}. By \eqref{eeb}, \eqref{eed}, and the lemmas,
\begin{equation}\label{eee}
\begin{aligned}
    &\frac{\Bigg[\displaystyle\prod_{i\in C} \frac{q^{2i}+q^{-2i}}{2}\Bigg] \cdot \M_{q}(H_{n,m,a,b,c}(L_2\cup C, R_2\cup C, B))}{\Bigg[\displaystyle\prod_{i\in C} \frac{q^{2i}+q^{-2i}}{2}\Bigg] \cdot \M_{q}(H_{m,n,a,b,c}(L_1\cup C, R_1\cup C, B))}\\
    =&\frac{\M_{q}(R_{m+c, 2n+2a}(-(L_{2}^{-}\cup C^{-})))\cdot\M_{q}(R_{m+c, 2n+2a}(L_{2}^{+}\cup C^{+}))\cdot\M_{q}(S_{2n+2a+2c-b+1, 2m+b}(R_2\cup C))}{\M_{q}(R_{n+c, 2m+2a}(-(L_{1}^{-}\cup C^{-})))\cdot\M_{q}(R_{n+c, 2m+2a}(L_{1}^{+}\cup C^{+}))\cdot\M_{q}(S_{2m+2a+2c-b+1, 2n+b}(R_1\cup C))}\\
    =&\frac{\frac{\displaystyle\Delta_{2,1,q}(-(L_{2}^{-}\cup C^{-})) \cdot \displaystyle\prod_{z\in-(L_{2}^{-}\cup C^{-})}\frac{\langle 2z\rangle_q}{2}}{\displaystyle\prod_{i=1}^{n+a}\langle 2i-1\rangle_q!}\cdot\frac{\displaystyle\Delta_{2,1,q}(L_{2}^{+}\cup C^{+}) \cdot \displaystyle\prod_{z\in L_{2}^{+}\cup C^{+}}\frac{\langle 2z\rangle_q}{2}}{\displaystyle\prod_{i=1}^{n+a}\langle 2i-1\rangle_q!}\cdot\frac{\displaystyle\Delta_{1,1,q}(R_{2}\cup C)}{\displaystyle\prod_{i=1}^{2m+b-1}{\langle i\rangle_{q}!}}}{\frac{\displaystyle\Delta_{2,1,q}(-(L_{1}^{-}\cup C^{-})) \cdot \displaystyle\prod_{z\in-(L_{1}^{-}\cup C^{-})}\frac{\langle 2z\rangle_q}{2}}{\displaystyle\prod_{i=1}^{m+a}\langle 2i-1\rangle_q!}\cdot\frac{\displaystyle\Delta_{2,1,q}(L_{1}^{+}\cup C^{+}) \cdot \displaystyle\prod_{z\in L_{1}^{+}\cup C^{+}}\frac{\langle 2z\rangle_q}{2}}{\displaystyle\prod_{i=1}^{m+a}\langle 2i-1\rangle_q!}\cdot\frac{\displaystyle\Delta_{1,1,q}(R_{1}\cup C)}{\displaystyle\prod_{i=1}^{2n+b-1}{\langle i\rangle_{q}!}}}\\
    =&\frac{\Bigg[\displaystyle\prod_{i=1}^{m+a}\langle 2i-1\rangle_q!\Bigg]^2}{\Bigg[\displaystyle\prod_{i=1}^{n+a}\langle 2i-1\rangle_q!\Bigg]^{2}}\cdot\frac{\displaystyle\prod_{i=1}^{2n+b-1}{\langle i\rangle_{q}!}}{\displaystyle\prod_{i=1}^{2m+b-1}{\langle i\rangle_{q}!}}\cdot\frac{\displaystyle\prod_{z\in L_{2}}\frac{\langle 2|z|\rangle_{q}}{2}}{\displaystyle\prod_{z\in L_{1}}\frac{\langle 2|z|\rangle_{q}}{2}}\cdot\frac{\Delta_{2,1,q}(-(L_{2}^{-}\cup C^{-}))\Delta_{2,1,q}(L_{2}^{+}\cup C^{+})\Delta_{1,1,q}(R_{2}\cup C)}{\Delta_{2,1,q}(-(L_{1}^{-}\cup C^{-}))\Delta_{2,1,q}(L_{1}^{+}\cup C^{+})\Delta_{1,1,q}(R_{1}\cup C)}.
\end{aligned}
\end{equation}

Using the the properties of $\Delta_{1,1,q}$, $\Delta_{1,2,q}$, $\Delta_{2,1,q}$, $\Delta_{2,2,q}$ in Lemma \ref{tdc} and the fact $-L_{flip}^{-}=L_{flip}^{+}$ and $-R_{flip}^{-}=R_{flip}^{+}$, we can further simplify the latter term in \eqref{eee} as follows:

\begin{equation}\label{eef}
\begin{aligned}
    &\frac{\Delta_{2,1,q}(-(L_{2}^{-}\cup C^{-}))\Delta_{2,1,q}(L_{2}^{+}\cup C^{+})\Delta_{1,1,q}(R_{2}\cup C)}{\Delta_{2,1,q}(-(L_{1}^{-}\cup C^{-}))\Delta_{2,1,q}(L_{1}^{+}\cup C^{+})\Delta_{1,1,q}(R_{1}\cup C)}\\
    =&\frac{\Delta_{2,1,q}(-((L^{-}_{fix}\cup R^{-}_{flip})\cup C^{-}))\Delta_{2,1,q}((L^{+}_{fix}\cup R^{+}_{flip})\cup C^{+})\Delta_{1,1,q}((R_{fix}\cup L_{flip})\cup C)}{\Delta_{2,1,q}(-((L^{-}_{fix}\cup L^{-}_{flip})\cup C^{-}))\Delta_{2,1,q}((L^{+}_{fix}\cup L^{+}_{flip})\cup C^{+})\Delta_{1,1,q}((R_{fix}\cup R_{flip})\cup C)}\\
    =&\frac{\Delta_{2,1,q}(-((L^{-}_{fix}\cup C^{-})\cup R^{-}_{flip}))\Delta_{2,1,q}((L^{+}_{fix}\cup C^{+})\cup R^{+}_{flip})\Delta_{1,1,q}((R_{fix}\cup C) \cup L_{flip})}{\Delta_{2,1,q}(-((L^{-}_{fix}\cup C^{-})\cup L^{-}_{flip}))\Delta_{2,1,q}((L^{+}_{fix}\cup C^{+})\cup L^{+}_{flip})\Delta_{1,1,q}((R_{fix}\cup C)\cup R_{flip})}\\
    =&\frac{\Delta_{2,1,q}(-(L^{-}_{fix}\cup C^{-}))\Delta_{2,2,q}(-L^{-}_{fix}, -R^{-}_{flip})\Delta_{2,2,q}(-C^{-}, -R^{-}_{flip})\Delta_{2,1,q}(-R^{-}_{flip})}{\Delta_{2,1,q}(-(L^{-}_{fix}\cup C^{-}))\Delta_{2,2,q}(-L^{-}_{fix}, -L^{-}_{flip})\Delta_{2,2,q}(-C^{-}, -L^{-}_{flip})\Delta_{2,1,q}(-L^{-}_{flip})}\\
    &\cdot\frac{\Delta_{2,1,q}(L^{+}_{fix}\cup C^{+})\Delta_{2,2,q}(L^{+}_{fix}, R^{+}_{flip})\Delta_{2,2,q}(C^{+}, R^{+}_{flip})\Delta_{2,1,q}(R^{+}_{flip})}{\Delta_{2,1,q}(L^{+}_{fix}\cup C^{+})\Delta_{2,2,q}(L^{+}_{fix}, L^{+}_{flip})\Delta_{2,2,q}(C^{+}, L^{+}_{flip})\Delta_{2,1,q}(L^{+}_{flip})}\\
    &\cdot\frac{\Delta_{1,1,q}(R_{fix}\cup C)\Delta_{1,2,q}(R_{fix}, L_{flip})\Delta_{1,2,q}(C, L_{flip})\Delta_{1,1,q}(L_{flip})}{\Delta_{1,1,q}(R_{fix}\cup C)\Delta_{1,2,q}(R_{fix}, R_{flip})\Delta_{1,2,q}(C, R_{flip})\Delta_{1,1,q}(R_{flip})}\\
    =&
    \frac{\Delta_{1,1,q}(L_{flip})}{\Delta_{2,1,q}(L^{+}_{flip})\Delta_{2,1,q}(-L^{-}_{flip})}\cdot\frac{\Delta_{2,1,q}(R^{+}_{flip})\Delta_{2,1,q}(-R^{-}_{flip})}{\Delta_{1,1,q}(R_{flip})}\\
    &\cdot\frac{\Delta_{2,2,q}(-L^{-}_{fix}, -R^{-}_{flip})\Delta_{2,2,q}(L^{+}_{fix}, R^{+}_{flip})}{\Delta_{2,2,q}(-L^{-}_{fix}, -L^{-}_{flip})\Delta_{2,2,q}(L^{+}_{fix}, L^{+}_{flip})}\cdot\frac{\Delta_{1,2,q}(R_{fix}, L_{flip})}{\Delta_{1,2,q}(R_{fix}, R_{flip})}\\
    &\cdot\frac{\Delta_{2,2,q}(-C^{-}, -R^{-}_{flip})\Delta_{2,2,q}(C^{+}, R^{+}_{flip})}{\Delta_{2,2,q}(-C^{-}, -L^{-}_{flip})\Delta_{2,2,q}(C^{+}, L^{+}_{flip})}\cdot\frac{\Delta_{1,2,q}(C, L_{flip})}{\Delta_{1,2,q}(C, R_{flip})}\\
    =&\frac{\displaystyle\prod_{s\in L^{+}_{flip}}\langle 2s\rangle_q}{\displaystyle\prod_{s\in R^{+}_{flip}}\langle 2s\rangle_q}\cdot\frac{\Delta_{1,2,q}(L_{fix}, R_{flip})}{\Delta_{1,2,q}(L_{fix}, L_{flip})}\cdot\frac{\Delta_{1,2,q}(R_{fix}, L_{flip})}{\Delta_{1,2,q}(R_{fix}, R_{flip})}\cdot\frac{\Delta_{1,2,q}(C, R_{flip})}{\Delta_{1,2,q}(C, L_{flip})}\cdot\frac{\Delta_{1,2,q}(C, L_{flip})}{\Delta_{1,2,q}(C, R_{flip})}\\
    =&\frac{\displaystyle\prod_{s\in L^{+}_{flip}}\langle 2s\rangle_q}{\displaystyle\prod_{s\in R^{+}_{flip}}\langle 2s\rangle_q}\cdot\frac{\Delta_{1,1,q}(L_{fix})\Delta_{1,2,1}(L_{fix}, R_{flip})\Delta_{1,1,q}(R_{flip})}{\Delta_{1,1,q}(L_{fix})\Delta_{1,2,q}(L_{fix}, L_{flip})\Delta_{1,1,q}(L_{flip})}\cdot\frac{\Delta_{1,1,q}(R_{fix})\Delta_{1,2,q}(R_{fix}, L_{flip})\Delta_{1,1,q}(L_{flip})}{\Delta_{1,1,q}(R_{fix})\Delta_{1,2,q}(R_{fix}, R_{flip})\Delta_{1,1,q}(R_{flip})}\\
    =&\frac{\displaystyle\prod_{s\in L^{+}_{flip}}\langle 2s\rangle_q}{\displaystyle\prod_{s\in R^{+}_{flip}}\langle 2s\rangle_q}\cdot\frac{\Delta_{1,1,q}(L_{fix}\cup R_{flip})}{\Delta_{1,1,q}(L_{fix}\cup L_{flip})}\cdot\frac{\Delta_{1,1,q}(R_{fix}\cup L_{flip})}{\Delta_{1,1,q}(R_{fix}\cup R_{flip})}\\
    =&\frac{\displaystyle\prod_{s\in L^{+}_{flip}}\langle 2s\rangle_q}{\displaystyle\prod_{s\in R^{+}_{flip}}\langle 2s\rangle_q}\cdot\frac{\Delta_{1,1,q}(L_{2})\Delta_{1,1,q}(R_{2})}{\Delta_{1,1,q}(L_{1})\Delta_{1,1,q}(R_{1})}.    
\end{aligned}
\end{equation}

Since 
\begin{equation}\label{eeg}
\begin{aligned}
    \frac{\displaystyle\prod_{z\in L_{2}}\frac{\langle 2|z|\rangle_{q}}{2}}{\displaystyle\prod_{z\in L_{1}}\cdot\frac{\langle 2|z|\rangle_{q}}{2}}\cdot\frac{\displaystyle\prod_{s\in L^{+}_{flip}}\langle 2s\rangle_q}{\displaystyle\prod_{s\in R^{+}_{flip}}\langle 2s\rangle_q}
    =\frac{\displaystyle\prod_{z\in L_{fix}}\frac{\langle 2|z|\rangle_{q}}{2}\prod_{z\in R_{flip}}\frac{\langle 2|z|\rangle_{q}}{2}}{\displaystyle\prod_{z\in L_{fix}}\frac{\langle 2|z|\rangle_{q}}{2}\prod_{z\in L_{flip}}\frac{\langle 2|z|\rangle_{q}}{2}}\cdot\frac{\displaystyle\prod_{s\in L^{+}_{flip}}\langle 2s\rangle_q}{\displaystyle\prod_{s\in R^{+}_{flip}}\langle 2s\rangle_q}
    =&\frac{\displaystyle\prod_{z\in R_{flip}}\frac{\langle 2|z|\rangle_{q}}{2}}{\displaystyle\prod_{z\in L_{flip}}\frac{\langle 2|z|\rangle_{q}}{2}}\cdot\frac{\displaystyle\prod_{s\in L^{+}_{flip}}\langle 2s\rangle_q}{\displaystyle\prod_{s\in R^{+}_{flip}}\langle 2s\rangle_q}\\
    =&\frac{\displaystyle\prod_{z\in R_{flip}}\frac{\langle 2|z|\rangle_{q}}{2}}{\displaystyle\prod_{z\in L_{flip}}\frac{\langle 2|z|\rangle_{q}}{2}}\cdot\frac{\displaystyle\prod_{s\in L^{+}_{flip}}\langle 2s\rangle_q}{\displaystyle\prod_{s\in R^{+}_{flip}}\langle 2s\rangle_q}\\
    =&\frac{\displaystyle\prod_{z\in R_{flip}^{+}}\frac{\langle 2|z|\rangle_{q}}{4}}{\displaystyle\prod_{z\in L_{flip}^{+}}\frac{\langle 2|z|\rangle_{q}}{4}}\\
    =&\sqrt{\frac{\displaystyle\prod_{z\in R_{flip}}\frac{\langle 2|z|\rangle_{q}}{4}}{\displaystyle\prod_{z\in L_{flip}}\frac{\langle 2|z|\rangle_{q}}{4}}}\\
    =&\sqrt{\frac{\displaystyle\prod_{z\in L_{2}}\frac{\langle 2|z|\rangle_{q}}{4}}{\displaystyle\prod_{z\in L_{1}}\frac{\langle 2|z|\rangle_{q}}{4}}},
\end{aligned}
\end{equation}
combining \eqref{eee}, \eqref{eef}, and \eqref{eeg} we get
\begin{equation}\label{eeh}
\begin{aligned}
    &\frac{\Bigg[\displaystyle\prod_{i\in C} \frac{q^{2i}+q^{-2i}}{2}\Bigg] \cdot \M_{q}(H_{n,m,a,b,c}(L_2\cup C, R_2\cup C, B))}{\Bigg[\displaystyle\prod_{i\in C} \frac{q^{2i}+q^{-2i}}{2}\Bigg] \cdot \M_{q}(H_{m,n,a,b,c}(L_1\cup C, R_1\cup C, B))}\\
    =&\frac{\Bigg[\displaystyle\prod_{i=1}^{m+a}\langle 2i-1\rangle_q!\Bigg]^2}{\Bigg[\displaystyle\prod_{i=1}^{n+a}\langle 2i-1\rangle_q!\Bigg]^{2}}\cdot\frac{\displaystyle\prod_{i=1}^{2n+b-1}{\langle i\rangle_{q}!}}{\displaystyle\prod_{i=1}^{2m+b-1}{\langle i\rangle_{q}!}}\cdot\frac{\displaystyle\prod_{z\in L_{2}}\frac{\langle 2|z|\rangle_{q}}{2}}{\displaystyle\prod_{z\in L_{1}}\cdot\frac{\langle 2|z|\rangle_{q}}{2}}\cdot\frac{\displaystyle\prod_{s\in L^{+}_{flip}}\langle 2s\rangle_q}{\displaystyle\prod_{s\in R^{+}_{flip}}\langle 2s\rangle_q}\cdot\frac{\Delta_{1,1,q}(L_{2})\Delta_{1,1,q}(R_{2})}{\Delta_{1,1,q}(L_{1})\Delta_{1,1,q}(R_{1})}\\
    =&\frac{\Bigg[\displaystyle\prod_{i=1}^{m+a}\langle 2i-1\rangle_q!\Bigg]^2}{\Bigg[\displaystyle\prod_{i=1}^{n+a}\langle 2i-1\rangle_q!\Bigg]^{2}}\cdot\frac{\displaystyle\prod_{i=1}^{2n+b-1}{\langle i\rangle_{q}!}}{\displaystyle\prod_{i=1}^{2m+b-1}{\langle i\rangle_{q}!}}\cdot\sqrt{\frac{\displaystyle\prod_{z\in L_{2}}\frac{\langle 2|z|\rangle_{q}}{4}}{\displaystyle\prod_{z\in L_{1}}\frac{\langle 2|z|\rangle_{q}}{4}}}\cdot\frac{\Delta_{1,1,q}(L_{2})\Delta_{1,1,q}(R_{2})}{\Delta_{1,1,q}(L_{1})\Delta_{1,1,q}(R_{1})}.
\end{aligned}
\end{equation}

Since the expression on the right does not depend on a choice of the set $C$, by \eqref{eea}, \eqref{eec}, and \eqref{eeh}, we have

\begin{equation}\label{eei}
\begin{aligned}
    \frac{\M_{q}(H_{n,m,a,b,c}(L_2, R_2, B))}{\M_{q}(H_{m,n,a,b,c}(L_1, R_1, B))}&=\frac{\Bigg[\displaystyle\prod_{i=1}^{m+a}\langle 2i-1\rangle_q!\Bigg]^2}{\Bigg[\displaystyle\prod_{i=1}^{n+a}\langle 2i-1\rangle_q!\Bigg]^{2}}\cdot\frac{\displaystyle\prod_{i=1}^{2n+b-1}{\langle i\rangle_{q}!}}{\displaystyle\prod_{i=1}^{2m+b-1}{\langle i\rangle_{q}!}}\cdot\sqrt{\frac{\displaystyle\prod_{z\in L_{2}}\frac{\langle 2|z|\rangle_{q}}{4}}{\displaystyle\prod_{z\in L_{1}}\frac{\langle 2|z|\rangle_{q}}{4}}}\cdot\frac{\Delta_{1,1,q}(L_{2})\Delta_{1,1,q}(R_{2})}{\Delta_{1,1,q}(L_{1})\Delta_{1,1,q}(R_{1})}
\end{aligned}
\end{equation}
and this completes the proof of Theorem \ref{tca}. 

The proof of Theorem \ref{tcb} is very similar to that of Theorem \ref{tca}.
Following the same partitioning as in the proof of Theorem \ref{tca}, we express the tiling generating function of the region $H'_{m,n,a,b,c}(L_1, R_1, B)$ as a sum as follows:

\begin{equation}\label{eej}
    \M_{q}(H'_{m,n,a,b,c}(L_1, R_1, B))=\displaystyle\sum_{C} \Bigg[\prod_{i\in C} \frac{q^{2i}+q^{-2i}}{2}\Bigg] \cdot \M_{q}(H'_{m,n,a,b,c}(L_1\cup C, R_1\cup C, B)).
\end{equation}
where $C$ represents the set of labels of all unit segments covered by the horizontal lozenges on the vertical diagonal and the sum in \eqref{eej} runs over all subsets $C$ of $\{-(m+n+a+c)-\frac{1}{2}, -(m+n+a+c)+\frac{1}{2},\dots, m+n+a+c-\frac{1}{2}, m+n+a+c+\frac{1}{2}\}$ such that $|C|=2m+2a+2-|L_1|$, $C\cap(L_1\cup R_1\cup B)=\varnothing$, and $\M_{q}(H'_{m,n,a,b,c}(L_1\cup C, R_1\cup C, B))\neq0$.

\begin{figure}
    \centering
\includegraphics[width=0.9\textwidth]{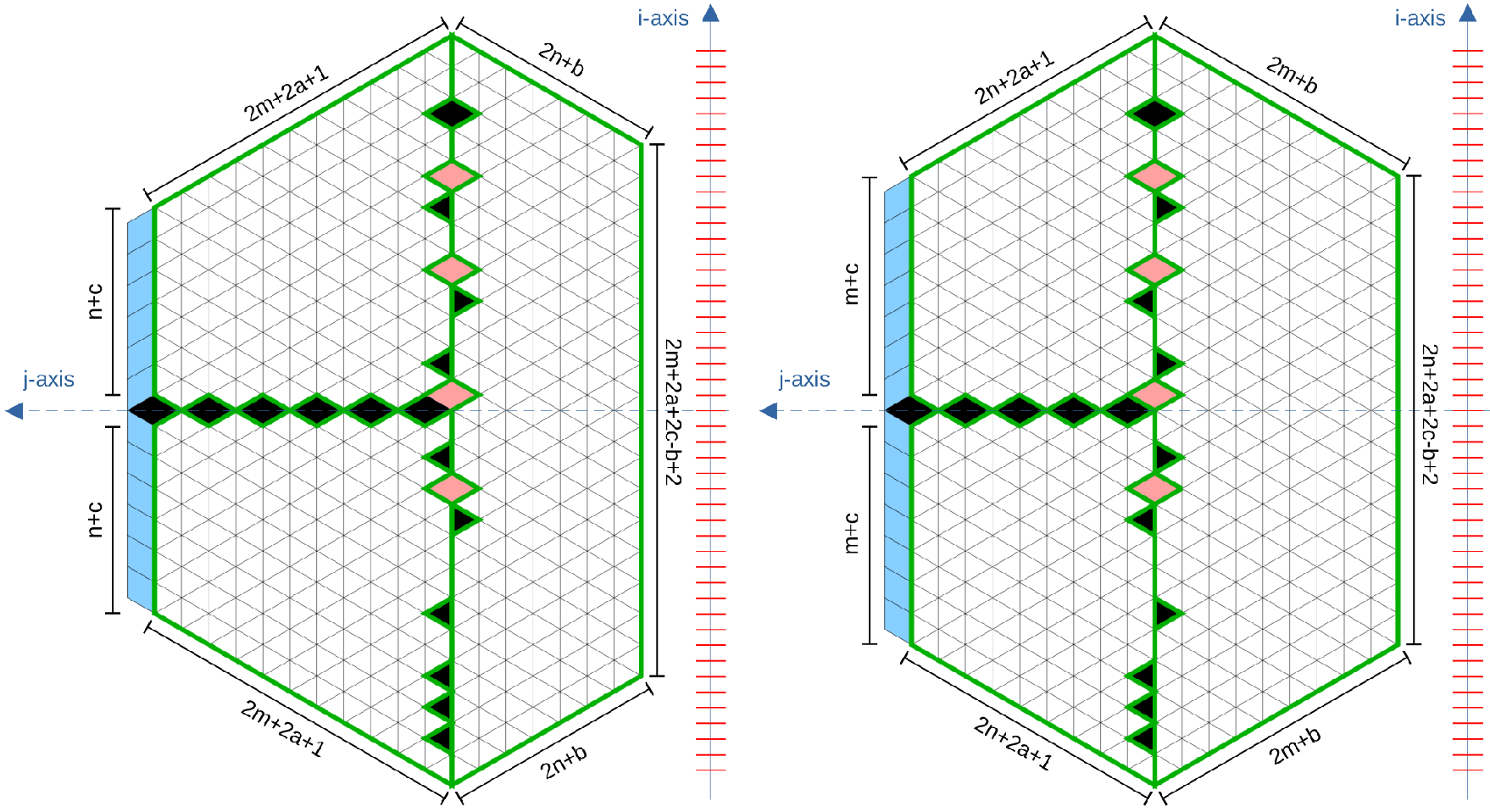}
    \caption{The pictures show how the $H'$-regions in Figure \ref{fcb} are split into three subregions. In these pictures, $C=\{-\frac{5}{2}, \frac{1}{2}, \frac{9}{2}, \frac{15}{2}\}$ (see \eqref{eek}) and the corresponding horizontal lozenges are marked by shaded lozenges across the vertical diagonals.}
    \label{feb}
\end{figure}

Again, since lozenge tilings of $H'_{m,n,a,b,c}(L_1\cup C, R_1\cup C, B)$ cannot have any horizontal lozenge across the vertical diagonal, its tiling generating function is the product of the tiling generating functions of three subregions separated by the vertical diagonal. Thus, we have
\begin{equation}\label{eek}
\begin{aligned}
    &\M_{q}(H'_{m,n,a,b,c}(L_1\cup C, R_1\cup C, B))\\
    =&\M_{q}(R_{n+c, 2m+2a+1}(-(L_{1}^{-}\cup C^{-})))\cdot\M_{q}(R_{n+c, 2m+2a+1}(L_{1}^{+}\cup C^{+}))\\
    &\cdot\M_{q}(S_{2m+2a+2c-b+2, 2n+b}(R_1\cup C)).    
\end{aligned}
\end{equation}

Applying the same argument to $H'_{n,m,a,b,c}(L_2, R_2, B)$, we have
\begin{equation}\label{eel}
    \M_{q}(H'_{n,m,a,b,c}(L_2, R_2, B))=\displaystyle\sum_{C} \Bigg[\prod_{i\in C} \frac{q^{2i}+q^{-2i}}{2}\Bigg] \cdot \M_{q}(H'_{n,m,a,b,c}(L_2\cup C, R_2\cup C, B)),
\end{equation}
and
\begin{equation}\label{eem}
\begin{aligned}
    &\M_{q}(H'_{n,m,a,b,c}(L_2\cup C, R_2\cup C, B))\\
    =&\M_{q}(R_{m+c,2n+2a+1}(-(L_{2}^{-}\cup C^{-})))\cdot\M_{q}(R_{m+c,2n+2a+1}(L_{2}^{+}\cup C^{+}))\\
    &\cdot\M_{q}(S_{2n+2a+2c-b+2, 2m+b}(R_2\cup C)).    
\end{aligned}
\end{equation}

One can easily check that \eqref{eej} and \eqref{eel} are summed over the same sets. Also, the assumption $\M_{q}(H'_{m,n,a,b,c}(L_1, R_1, B))\neq 0$ in Theorem \ref{tcb} guarantees the existence of $C$ that makes the summands in \eqref{eej} and \eqref{eel} nonzero. Now, we observe the ratio of corresponding summands in \eqref{eej} and \eqref{eel}. For any $C$ that makes both summands in \eqref{eej} and \eqref{eel} nonzero, by \eqref{eek}, \eqref{eem}, and Lemmas \ref{tda} and \ref{tdb},
\begin{equation}\label{een}
\begin{aligned}
    &\frac{\Bigg[\displaystyle\prod_{i\in C} \frac{q^{2i}+q^{-2i}}{2}\Bigg] \cdot \M_{q}(H'_{n,m,a,b,c}(L_2\cup C, R_2\cup C, B))}{\Bigg[\displaystyle\prod_{i\in C} \frac{q^{2i}+q^{-2i}}{2}\Bigg] \cdot \M_{q}(H'_{m,n,a,b,c}(L_1\cup C, R_1\cup C, B))}\\
    =&\frac{\M_{q}(R_{m+c,2n+2a+1}(-(L_{2}^{-}\cup C^{-})))\cdot\M_{q}(R_{m+c,2n+2a+1}(L_{2}^{+}\cup C^{+}))\cdot\M_{q}(S_{2n+2a+2c-b+2, 2m+b}(R_2\cup C))}{\M_{q}(R_{n+c, 2m+2a+1}(-(L_{1}^{-}\cup C^{-})))\cdot\M_{q}(R_{n+c, 2m+2a+1}(L_{1}^{+}\cup C^{+}))\cdot\M_{q}(S_{2m+2a+2c-b+2, 2n+b}(R_1\cup C))}\\
    =&\frac{\frac{\displaystyle\Delta_{2,1,q}(-(L_{2}^{-}\cup C^{-}))}{\displaystyle\prod_{i=1}^{n+a+1}\langle 2i-2\rangle_q!}\cdot\frac{\displaystyle\Delta_{2,1,q}(L_{2}^{+}\cup C^{+})}{\displaystyle\prod_{i=1}^{n+a+1}\langle 2i-2\rangle_q!}\cdot\frac{\displaystyle\Delta_{1,1,q}(R_2\cup C)}{\displaystyle\prod_{i=1}^{2m+b-1}\langle i\rangle_q!}}{\frac{\displaystyle\Delta_{2,1,q}(-(L_{1}^{-}\cup C^{-}))}{\displaystyle\prod_{i=1}^{m+a+1}\langle 2i-2\rangle_q!}\cdot\frac{\displaystyle\Delta_{2,1,q}(L_{1}^{+}\cup C^{+})}{\displaystyle\prod_{i=1}^{m+a+1}\langle 2i-2\rangle_q!}\cdot\frac{\displaystyle\Delta_{1,1,q}(R_1\cup C)}{\displaystyle\prod_{i=1}^{2n+b-1}{\langle i\rangle_{q}!}}}\\
    =&\frac{\Bigg[\displaystyle\prod_{i=1}^{m+a+1}\langle 2i-2\rangle_q!\Bigg]^2}{\Bigg[\displaystyle\prod_{i=1}^{n+a+1}\langle 2i-2\rangle_q!\Bigg]^2}\cdot\frac{\displaystyle\prod_{i=1}^{2n+b-1}{\langle i\rangle_{q}!}}{\displaystyle\prod_{i=1}^{2m+b-1}{\langle i\rangle_{q}!}}\cdot\frac{\Delta_{2,1,q}(-(L_{2}^{-}\cup C^{-}))\Delta_{2,1,q}(L_{2}^{+}\cup C^{+})\Delta_{1,1,q}(R_{2}\cup C)}{\Delta_{2,1,q}(-(L_{1}^{-}\cup C^{-}))\Delta_{2,1,q}(L_{1}^{+}\cup C^{+})\Delta_{1,1,q}(R_{1}\cup C)}.
\end{aligned}
\end{equation}

In \eqref{eef}, we showed that the latter term in \eqref{een} does not depend on the set $C$. Thus, combining \eqref{eej}, \eqref{eel}, \eqref{een}, and \eqref{eef}, we have
\begin{equation}\label{eeo}
\begin{aligned}
    &\frac{\M_{q}(H'_{n,m,a,b,c}(L_2\cup C, R_2\cup C, B))}{\M_{q}(H'_{m,n,a,b,c}(L_1\cup C, R_1\cup C, B))}\\
    =&\frac{\Bigg[\displaystyle\prod_{i=1}^{m+a+1}\langle 2i-2\rangle_q!\Bigg]^2}{\Bigg[\displaystyle\prod_{i=1}^{n+a+1}\langle 2i-2\rangle_q!\Bigg]^2}\cdot\frac{\displaystyle\prod_{i=1}^{2n+b-1}{\langle i\rangle_{q}!}}{\displaystyle\prod_{i=1}^{2m+b-1}{\langle i\rangle_{q}!}}\cdot\frac{\displaystyle\prod_{s\in L^{+}_{flip}}\langle 2s\rangle_q}{\displaystyle\prod_{s\in R^{+}_{flip}}\langle 2s\rangle_q}\cdot\frac{\Delta_{1,1,q}(L_{2})\Delta_{1,1,q}(R_{2})}{\Delta_{1,1,q}(L_{1})\Delta_{1,1,q}(R_{1})}\\
    =&\frac{\Bigg[\displaystyle\prod_{i=1}^{m+a+1}\langle 2i-2\rangle_q!\Bigg]^2}{\Bigg[\displaystyle\prod_{i=1}^{n+a+1}\langle 2i-2\rangle_q!\Bigg]^2}\cdot\frac{\displaystyle\prod_{i=1}^{2n+b-1}{\langle i\rangle_{q}!}}{\displaystyle\prod_{i=1}^{2m+b-1}{\langle i\rangle_{q}!}}\cdot\sqrt{\frac{\displaystyle\prod_{s\in L_{flip}}\langle 2|s|\rangle_q}{\displaystyle\prod_{s\in R_{flip}}\langle 2|s|\rangle_q}}\cdot\frac{\Delta_{1,1,q}(L_{2})\Delta_{1,1,q}(R_{2})}{\Delta_{1,1,q}(L_{1})\Delta_{1,1,q}(R_{1})}\\
    =&\frac{\Bigg[\displaystyle\prod_{i=1}^{m+a+1}\langle 2i-2\rangle_q!\Bigg]^2}{\Bigg[\displaystyle\prod_{i=1}^{n+a+1}\langle 2i-2\rangle_q!\Bigg]^2}\cdot\frac{\displaystyle\prod_{i=1}^{2n+b-1}{\langle i\rangle_{q}!}}{\displaystyle\prod_{i=1}^{2m+b-1}{\langle i\rangle_{q}!}}\cdot\sqrt{\frac{\displaystyle\prod_{s\in L_{1}}\langle 2|s|\rangle_q}{\displaystyle\prod_{s\in L_{2}}\langle 2|s|\rangle_q}}\cdot\frac{\Delta_{1,1,q}(L_{2})\Delta_{1,1,q}(R_{2})}{\Delta_{1,1,q}(L_{1})\Delta_{1,1,q}(R_{1})}.
\end{aligned}
\end{equation}
This completes the proof of Theorem \ref{tcb}. 
\end{proof}




\bibliography{bibliography}{}
\bibliographystyle{abbrv}

\newpage
\appendix
\section{Proof of Lemma \ref{tda}}\label{sec:App}
The goal of this section is to give a proof of Lemma \ref{tda}. This is achieved by proving a more general theorem (Theorem \ref{tfa}) and specializing it.

\begin{figure}
    \centering
    \includegraphics[width=0.8\textwidth]{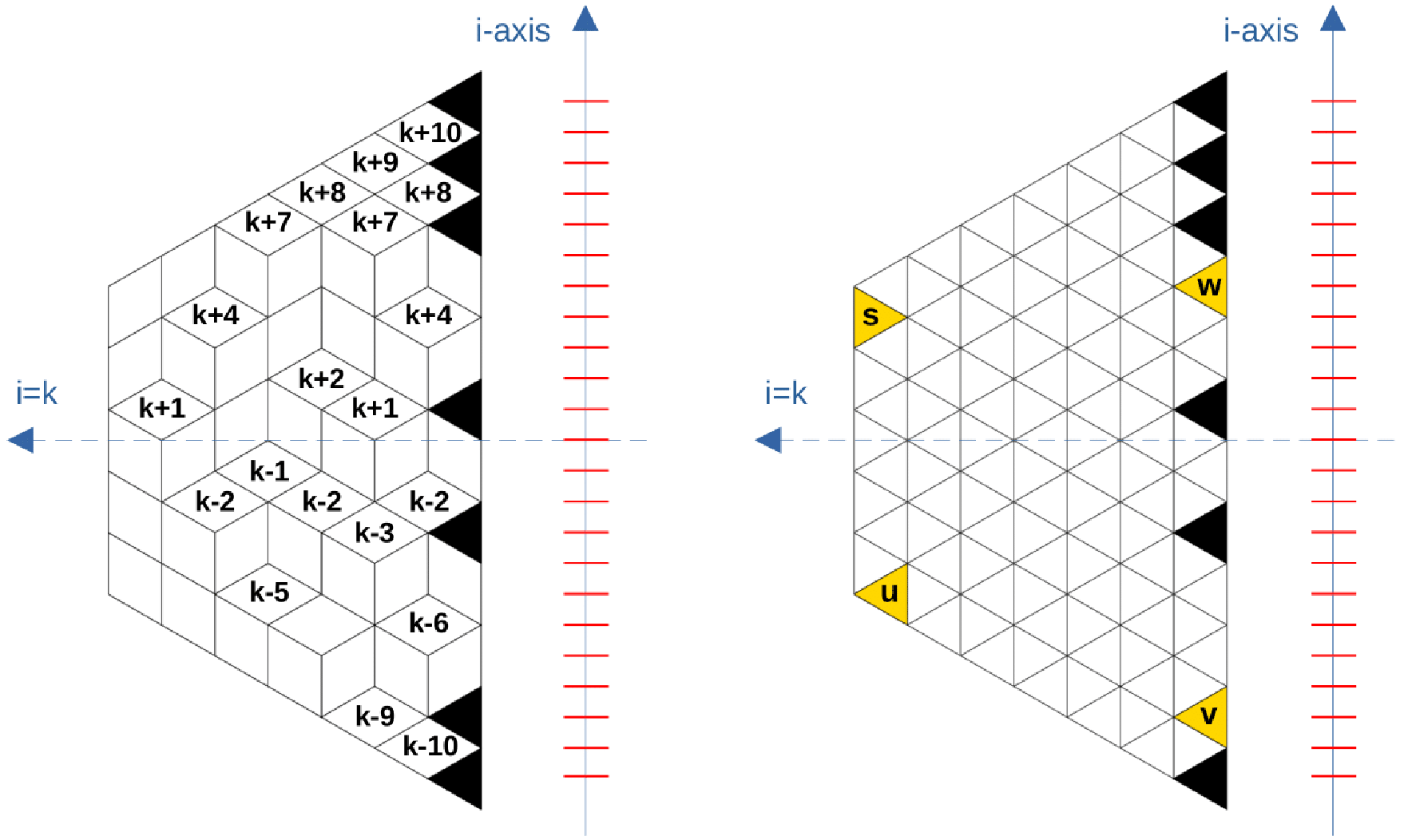}
    \caption{The picture on the left is $S_{5,7}(\overline{\{1,2,5,7,10,11,12\}})$ on $(i,j)$-coordinate plane. The picture on the right is obtained from the left picture by filling in the second dent from the bottom and denoted by $R$ in the proof of Theorem \ref{tfa}.}
    \label{ffa}
\end{figure}

We consider the trapezoidal region $S_{x,y}$ with sides of length $x$, $y$, $x+y$, and $y$ clockwise from the left (this region was introduced in Section \ref{sec:4}). We label the unit segments on the right side of $S_{x,y}$ by $-\frac{x+y-1}{2},-\frac{x+y-3}{2},\dots,\frac{x+y-3}{2},\frac{x+y-1}{2}$ from the bottom to the top. Then for any subset $W\subseteq[x+y]$, let $S_{x,y}(\overline{W})$ be the region obtained from $S_{x,y}$ by deleting left-pointing unit triangles whose right sides are labeled by elements in $W-\frac{x+y+1}{2}=\{w-\frac{x+y+1}{2}|w\in W\}$. Note that this notation is not consistent with the one introduced in Section \ref{sec:4}: the region $S_{x,y}(\overline{W})$ is the same region as $S_{x,y}(W-\frac{x+y+1}{2})$ of Section \ref{sec:4}. We are using a different notation here because the proof of Theorem \ref{tfa} is cleaner with this new notation.

We then place this region $S_{x,y}(\overline{W})$ on the $(i,j)$-coordinate plane so that 1) the line $i=k$ is the perpendicular bisector of the left side of the region and 2) the half of the side length of unit triangles is the unit length of the $i$-axis. (See the left picture in Figure \ref{ffa}; ignore the labels on horizontal lozenges at this point.) We then assign weights to all lozenges bounded by $S_{x,y}(\overline{W})$ as follows. We give a weight $1$ to every lozenge whose long diagonal is not horizontal. For horizontal lozenges, if the center of the lozenge has $i$-coordinate $n$, then we assign a weight $\frac{Xq^{n}+Yq^{-n}}{2}$ (see the left picture in Figure \ref{ffa}). Let $\M_{(q,X,Y,k)}(S_{x,y}(\overline{W}))$ be the tiling generating function of $S_{x,y}(\overline{W})$ under this weight assignment. The following theorem gives a simple product formula for $\M_{(q,X,Y,k)}(S_{x,y}(\overline{W}))$.

\begin{thm}\label{tfa}
For any nonnegative integers $x$, $y$ and for any set $W\subseteq [x+y]$, $S_{x,y}(\overline{W})$ has no lozenge tiling unless $|W|=y$. If $|W|=y$, we have
\begin{equation}\label{efa}
    \M_{(q,X,Y,k)}(S_{x,y}(\overline{W}))=\frac{\displaystyle\Delta_{1,1,(q,X,Y,k)}(W)}{\displaystyle\prod_{i=1}^{y-1}{\langle i\rangle_{q}!}}
\end{equation}
where $\displaystyle\Delta_{1,1,(q,X,Y,k)}(W)\coloneqq \prod_{w_1, w_2 \in W, w_1 < w_2}\Bigg[\frac{q^{(w_1+w_2+k)-(x+y+1)}X+q^{-(w_1+w_2+k)-(x+y+1)}Y}{2}\Bigg]\langle w_2-w_1\rangle_{q}$.
\end{thm}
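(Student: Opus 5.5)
We prove Theorem \ref{tfa} by induction on $x+y$ (equivalently on the number of rows), using Kuo's graphical condensation \cite{kuo2004applications}. The plan is to remove the leftmost forced structure and then apply a condensation identity on the dented right side. The extra parameters $X,Y,k$ are what make the induction close: the subregions produced by condensation are again of the form $S_{x',y'}(\overline{W'})$, but shifted horizontally by one unit. A horizontal shift replaces $k$ by $k\pm1$, so the family has to be closed under changes of $k$, and the weight $\frac{Xq^{n}+Yq^{-n}}{2}$ is built to absorb such shifts.

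\textbf{Preliminaries.} The claim that there are no tilings unless $|W|=y$ follows from the usual balance count. The region $S_{x,y}$ has exactly $y$ more left-pointing than right-pointing unit triangles, so exactly $y$ left-pointing dents must be removed. For the base cases:
\begin{itemize}
\item When $y=0$, the region is a single column forced by vertical lozenges, with weight $1$.
\item When $x=0$, the region is a triangle with all dents removed, tiled uniquely.
\item When $y=1$, the tiling is determined by the position of the single dent and consists of one column of horizontal lozenges. The right side of \eqref{efa} is the empty product, which equals $1$; this should be checked directly, or the base case set up so that it holds.
\end{itemize}

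\textbf{Recurrence.} Write $W=\{w_1<\dots<w_y\}$. I would apply Kuo condensation to the planar dual of $S_{x,y}(\overline{W})$ after filling in two dents, say $w_1$ and $w_y$. This matches the paper's hint in Figure \ref{ffa}, where a dent is filled to obtain a region $R$. The four unit triangles used are the filled dent positions on the right side, together with suitable triangles at the top-left and bottom-left corners. Removing them yields regions of the form $S_{x,y-1}(\overline{W\setminus\{w_i\}})$ or $S_{x+1,y-1}(\cdots)$, each up to forced strips that are possibly shifted. This produces a recurrence of the form
\[
\M(S_{x,y}(\overline{W}))\,\M(\cdot)=\M(\cdot)\M(\cdot)+\M(\cdot)\M(\cdot),
\]
in which the forced horizontal lozenges contribute explicit weight factors $\frac{Xq^{n}+Yq^{-n}}{2}$ and the shifts change $k$.

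An alternative is available. The generalized weight $\frac{Xq^{n}+Yq^{-n}}{2}$ is linear in $(X,Y)$ per lozenge, so one can regard the tiling as a family of nonintersecting paths and use Lindstr\"om--Gessel--Viennot. Each path runs through the horizontal lozenges in one row, and the resulting determinant has entries that are $q$-binomial-type sums. This determinant could then be evaluated as a Vandermonde-type determinant in the variables $Xq^{w}$ and $Yq^{-w}$. I will keep this in reserve in case the condensation bookkeeping becomes unmanageable.

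\textbf{Verification.} Substitute the conjectured product \eqref{efa} into the recurrence. After cancelling common factors, the recurrence reduces to a three-term identity involving only the pairs containing $w_1$ or $w_y$. That identity should reduce to the elementary relation
\[
\langle A\rangle_q\,\frac{Xq^{B}+Yq^{-B}}{2}-\langle B\rangle_q\,\frac{Xq^{A}+Yq^{-A}}{2}=\langle A-B\rangle_q\,\frac{Xq^{0}+Yq^{0}}{2}\cdot(\text{shift}),
\]
or to a similar relation, which can be checked by expanding in $q$.

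\textbf{Specialization and main obstacle.} Lemma \ref{tda} then follows by setting $X=Y=1$ with $k$ chosen to centre the region; this centring turns $(w_1+w_2+k)-(x+y+1)$ into $z_1+z_2$. I expect the main obstacle to be the bookkeeping in the recurrence step. One has to identify exactly which forced strips appear, find their $i$-coordinates and hence how $k$ shifts, and confirm that the weights of the forced lozenges combine with the product formula into a clean $q$-identity. I would first work through small cases, such as $y=2$, to pin down the correct choice of the four condensation triangles.
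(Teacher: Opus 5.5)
Your overall strategy matches the paper's: Kuo condensation plus induction, with $X,Y,k$ there to absorb shifts of the symmetry axis. However, the step that carries the proof, the condensation configuration, fails as you set it up, and your induction measure does not close.

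\textbf{The two-dent configuration degenerates.} If you fill two dents, the dual graph has two more left-pointing than right-pointing triangles. The only Kuo identity for that imbalance is Theorem 5.4 of \cite{kuo2004applications}, which needs all four chosen vertices left-pointing. The left-pointing triangles at the left corners are the bottom and top ones of the leftmost column. Removing both leaves that column with $x$ right-pointing triangles, which can only be matched inside the column, against $x-1$ left-pointing ones, so that term is $0$. Since $w_1,w_y$ are adjacent in the cyclic order, this zero is exactly the factor multiplying $\M(S_{x,y}(\overline{W}))$. The identity collapses to the tautology $\M(S_{x+1,y-1}(\overline{W\setminus\{w_1\}}))\,\M(S_{x+1,y-1}(\overline{W\setminus\{w_y\}}))=\M(S_{x+1,y-1}(\overline{W\setminus\{w_y\}}))\,\M(S_{x+1,y-1}(\overline{W\setminus\{w_1\}}))$. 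A mixed choice of one left- and one right-pointing corner triangle fits no Kuo identity at all.

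\textbf{The induction on $x+y$ cannot close.} The region $S_{x+1,y-1}$, which you expect to appear, has the same $x+y$. A working recurrence also contains a region with the same $(x,y)$ and a modified dent set.

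\textbf{What the paper does instead.} The missing idea is to fill only one dent, so that $|V_1|=|V_2|+1$, and use Theorem \ref{tfb} with three left-pointing vertices and one right-pointing one:
\begin{itemize}
\item $u$, the lowest left-pointing triangle of the leftmost column;
\item $v$, the filled dent;
\item $w$, the \emph{present} (non-dented) triangle on the right side directly below the top-right dent cluster;
\item $s$, the highest right-pointing triangle of the leftmost column.
\end{itemize}
These removals act as follows.
\begin{itemize}
\item Removing $u$ forces the leftmost column by non-horizontal lozenges, giving $S_{x+1,\cdot}$ with the same $k$.
\item Removing $s$ forces a row of horizontal lozenges along the top edge, which lowers the axis and gives $k-1$. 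This requires $x+y\in W$, which is why the paper first peels off forced strips to reach $w_1=1$, $w_y=x+y$; that peeling shifts $k$.
\item Removing $w$ adds a dent.
\end{itemize}
All six regions in \eqref{efc} lie in the family, and each has smaller $x+y+t$, where $t=y-l$ and $l$ is the size of the top-right dent cluster. This is the correct induction parameter, with the extra base case $t=0$, where the tiling is forced.

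\textbf{The verification then telescopes.} The shift $(x+y,k)\mapsto(x+y-1,k-1)$ preserves $k-(x+y+1)$. Hence all six product formulas use the same pair factor $g(a,b)=\frac{Xq^{a+b+c}+Yq^{-(a+b+c)}}{2}\langle b-a\rangle_q$ with $c=k-(x+y+1)$. The forced top rows appear once in each product and cancel. Identifying $v,w$ with their labels, the check reduces to $g(v,x+y)=g(v,w)+g(w,x+y)$. This telescopes because $2(q-q^{-1})g(a,b)=\psi(b)-\psi(a)$ with $\psi(t)=Xq^{c+2t}-Yq^{-c-2t}$.
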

The second author proved the special case of Theorem \ref{tfa} in \cite{lai2022ratio} using a version of Kuo's graphical condensation \cite{kuo2004applications}. Since we follow the same proof, we recall the version of Kuo's graphical condensation. Let $G$ be a bipartite graph and $V_{1}$ and $V_{2}$ be the two partitions of the set of vertices of $G$. We also denote the set of edges of $G$ by $E$ and denote $G=(V_{1}, V_{2}, E)$. We assume that there is a weight function defined on $E$ (i.e., every edge is weighted). For any graph $G$, a \textit{perfect matching} of $G$ is a collection of edges of $G$ that covers each vertex of $G$ exactly once. Given a perfect matching, its \textit{weight} is the product of the weights of all edges that constitute the perfect matching. Then, the \textit{matching generating function} of $G$ is the sum of weights of all perfect matchings of $G$, and we denote it by $\M(G)$. For any set of vertices $W$ of $G$, $G-W$ is the subgraph of $G$ that is obtained from $G$ by deleting all vertices in $W$ and edges adjacent to at least one vertex in $W$.

\begin{thm}[\cite{kuo2004applications}, Theorem 5.3]
Let $G=(V_1, V_2, E)$ be a weighted plane bipartite graph in which $|V_1|=|V_2|+1$. Let vertices $u,v,w,$ and $s$ appear in a cyclic order on a face of $G$. If $u,v,w \in V_1$ and $s \in V_2$, then
\begin{equation}\label{efb}
    \M(G-\{v\})\M(G-\{u,w,s\})=\M(G-\{u\})\M(G-\{v,w,s\})+\M(G-\{w\})\M(G-\{u,v,s\}).
\end{equation}
\label{tfb}
\end{thm}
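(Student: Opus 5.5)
The plan is to prove the identity \eqref{efb} combinatorially, by superimposing pairs of perfect matchings and exchanging edges along one alternating path. This is the standard argument behind Kuo's condensation.

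\textbf{Step 1: set up the superposition.} Write each side as a sum over ordered pairs of matchings. The left side is the total weight of pairs $(M_1,M_2)$ with $M_1$ a perfect matching of $G-\{v\}$ and $M_2$ a perfect matching of $G-\{u,w,s\}$; the right side is analogous. For such a pair, form the multigraph $M_1\sqcup M_2$. Every vertex other than $u,v,w,s$ has degree $2$. Each of $u,v,w,s$ has degree $1$: $u,w,s$ are covered only by $M_1$, and $v$ only by $M_2$. Hence the superposition decomposes into doubled edges, even alternating cycles, and exactly two alternating paths whose endpoints pair up $\{u,v,w,s\}$. The weight of the pair is the product of the weights of the edges of this multiset, so it is unchanged if we redistribute the edges of any component between the two matchings.

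\textbf{Step 2: restrict the possible pairings.} An alternating path starting at an $M_1$-only vertex begins with an $M_1$ edge. It ends with an $M_1$ edge if and only if it has odd length, i.e.\ if and only if its endpoints lie in different color classes. Since $u,v,w\in V_1$ and $s\in V_2$, the only possibilities are a path $u$--$v$ together with a path $w$--$s$, or a path $u$--$s$ together with a path $v$--$w$. On the right-hand sides this parity check alone does not suffice. There, planarity enters: $u,v,w,s$ lie in this cyclic order on one face, so two vertex-disjoint paths cannot join $u$ to $w$ and $v$ to $s$. This forbids the crossing pairing that parity would otherwise allow.

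\textbf{Step 3: define the bijection.} In the first case, swap the $M_1$ and $M_2$ edges along the $w$--$s$ path. The new pair consists of a perfect matching of $G-\{v,w,s\}$ and one of $G-\{u\}$. In the second case, swap along the $v$--$w$ path. The new pair consists of a perfect matching of $G-\{w\}$ and one of $G-\{u,v,s\}$. Both maps are weight preserving by Step 1.

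To show that this gives a bijection onto the disjoint union of the two right-hand pair sets, I will run the same analysis there. Superimpose a pair counted by $\M(G-\{u\})\M(G-\{v,w,s\})$, and similarly a pair counted by the other product. Check via parity and planarity that the path structure is forced to be exactly the image type. Swapping back along the same path then recovers a left-hand pair, and the two constructions are mutually inverse.

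\textbf{Main obstacle.} I expect the careful case check on the right-hand side to be the delicate step. There one must verify that each right-hand pair has a path structure landing in exactly one case. For pairs from $G-\{u\}$ and $G-\{v,w,s\}$, the endpoint pattern must force the $w$--$s$ path rather than the $v$--$w$ path, and vice versa. This is precisely where the face/cyclic-order hypothesis is used, so I will do this verification explicitly for both right-hand products.
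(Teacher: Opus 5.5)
The paper gives no proof of its own here (it simply cites Kuo's Theorem 5.3), and your superposition-and-path-swap argument is essentially the standard proof from Kuo's paper; it is correct. On the left side parity alone forces the paths to pair $u$ with $v$ and $w$ with $s$, or $u$ with $s$ and $v$ with $w$, while for each right-hand product parity also permits the crossing pairing ($u$ with $w$, $v$ with $s$), which the cyclic order on a face rules out, so swapping along the $w$--$s$ (resp.\ $v$--$w$) path gives mutually inverse weight-preserving bijections.
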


For any region $R$ on a triangular lattice, its \textit{dual graph} is a graph such that 1) its vertices are the unit triangles in $R$ and 2) two vertices are joined by an edge if the corresponding unit triangles share an edge in $R$. If every unit lozenge in $R$ is weighted, the dual graph has an induced weight on its edges: each edge is weighted by the weight of the corresponding unit lozenge in $R$. By its construction, unit lozenges in $R$ are in one-to-one correspondence with the edges of the dual graph, and this correspondence induces a natural bijection between lozenge tilings of $R$ and the perfect matchings of its dual graph. Furthermore, this bijection is weight-preserving, and it allows us to prove Theorem \ref{tfa} using Theorem \ref{tfb}.

\begin{proof}[Proof of Theorem \ref{tfa}]
    Note that the trapezoid region $S_{x,y}$ consists of $y$ more left-pointing unit triangles than right-pointing unit triangles. Since every lozenge consists of one left-pointing and one right-pointing unit triangles, unless $|W|=y$, the region $S_{x,y}(\overline{W})$ has no tilings. In the rest of the proof, we assume $|W|=y$ and write $W=\{w_1,\ldots,w_y\}$ where elements are written in increasing order.

    We first show that we only have to prove this theorem when $w_1=1$ and $w_y=x+y$. Suppose $(w_1,w_y)\neq(1,x+y)$ and assume that $(w_1,w_y)=(1+a,(x+y)-b)$ for some integers $a$ and $b$ such that $1\leq 1+a<(x+y)-b\leq x+y$. As shown in left picture in Figure \ref{ffb}, one can check that the tiling generating function $\M_{(q,X,Y,k)}(S_{x,y}(\overline{W}))$ is the same as the tiling generating function of its subregion $\M_{(q,X,Y,k-b+a)}(S_{x-a-b,y}(\overline{W-a}))$. Furthermore, one can check that Equation \eqref{efa} yields the same product formula for $\M_{(q,X,Y,k-b+a)}(S_{x-a-b,y}(\overline{W-a}))$ and $\M_{(q,X,Y,k)}(S_{x,y}(\overline{W}))$. Therefore, it suffices to prove the theorem when $(w_1,w_y)=(1,x+y)$.

\begin{figure}
    \centering
    \includegraphics[width=0.9\textwidth]{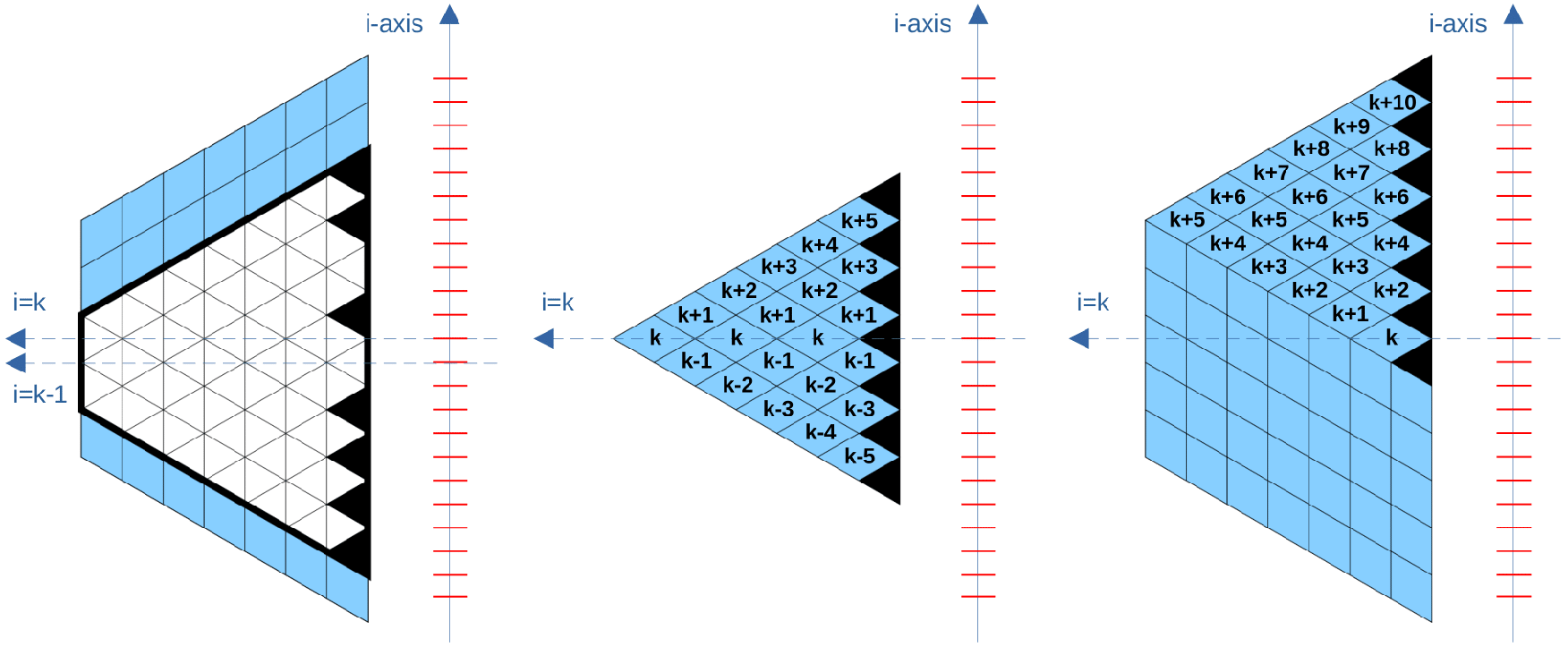}
    \caption{The left picture illustrates why $\M_{q,X,Y,k}(S_{5,7}(2,3,4,5,7,9,10))$ (tiling generating function of the entire region) and $\M_{q,X,Y,k-1}(S_{2,7}(1,2,3,4,6,8,9))$ (tiling generating function of the subregion bounded by thick lines) are the same. The middle picture shows the unique tiling of $S_{0,7}((1,2,3,4,5,6,7))$. The right picture shows the unique tiling of $S_{5,7}(6,7,8,9,10,11,12)$.}
    \label{ffb}
\end{figure}

    Given a region $S_{x,y}(\overline{W})$, let $l$ be the size of the maximal dent cluster attached to the top-right corner of the trapezoid region $S_{x,y}(\overline{W})$. More precisely, $l$ is the unique integer satisfying $w_{y-i}=x+y-i$ for $0\leq i<l$ and  $w_{y-l}<x+y-l$ (thus, $x+y-l\notin W$). For example, $l=3$ for the region in the left picture in \ref{ffa}. We then set $t=y-l$. We will prove the theorem using an induction on $x+y+t$.

    The base cases will be the cases when one of the three parameters $x$, $y$, and $t$ is equal to $0$. We first show that the theorem holds in these cases.
    \begin{itemize}
        \item If $x=0$, then $W=[y]=\{1,\ldots,y\}$ and in this case, the region has a unique tiling that consists of horizontal lozenges only (see the middle picture in Figure \ref{ffb}). In this case, one can check that the tiling generating function of this region, which is the weight of the unique tiling, matches what Equation \eqref{efa} yields.
        \item If $y=0$, then $W=\varnothing$ and the region becomes degenerate, so it has only one tiling (empty tiling) whose weight is $1$. In this case, the formula in Equation \eqref{efa} also yields $1$ (because the empty product is $1$), so the theorem holds.
        \item If $t=0$, then the region also has only one tiling (see the right picture in Figure \ref{ffb}) and one can check that the weight of the tiling matches what Equation \eqref{efa} yields.
    \end{itemize}

    Therefore, we can assume that all three parameters $x$, $y$, and $t$ are positive. As mentioned earlier, we will construct a recurrence relation using Kuo's graphical condensation. To do that, we consider the region $S_{x,y}(\overline{W})$ and let $k$ be the smallest integer such that $w_k=k$ but $w_{k+1}>k+1$ (such $k$ exists because $w_1=1$ and $w_y=x+y>y$). Let $R$ be the region obtained from $S_{x,y}(\overline{W})$ by filling in the $k$th dent from the bottom whose right side is labeled by $k-\frac{x+y+1}{2}$. Then, we choose four unit triangles on $R$ as described in the right picture in Figure \ref{ffa}. More precisely,
    \begin{itemize}
        \item $u$ is the lowest left-pointing unit triangle from the leftmost strip,
        \item $v$ is the position of the dent that we filled in from $S_{x,y}(\overline{W})$ whose right side is labeled by $k-\frac{x+y+1}{2}$,
        \item $w$ is the unit triangle whose right side is labeled by $x+y-l-\frac{x+y+1}{2}$ (this is the dent position right below the maximal dent cluster at the top-right corner),
        \item $s$ is the highest right-pointing unit triangle from the leftmost strip.
    \end{itemize}
    
    By the assumption that $x,y,t>0$, these four unit triangles are distinct and well-defined, thus can be chosen. We then apply Kuo's graphical condensation to the dual graph\footnote{See the paragraph right after the statement of Theorem \ref{tfb}.} of $R$ with four vertices corresponding to the unit triangles $u$, $v$, $w$, and $s$. If we keep track of the weights of forced lozenges, we obtain the following recurrence relation (see Figure \ref{ffc} that illustrates the six terms in \eqref{efc}).
    \begin{figure}
        \centering
        \includegraphics[width=1\textwidth]{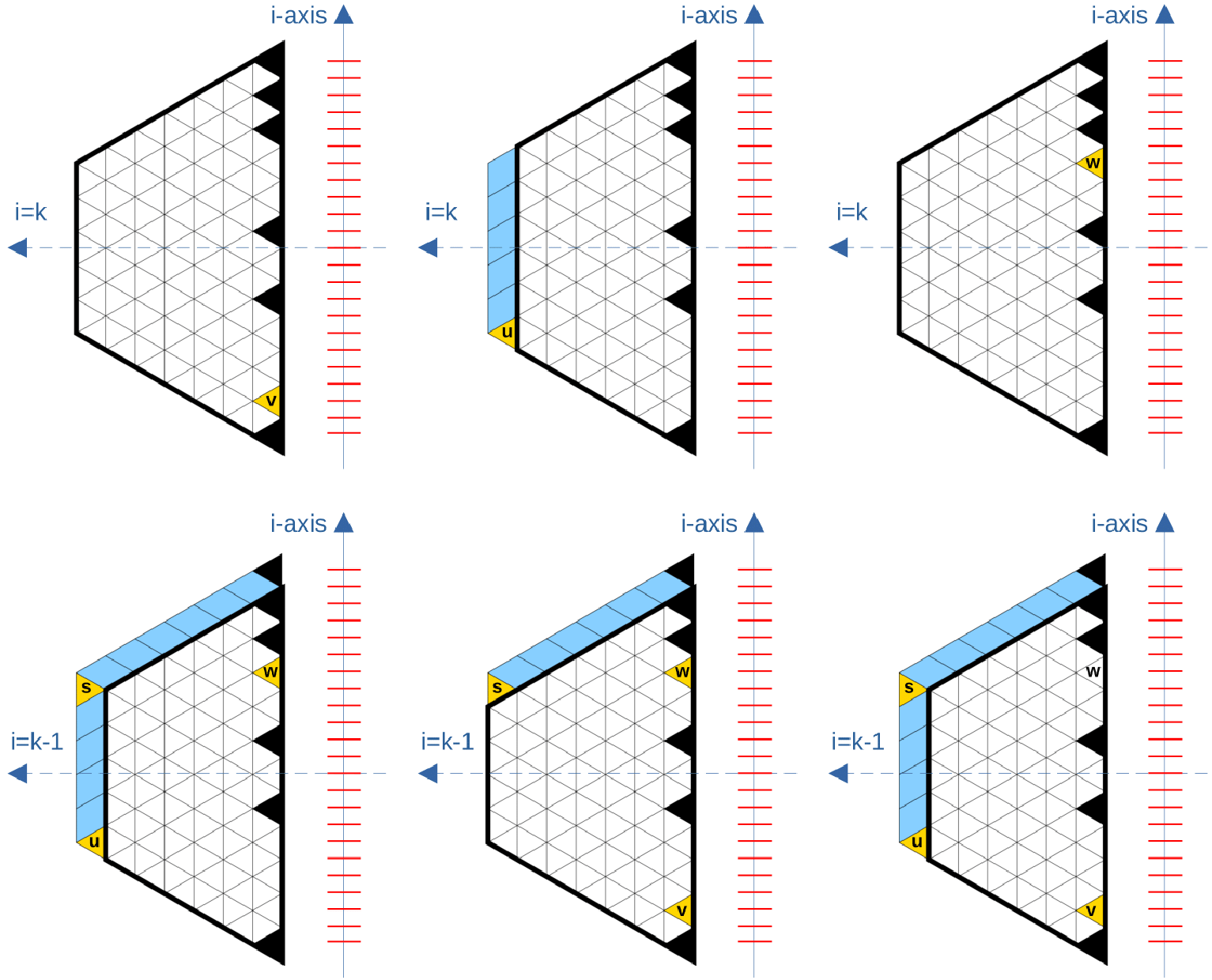}
        \caption{Illustration of the six regions appearing in \eqref{efc}.}
        \label{ffc}
    \end{figure}    
    \begin{equation}\label{efc}
    \begin{aligned}
        &\M_{(q,X,Y,k)}(S_{x,y}(\overline{W}))\cdot\M_{(q,X,Y,k-1)}(S_{x,y-1}(\overline{W\cup\{x+y-l\}\setminus\{k,x+y\}}))\\
        =&\M_{(q,X,Y,k)}(S_{x+1,y-1}(\overline{W\setminus\{k\}}))\cdot\M_{(q,X,Y,k-1)}(S_{x-1,y}(\overline{W\cup\{x+y-l\}\setminus\{x+y\}}))\\
        &+\M_{(q,X,Y,k)}(S_{x,y}(\overline{W\cup\{x+y-l\}\setminus\{k\}}))\cdot\M_{(q,X,Y,k-1)}(S_{x,y-1}(\overline{W\setminus\{x+y\}})).
    \end{aligned}
    \end{equation}
    (We canceled out the weights of forced lozenges.) One can check that the formula in Equation \eqref{efa} satisfies the above recurrence relation. Furthermore, the parameter $x+y+t$ of the six regions appearing in \eqref{efc} is $x+y+(y-l)=x+2y-l$, $x+(y-1)+(y-1-l)=x-2y-l-2$, $(x+1)+(y-1)+(y-1-l)=x+2y-l-1$, $(x-1)+y+(y-l)=x+2y-l-1$, $x+y+(y-(l+1))=x+2y-l-1$, and $x+(y-1)+(y-1-(l-1))=x+2y-l-1$, so the proof of the theorem follows from the induction.
\end{proof}

As a direct consequence of Theorem \ref{tfa}, we can give a simple proof of Theorem \ref{tda}.
\begin{proof}[Proof of Theorem \ref{tda}]
    Not that Theorem \ref{tda} is a special case of Theorem \ref{tfa} when $X=Y=1$, $k=0$ and $W=Z+\frac{x+y+1}{2}=\{z+\frac{x+y+1}{2}~|~z\in Z\}$. By plugging in these values in Equation \eqref{efa}, we get the formula in \eqref{eda}.
\end{proof}

As one more application of Theorem \ref{tfa}, we deduce the tiling generating function of a hexagonal region. For any nonnegative integers $a$, $b$, and $c$, consider a hexagon with sides of length $a$, $b$, $c$, $a$, $b$, and $c$ clockwise from the left side and denote it by $H_{a,b,c}$. We put $H_{a,b,c}$ on the $(i,j)$-coordinate system\footnote{We assume that the unit length of the $i$-axis is the half of the side length of unit triangles.} so that the line $i=k$ is the perpendicular bisector of the left side of $H_{a,b,c}$ for some fixed integer $k$ (see the left picture in Figure \ref{ffd}). We then assign weight to all lozenges bounded by $H_{a,b,c}$. We give weight $1$ to every lozenge whose long diagonal is not horizontal. For horizontal lozenges, if the center of the lozenge has $i$-coordinate $n$, then we assign a weight $\frac{Xq^{n}+Yq^{-n}}{2}$. Let us denote the tiling generating function of $H_{a,b,c}$ under this weight assignment by $\M_{(q,X,Y,k)}(H_{a,b,c})$. Note that the special case of the following corollary when $X=Y=1$ and $k=0$ was presented and used in the authors' previous work \cite[Lemma 2.1]{byunlai2025lozenge}.

\begin{cor}\label{tfc}
    For any nonnegative integers $a$, $b$, and $c$, the tiling generating function of $H_{a,b,c}$ under the weight assignment described above is given by the following formula.
    \begin{equation}\label{efd}
        \M_{(q,X,Y,k)}(H_{a,b,c})=\Bigg[\prod_{i=1}^{b}\prod_{j=1}^{c}\frac{Xq^{k+i-j}+Yq^{-(k+i-j)}}{2}\Bigg]\cdot\frac{H_{q}(a)H_{q}(b)H_{q}(c)H_{q}(a+b+c)}{H_{q}(a+b)H_{q}(b+c)H_{q}(c+a)}.
    \end{equation}
    where $H_{q}(n)\coloneqq\prod_{i=0}^{n-1}\langle i\rangle_{q}!$ for positive integers $n$ and $H_{q}(0)\coloneqq1$.
\end{cor}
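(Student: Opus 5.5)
The plan is to cut the hexagon $H_{a,b,c}$ along a vertical line and reduce everything to Theorem~\ref{tfa}, which is already stated for arbitrary $(q,X,Y,k)$. Draw the vertical lattice line through the top and bottom vertices of the left side $a$. More usefully, draw the vertical diagonal that separates the left trapezoid (the sides $a$, $b$, and part of the bottom) from the rest. A cleaner choice is to view $H_{a,b,c}$ itself as a dented trapezoid, by regarding the right boundary near the side $c$ as a forced pattern.

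Concretely, I would embed $H_{a,b,c}$ into the trapezoid $S_{x,y}$ with $x=a$ and $y=b+c$ (left side $a$, right side $a+b+c$). The complement of $H_{a,b,c}$ inside this trapezoid consists of two triangular corners, at the top-right and bottom-right. These corners can be produced by taking a suitable dent set $W\subseteq[a+b+c]$, $|W|=y$, consisting of consecutive dent clusters at the top and bottom of the right side. Then $S_{x,y}(\overline{W})$ consists of $H_{a,b,c}$ together with regions tiled by forced lozenges. Choose $W=\{1,\dots,b\}\cup\{a+b+1,\dots,a+b+c\}$, or with the roles of $b$ and $c$ swapped depending on orientation. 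The bottom cluster of $b$ dents forces a triangular region of horizontal lozenges, which are the horizontal lozenges of a size-$b$ triangle. By the symmetric argument, the top cluster of size $c$ forces only non-horizontal lozenges, or vice versa. I would check with the picture which of these forced parts carries horizontal lozenges and fix the orientation so that exactly one side does. Equivalently, it should reproduce the prefactor $\prod_{i=1}^{b}\prod_{j=1}^{c}\tfrac{Xq^{k+i-j}+Yq^{-(k+i-j)}}{2}$, which suggests the forced horizontal lozenges form a $b\times c$ parallelogram array. I therefore expect the right choice is the one making $\M_{(q,X,Y,k')}(S_{x,y}(\overline{W}))=(\text{forced weights})\cdot \M_{(q,X,Y,k)}(H_{a,b,c})$, with $k'$ the shifted coordinate of the trapezoid's left bisector.

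Then I would evaluate the right side of \eqref{efa} for this $W$. The factor $\Delta_{1,1,(q,X,Y,k')}(W)$ splits into three parts: pairs within the bottom cluster, pairs within the top cluster, and cross pairs. Pairs within a cluster contribute $\langle w_2-w_1\rangle_q$ products equal to $\prod_{i=1}^{b-1}\langle i\rangle_q!$ and $\prod_{i=1}^{c-1}\langle i\rangle_q!$, times $X,Y$ factors. Cross pairs have differences ranging over a $b\times c$ grid shifted by $a$, so they contribute $\prod_{i,j}\langle a+i+j-1\rangle_q$ times $X,Y$ factors. The $q$-part combined with the denominator $\prod_{i=1}^{b+c-1}\langle i\rangle_q!$ should give the $q$-MacMahon ratio $\frac{H_q(a)H_q(b)H_q(c)H_q(a+b+c)}{H_q(a+b)H_q(b+c)H_q(c+a)}$. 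This is a routine telescoping identity of the type used to derive \eqref{eaa} from $\prod\frac{\langle i+j+k-1\rangle_q}{\langle i+j+k-2\rangle_q}$.

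The main obstacle is the bookkeeping of the $X,Y$-factors. The $X,Y$-factors from the within-cluster pairs and the cross pairs must be matched against the weights of the forced lozenges, so that after dividing out the forced ones exactly $\prod_{i=1}^{b}\prod_{j=1}^{c}\tfrac{Xq^{k+i-j}+Yq^{-(k+i-j)}}{2}$ remains. This requires getting the coordinate shift between $k$ and $k'$ right. I would first test $a=0$, where $H_{0,b,c}$ has a unique tiling made of a $b\times c$ array of horizontal lozenges, to pin down the shift. I would then verify the general matching of exponents $(w_1+w_2+k')-(a+b+c+1)$ against the centers of the forced lozenges.
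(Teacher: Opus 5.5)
Your overall route is the paper's: realize $H_{a,b,c}$ as $S_{a,b+c}(\overline{W})$ with $W=[b]\cup([a+b+c]\setminus[a+b])$ minus forced lozenges, apply Theorem~\ref{tfa}, and divide out the forced weights. Your evaluation of the $q$-part is also correct: within-cluster differences give $H_q(b)H_q(c)$, cross differences give $\langle a+i+j-1\rangle_q$, and dividing by $H_q(b+c)$ yields the $q$-MacMahon ratio.

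However, your description of the forced lozenges is wrong, and this is exactly the step you call the main obstacle. As you state it, the bookkeeping cannot close. Neither corner forces non-horizontal lozenges, and no choice of orientation makes ``exactly one side'' carry horizontal ones.

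Each piece of $S_{a,b+c}$ outside $H_{a,b,c}$ is a left-pointing triangle: side $c$ at the top right, side $b$ at the bottom right. Its whole column of left-pointing unit triangles along the right boundary consists of dents. In the rightmost strip of such a corner, every right-pointing unit triangle has both slanted neighbours removed. It must therefore be covered by a horizontal lozenge across its vertical edge. These lozenges use up all of the corner's left-pointing triangles in the next strip, and the argument propagates to the apex.

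Hence both corners are tiled entirely by forced horizontal lozenges, $\binom{b}{2}+\binom{c}{2}$ of them. Use the labels $z=w-\frac{a+b+c+1}{2}$. The forced lozenge on the $r$-th vertical lattice line to the left of the right side, lying midway between the dents labelled $z$ and $z+r$, has $i$-coordinate $k+z+(z+r)$. So the forced weights are precisely the $X,Y$-factors of the within-cluster pairs in $\Delta_{1,1,(q,X,Y,k)}(W)$. This is the paper's \eqref{efe}.

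Consequently, the prefactor $\prod_{i=1}^{b}\prod_{j=1}^{c}\frac{Xq^{k+i-j}+Yq^{-(k+i-j)}}{2}$ does not come from a $b\times c$ array of forced lozenges. A count already rules that out: $\binom{b+c}{2}=\binom{b}{2}+\binom{c}{2}+bc$, and $bc$ factors must survive. The prefactor is the product of the $bc$ cross-pair factors, which remain after the division. For $w_1=i\in[b]$ and $w_2=a+b+j$ with $j\in[c]$, the exponent is $(w_1+w_2+k)-(a+b+c+1)=k+i+j-c-1$. The reindexing $j\mapsto c+1-j$ turns this into $k+i-j$.

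There is also no shift $k'\neq k$ to determine. The trapezoid and the hexagon share their left side, so the line $i=k$ is the same for both. Your $a=0$ check is valid, but the $b\times c$ array of horizontal lozenges there is $H_{0,b,c}$ itself, not a forced corner; that seems to be the source of the confusion. With these corrections, your argument coincides with the paper's proof.
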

\begin{proof}
    \begin{figure}
        \centering
        \includegraphics[width=0.8\textwidth]{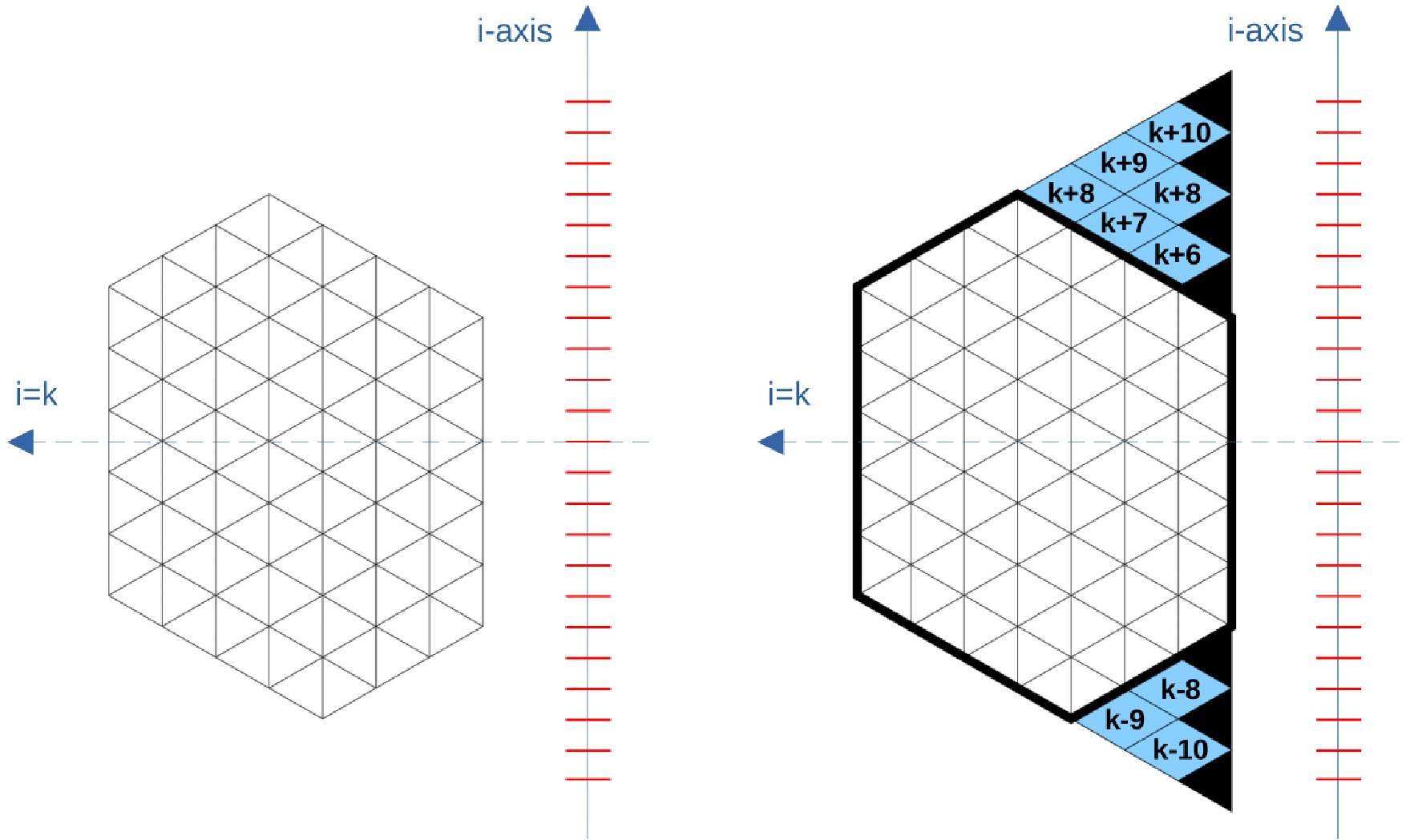}
        \caption{$H(5,3,4)$ (left) and $S_{5,7}(1,2,3,9,10,11,12)$ (right) on $(i,j)$-coordinate planes.}
        \label{ffd}
    \end{figure}

    First, note that the region $H_{a,b,c}$ has the same number of lozenge tilings as the region $S_{a,b+c}\Big(\overline{[b]\cup\big([a+b+c]\setminus[a+b]\big)}\Big)$. This is because, after removing every forced lozenge from the corners of the latter region, one obtains the hexagonal region $H_{a,b,c}$ (compare the two pictures in Figure \ref{ffd}). However, since all the forced lozenges are horizontal, they have weights different from $1$ and thus the tiling generating functions of the two regions differ by the product of weights of all forced lozenges. After keeping track of their weights, we get
    \begin{equation}\label{efe}
    \begin{aligned}
        &\frac{\M_{(q,X,Y,k)}\Big(S_{a,b+c}\Big(\overline{[b]\cup\big([a+b+c]\setminus[a+b]\big)}\Big)\Big)}{\M_{(q,X,Y,k)}(H_{a,b,c})}\\
        =&\prod_{b_{1}<b_{2}}\frac{Xq^{b_{1}+b_{2}+k}+Yq^{-(b_{1}+b_{2}+k)}}{2}\cdot\prod_{c_{1}<c_{2}}\frac{Xq^{c_{1}+c_{2}+k}+Yq^{-(c_{1}+c_{2}+k)}}{2},
    \end{aligned}
    \end{equation}
    where the first product on the right side of \eqref{efe} is taken over $b_{1},b_{2}\in[b]-\frac{a+b+c+1}{2}$ and the second product is taken over $c_{1},c_{2}\in[a+b+c]\setminus[a+b]-\frac{a+b+c+1}{2}$. Combining \eqref{efa}, \eqref{efe}, and applying algebraic manipulation leads us to \eqref{efd}. This completes the proof.
\end{proof}
\end{document}